\newcommand{\Rmnum}[1]{\expandafter\@slowromancap\romannumeral #1@}
\theoremstyle{plain}
\newtheorem{hyp}{Hypothesis}
\newtheorem{definition}{Definition}[section]
\newtheorem{claim}{Claim}
\theoremstyle{definition}
\newtheorem{theorem}{Theorem}[section]
\newtheorem{corollary}[theorem]{Corollary}
\newtheorem{lemma}[theorem]{Lemma}
\theoremstyle{remark}
\newtheorem{remark}{Remark}
\numberwithin{equation}{section}
\def\rn{\mathbb{R}^n}
\author[J. Tan]{Jiawei Tan}
\address{Jiawei Tan:
	School of Mathematics and Computing Science \\
	Guilin University of Electronic Technology \\
	Center for Applied Mathematics of Guangxi (GUET) \\
	Guangxi Colleges and Universities Key Laboratory of Data Analysis and Computation \\
	Guilin 541004 \\
	People's Republic of China}
\email{jwtan@guet.edu.cn}
\author[J. Wang]{Jiahui Wang}
\address{Jiahui Wang:
	School of Mathematical Sciences \\
	Beijing Normal University \\
	Laboratory of Mathematics and Complex Systems \\
	Ministry of Education \\
	Beijing 100875 \\
	People's Republic of China}
\email{wangjh0121@163.com}
\author[Q. Xue]{Qingying Xue}
\address{Qingying Xue:
	School of Mathematical Sciences \\
	Beijing Normal University \\
	Laboratory of Mathematics and Complex Systems \\
	Ministry of Education \\
	Beijing 100875 \\
	People's Republic of China}
\email{qyxue@bnu.edu.cn}
\keywords{multilinear operators, iterated commutators, Morrey-Banach function spaces \\
	\indent{{\it {2020 Mathematics Subject Classification.}}} 42B35, 46E30.}
\thanks{The first author is supported by the Science and Technology Project of Guangxi (Guike AD23023002), and the third author  was partly supported by the National Key R\&D Program of China (No. 2020YFA0712900) and NSFC (No. 12271041).
}
\date{\today}
\title[ ]
{\bf Boundedness of a class of multilinear operators and their iterated commutators on Morrey-Banach function spaces}
\begin{document}
\maketitle
\begin{abstract}
	This paper investigates the boundedness of a broad class of operators within the framework of generalized Morrey-Banach function spaces. This class includes multilinear operators such as multilinear 
$\omega$-Calderón-Zygmund operators, multilinear maximal singular integral operators, multilinear pseudo-differential operators, and multilinear square functions, as well as linear operators such as rough singular integral operators, nonintegral operators, and Stein's square functions. 
The boundedness of this class of operators and their commutators on Morrey-Banach spaces is established provided they satisfy either a pointwise sparse domination assumption or the $W_{r}$ property (Lerner, Lorist and Ombrosi, 
Math. Z. 2024), which significantly generalize the classical theory of Morrey spaces.	As an application, the boundedness of iterated commutators of this class of multilinear operators is extended to Morrey-Lorentz spaces and Morrey spaces with variable exponents. 
	
\end{abstract}

\section{Introduction}The primary objective of this paper is to develop a unified framework for establishing the boundedness of a broad class of multilinear operators and their associated commutators on Morrey-Banach function spaces. This framework encompasses, as special cases, Morrey-Lorentz spaces and Morrey spaces with variable exponents, thereby providing a comprehensive extension of classical results in the theory of Morrey-type spaces. We begin with a brief review of the research in the Morrey type function spaces.

\subsection{Morrey type spaces}{It will be  divided into three parts.}
\vspace{0.2cm}

\noindent\textbf{(1). Classical Morrey spaces.}
In the 1930s, Wiener's work on the limit of functions 
$f$  satisfying
$$\frac{1}{s^{1-\alpha}} \int_{0}^{s} |f(x)|^{p} dx, \quad \alpha\in (0,1), \; p=1 \; \text{or} \; p=2$$
led to the development of local Morrey spaces. It was later recognized that local Morrey spaces could be generalized to the high-dimensional case with the norm 
\begin{align}\label{m-1}
	\|f\|_{B^{p}} = \sup_{r>0}\frac{1}{|B(0,r)|^{\frac{1}{p}}} \|f\chi_{B(0,r)}\|_{L^{p}(\rn)}
\end{align}
Notice that the classical Morrey space is obtained by replacing 
$B(0,r)$ in \eqref{m-1}  with
$B(x,r)$ where $x\in \rn$
, in the following manner:

\begin{align}\label{morrey}
	L^{p,\lambda}(\mathbb{R}^n)\coloneqq\left\{f\in L_{\text{loc}}^p(\mathbb{R}^{n}): \|f\|_{L^{p,\lambda}(\mathbb{R}^n)} = \sup_{x\in \mathbb{R}^n, r>0} \frac{1}{r^{\lambda/p}} \|f\chi_{B(x,r)}\|_{L^p(\mathbb{R}^n)}<\infty\right\}
\end{align}
for $1\leq p<\infty$ and $0\leq \lambda \leq n$. This space was further developed by Morrey \cite{mor} in 1938 to address certain problems in quasi-elliptic partial differential equations. When $\lambda>n$, $L^{p,\lambda}(\rn) = \{0\}$ is trivial.
 Morrey spaces are expected to enjoy nicer properties than Lebesgue spaces, by the reason that  Morrey spaces generalize Lebesgue spaces by introducing a more refined control over the behavior of functions, especially in local settings.
Indeed, for $\lambda=0$ and $\lambda=n$, the Morrey spaces $L^{p,0}(\rn)$ and $L^{p,n}(\rn)$ coincide with the Lebesgue spaces $L^{p}(\rn)$ and $L^{\infty}(\rn)$, respectively.
Moreover, the Morrey spaces are complete in the sense of norm.
As is well known, the Morrey spaces have other equivalent formulations. For instance, we can express them as
$$ \|f\|_{M^{p,q}(\rn)} = \sup_{x\in \mathbb{R}^n, r>0} r^{\frac{n}{q}-\frac{n}{p}} \|f \chi_{B(x,r)}\|_{L^{p}(\rn)},$$
where $1\leq p\leq q\leq \infty$. In fact, by defining
$\lambda \equiv n\left(1-\frac{p}{q}\right),$
we obtain the equivalence 
 $\|f\|_{L^{p,\lambda}(\rn)} = \|f\|_{M_{p,q}(\rn)}$. In the rest of the paper, we mainly follow the notation in \eqref{morrey}.

Since 1938, many results established in Lebesgue spaces have been extended to Morrey spaces. In 1975, Adams \cite{ada} was the first to prove the boundedness of the Riesz potential on Morrey spaces. In 1987, Chiarenza and Frasca \cite{chi} showed that the Hardy-Littlewood maximal operator is bounded on any Morrey space $L^{p,\lambda}(\rn)$ with $1<p<\infty, 0<\lambda<n$.
Subsequently, it was demonstrated that the Calder\'on-Zygmund operators and $\omega$-Calder\'{o}n-Zygmund operators are both bounded linear operators on $L^{p,\lambda}(\rn)$ for $0\leq\lambda<n$. Moreover, it was shown by Olsen \cite{ols} that the following
H\"{o}lder's inequality holds in Morrey spaces.
$$\|fg\|_{L^{r,\nu}(\rn)} \leq \|f\|_{L^{p,\lambda}(\rn)} \|g\|_{L^{q,\mu}(\rn)},$$
where $f\in L^{p,\lambda}(\rn)$ and $g\in L^{q,\mu}(\rn)$ with $1\leq p,q<\infty, \frac{1}{p}+\frac{1}{q}\geq 1$ and
$\frac{1}{r} =\frac{1}{p}+\frac{1}{q},\quad \frac{\nu}{r} =\frac{\lambda}{p} +\frac{\mu}{q}.$
\vspace{0.2cm}

\noindent\textbf{(2).  Generalized Morrey spaces.}
In the past decade, significant efforts have been made to extend the study of Morrey spaces to more general classes of Morrey spaces. Specifically, $\|\cdot\|_{L^{p}(\rn)}$ in \eqref{morrey} is replaced by other more general norms. For instance, in 2012, Ragusa \cite{rag} defined the Morrey-Lorentz spaces, where $\|\cdot\|_{L^{p,q}(\rn)}$ takes the place of $\|\cdot\|_{L^{p}(\rn)}$, and showed some embedding properties. 
Nakai \cite{nak2} introduced such spaces, and gave basic properties of them and a necerassary and sufficient condition for the boundedness of the Hardy-Littlewood maximal operator from Morrey-Orlicz space to another. 
Unlike the definition given by Nakai, Sawano et al. \cite{sawa5} defined another Morrey-Orlicz norm 
and presented some inequalities for generalized fractional integral operators on such space are established. 
Consider Morrey spaces with variable exponents, Almeida, Hasanov and Samko \cite{alm} introduced the spaces $L^{p(\cdot),\lambda(\cdot)}(\rn)$ equipped with the norm 
$$\|f\|_{L^{p(\cdot),\lambda(\cdot)}(\rn)} = \inf\left\{\nu>0: \rho_{p(\cdot),\lambda(\cdot)}\left(\frac{f}{\nu}\right)\leq 1\right\}<\infty,$$
with the modular 
$\rho_{{p(\cdot),\lambda(\cdot)}}(f) \coloneqq \sup_{x\in \rn,~ r>0} \frac{1}{r^{\lambda(x)}} \|f\chi_{B(x,r)}\|_{L^{p(\cdot)}(\rn)},$
where $L^{p(\cdot)}(\rn)$ is given in Section \ref{pr+4}.
In the case of a bounded $\Omega$, several equivalent norms and the embedding theorems for such Morrey spaces under the assumption that $p(x)$ satisfies the $\log$-condition were also presented in \cite{alm}. 
When the underlying space is a homogeneous-type space $(X,\rho,\mu)$, Kokilashvili and Meskhi \cite{kok
} introduced the space $M_{p(\cdot)}^{q(\cdot)}$ with the norm 
$$\|f\|_{M_{p(\cdot)}^{q(\cdot)}} = \sup_{x\in X,~ r>0} \left(\mu(B(x,r))\right)^{\frac{1}{p(x)}-\frac{1}{q(x)}} \|f\chi_{B(x,r)}\|_{L^{q(\cdot)}(X)},$$
where $1<\inf_{X} q\leq q(\cdot) \leq p(\cdot) \leq \sup_{X} p <\infty$. Similar, in the case of a bounded $X$, several equivalent norms and the embedding theorems were also obtained. 
Read \cite{ho1} for more content about Morrey spaces with variable exponents. 
For more information about these spaces, we refer the readers to \cite{ho1, ayk, sawa5} and references therein. 
\vspace{0.2cm}

\noindent\textbf{(3). Morrey-Banach spaces.}
In 2017, Ho \cite{ho4} extended the extrapolation theory to Morrey-Banach spaces, which include Morrey-Lorentz spaces, Morrey-Orlicz spaces and Morrey spaces with variable exponents, thus deriving the John-Nirenberg inequalities on Morrey-Banach spaces and the characterizations of BMO in terms of Morrey-Banach spaces. However, the extrapolation theory in \cite{ho4} only serves the boundedness of the nonlinear commutator $Nf \coloneqq T(f \log |f|) - Tf \log|Tf|$ on some subspaces of the Morrey-Banach spaces, since the density of bounded functions with compact supports on Morrey-Banach spaces remains unknown, where $T: \mathscr{S}(\rn) \to \mathscr{S}'(\rn)$ is a Calder\'on-Zygmund operator
$$Tf(x)= \int_{\rn} K(x,y) f(y) dy,$$
with the kernel function $K$ satisfying the classical size condition and some smoothness conditions.
Later on, Ho \cite{ho3} refined the extrapolation theory and established the boundedness of the nonlinear commutator $Nf$ on Morrey-Banach spaces. 
In order to adapt to Morrey-Banach spaces, Ho \cite{ho2} generalized the definitions of classical Calder\'on-Zygmund operator and gave the boundedness of singular integrals and their commutators, and the weak type estimates of singular integral operators. 
More work about the Morrey-Banach space can be found in \cite{zha} and references therein.
\subsection{Multilinear operators}
It is well known that the multilinear operator theory can be traced back to the celebrated works of Coifman and Meyer \cite{coi4} on singular integral operators such as commutators and pseudo-differential operators. This theory plays a crucial role in harmonic analysis, providing not only essential techniques, such as compensated compactness, for the study of partial differential equations but also powerful tools for addressing long-standing open problems. For example, the prominent Kato square root problem is one of the challenges that can benefit from the application of multilinear operator theory.
The study of multilinear operator theory, usually involves decomposing multiple variables simultaneously, which is naturally much more complex than linear analyses. This complexity also endows it with more profound significance and value.

Since the beginning of this century, the multilinear operator theory has achieved remarkable progress. For instance, multilinear Calder\'{o}n-Zygmund operators were systematically studied by Grafakos and Li \cite{gra1}, Grafakos and Torres \cite{ gra3}. Tao, Vargas, and Vega \cite{tao} employed bilinear methods to investigate multipliers and applied these techniques to the resolution of the restriction conjecture and the Kakeya conjecture-two of the four major open problems in Harmonic analysis.  Moreover, through the Fourier restriction method, Bourgain \cite{bou} transformed the well-posedness problem of nonlinear dispersive equations into the task of establishing bilinear or multilinear estimates in specific spacetime Banach spaces. This innovation has provided a new approach to studying the well-posedness of nonlinear evolution equations, particularly in the context of low-regularity problems.
The theory of multiple $A_{\vec{p}}$ -weights was  established by Lerner, P\'erez, et al. \cite{ler}. For more related works, we refer the readers to \cite{ler1,cao,bp+tan,che,li2} and the references therein.


Observe that significant progress has also been made in the study of operators which beyond the scope of multilinear Calderón-Zygmund theory, such as multilinear pseudo-differential operators \cite{cao}, Calder\'{o}n commutators \cite{DL2019} and Stein's square functions \cite{cho}. 
These operators enjoy several properties analogous to those of  the Calder\'{o}n-Zygmund operators,  such as the same $L^p$ boundedness, endpoint estimate, and sparse domination. In particular, Sparse domination provides a powerful framework for obtaining quantitative weighted estimates of operators.
In 2008, Beznosova \cite{bez} proved the $A_{2}$ conjecture for the dyadic paraproduct, which, in combination with Hyt\"onen’s dyadic representation theorem, leads to Hyt\"onen’s proof of the full $A_{2}$ conjecture in \cite{hyt2}. 
Until 2016, it was illustrated that a Calder\'on-Zygmund operator $T$ can be dominated pointwise by a finite number of sparse operators $\mathcal{T}_{\mathcal{S}}$ in \cite{con}, that is,
$$|Tf(x)|\leq c_{n,T} \sum_{j=1}^{3^{n}} \mathcal{T}_{\mathcal{S}_{j}}f(x),$$
where
$\mathcal{T}_{\mathcal{S}}f(x) = \sum_{Q\in \mathcal{S}} \langle |f| \rangle_{Q} \chi_{Q}(x),$
$\langle f\rangle_{Q}\coloneqq \frac{1}{|Q|} \int_{Q} f(x)dx$ and $\mathcal{S}$ is a sparse family of cubes from $\rn$, which is introduced in Section \ref{spare}.
By \cite{ler1}, the sparse operator and its adjoint adapted to the commutator with locally integrable function $b$ are defined by
\begin{align*}
	\mathcal{T}_{\mathcal{S},b}f(x) \coloneqq \sum_{Q\in \mathcal{S}} |b(x)-\langle b\rangle_{Q}| \langle|f|\rangle_{Q} \chi_{Q}(x),\quad
	\mathcal{T}_{\mathcal{S},b}^{\ast}f(x) \coloneqq \sum_{Q\in \mathcal{S}} \langle |b(x)-\langle b\rangle_{Q}| |f|\rangle_{Q} \chi_{Q}(x).
\end{align*}
\subsection{Motivation}Our motivation lies in two main aspects:
\begin{enumerate}
\item [{\bf $\bullet$}] It is well known that many operators, both convolutional and non-convolutional, enjoy certain kind of sparse domination. This includes non-convolutional operators such as multilinear 
$\omega$-Calderón-Zygmund operators, multilinear maximal singular integral operators, multilinear pseudo-differential operators, and multilinear square functions, among others. Given certain assumptions of sparse domination, it is natural to explore how the common features of these multilinear operators can be extracted and analyzed in order to establish their boundedness on various product function spaces.

In \cite{tan}, the first and third authors provided a partial answer  to this problem by studying the boundedness of multilinear operators and their commutators in rearrangement-invariant Banach spaces. These spaces form a large class of function spaces, including Lorentz and Orlicz spaces as notable examples. However, it is important to note that there is another class of spaces, namely Morrey spaces, which cannot be contained within rearrangement-invariant Banach spaces. Moreover, the classical Morrey space
$L^{p,\lambda}(\rn)$ for $0<\lambda<n$ are not even Banach spaces (\!\!\cite[Page 665]{sawa6}), nor are they quasi-Banach spaces. As a result, the study of the properties of multilinear operators and their iterated commutators in Morrey-type spaces has become a topic of great interest. Our first motivation is to  establish a unified method for the boundedness of a class of multilinear operators and their iterated commutators on Morrey-type spaces.
\item [{\bf $\bullet$}] 
In general, the pointwise sparse domination  usually requires the weak type $(1,1)$ for the grand maximal operator, which may be as large as the maximal truncation operators of the corresponding operators. 
However, whether the maximal truncation operators associated with many operators, such as rough singular integral operators with a kernel belongs to $L^\infty$, are of weak type $(1,1)$ remains an open question up to now \cite{ler10}.
Nevertheless, the remarkable works of Conde-Alonso et al. \cite{con1} and Lerner et al. \cite{ler9} indicate that pointwise sparse domination can be generalized to bilinear form sparse domination. The second motivation of this paper is to investigate whether the iterated commutators of rough singular integral operators exhibit boundedness on Morrey-Banach spaces and whether the bilinear form of sparse domination can yield analogous results.

\end{enumerate}
\subsection{Basic assumptions}

This paper aims to explore the boundedness of a class of multilinear operators on Morrey-Banach spaces, which is a very broad class of function spaces involving the classical Morrey spaces and Morrey-Lorentz spaces. 
We also consider the boundedness of rough singular integral operators on Morrey-Banach spaces. 

Before presenting our main results, we first introduce the necessary definitions and state the basic assumptions. Let $\mathcal{T}$ be a $m$-linear (or sublinear) operator from $\mathscr{X}_1 \times \cdots \times$ $\mathscr{X}_m$ into $\mathscr{Y}$, where $\mathscr{X}_1, \ldots, \mathscr{X}_m$ are some normed spaces and $\mathscr{Y}$ is a quasi-normed space. In our following statements, $\mathscr{X}_1, \ldots, \mathscr{X}_m$ and $\mathscr{Y}$ will be appropriately Lebesgue spaces.
\begin{definition}[\textbf {Iterated commutators of multilinear operators}]\label{def6.2}
	Given $m\in \mathbb{N}^{+}, l\leq m$,~$\vec{f} \coloneqq  \left(f_1, \ldots, f_m\right) \in \mathscr{X}_1 \times \cdots \times \mathscr{X}_m$, $\vec{b} \coloneqq  \left(b_{i_1}, \ldots, b_{i_l}\right)$ of measurable functions with $\{i_1,\ldots,i_l\}\subseteq \{1,\ldots,m\}$. The $m$-linear iterated commutators of $\mathcal{T}$ is given by
	$$
	\mathcal{T}_{\vec{b}}(\vec{f})(x) \coloneqq  \left[b_{i_1}, \cdots \left[b_{i_l}, \mathcal{T}\right]_{i_l} \cdots \right]_{i_1} (f_1, \dots, f_m)(x),
	$$
	where $[b_{i_l}, \mathcal{T}]_{i_l} (f_1, \dots, f_m) = b_{i_l} \mathcal{T}(f_1,\dots,f_m) - \mathcal{T}(f_1,\dots, b_{i_l} f_{i_l}, \dots, f_m)$. In particular, if $\mathcal{T}$ is an $m$-linear operator with a kernel representation of the form
	$$
	\mathcal{T}(\vec{f})(x) = \int_{\mathbb{R}^{nm}} K(x,\vec{y}) f_1(y_1) \cdots f_m(y_m) d\vec{y},
	$$
	where $d\vec{y} = dy_1 \cdots dy_m$, then $\mathcal{T}_{\vec{b}}$ can be represented as
	\begin{align}\label{ie+commutator}
		\mathcal{T}_{\vec{b}}(\vec{f})(x) = \int_{\mathbb{R}^{nm}} \prod_{s=1}^{l} (b_{i_s}(x) - b_{i_s}(y_{i_s})) K(x,\vec{y}) \prod_{s=1}^m f_s(y_s) d\vec{y}.
	\end{align}
\end{definition}
It is worth noting that the weighted strong and weak type endpoint estimates for the iterated commutators of the multilinear Calder\'{o}n-Zygmund operators in Definition \ref{def6.2} were established by P\'{e}rez et al. \cite{per2}. One can refer to \cite{cao,ler1} for more information about commutators.

We will consider two hypotheses that this class of operators need to satisfy. To introduce our first hypothesis, 
let $\mathcal{T}(b,f,Q,\gamma) $ be defined by
$$\mathcal{T}(b,f,Q,\gamma) \coloneqq  \begin{cases}
	|b-{\langle b\rangle}_{Q}|{\langle |f |\rangle}_{Q}, & \gamma=1, \\
	{\langle |(b-{\langle b \rangle}_{Q}) f|\rangle}_{Q}, &\gamma=2.
\end{cases}$$ 
The following hypothesis is crucial to our forthcoming discussion of $\mathcal{T}_{\vec{b}}$. 
\begin{hyp}[\textbf {Pointwise sparse domination}]\label{hyp}
	Let $I \coloneqq \{i_1,\ldots,i_l\}\subseteq \{1,\ldots,m\}$ and $\vec{b}=(b_{i_1}, \ldots,$ $b_{i_l})$ be locally integrable functions defined on $\mathbb{R}^n$. For an $m$-linear operator $\mathcal{T}$,  $\mathcal{T}_{\vec{b}}$ is its iterated commutator given by \eqref{ie+commutator}. Suppose that for any bounded functions $\vec{f}=\left(f_1, \ldots, f_m\right)$ with compact support, there exist $3^n$ sparse collections $\{\mathcal{S}_j\}_{j=1}^{3^n}$ such that for almost everywhere $x\in \mathbb{R}^n$, 
\begin{equation}\label{hyp2}
	|\mathcal{T}_{\vec{b}}(\vec{f})(x)| \leq C\sum_{j=1}^{3^n} \sum_{\vec{\gamma} \in \{1,2\}^l}\sum_{Q \in \mathcal{S}_j} \left(\prod_{s=1}^l \mathcal{T}(b_{i_s}, f_{i_s}, Q, \gamma_{i_s})\right) \left(\prod_{s \notin I} \left\langle\left|f_s\right|\right\rangle_Q\right) \chi_Q(x).
	\end{equation}
\end{hyp}
\begin{remark}
	When $I=\emptyset$,  inequality (\ref{hyp2}) in the above Hypothesis can be rewritten as
	$$
	|\mathcal{T}(\vec{f})(x)| \leq C\sum_{j=1}^{3^n} \sum_{Q \in \mathcal{S}_j}\prod_{s =1 }^m\left\langle\left|f_s\right|\right\rangle_Q\chi_Q(x), \qquad \hbox {\ for a.e. } x\in \mathbb{R}^n.
	$$
	It was known that the estimate in the form of Hypothesis \ref{hyp} holds for many operators, such as  multilinear Calder\'{o}n-Zygmund operators \cite[Theorem 1.4]{dam}, multilinear pseudo-differential operators \cite[Proposition 4.1]{cao}, etc. Therefore, all the results obtained in this paper are valid not only for the commutators of this class of operators but also for the operators themselves. However, for simplicity, the subsequent theorems will only be stated and demonstrated for the commutators.
\end{remark}

We proceed to formulate the second hypothesis, which applies to more general rough operators.
For $1\leq s\leq \infty$, let $T$ be a sublinear operator, and define the corresponding sharp grand maximal truncation operator by
$$M_{T,s}^{\sharp}f(x) \coloneqq \sup_{Q \owns x} \text{osc}_{s} \left(T(f\chi_{\rn\backslash{3Q}}); Q\right), \quad x\in \rn,$$
where  the supremum is taken over all $Q$ containing $x$ and
$$\text{osc}_{s}(f; Q)\coloneqq \left(\frac{1}{|Q|^2} \int_{Q\times Q} |f(x')-f(x'')|^{s} dx' dx''\right)^{\frac{1}{s}}.$$

Let $\varphi_{T,r}(\lambda)\coloneqq \lambda^{-\frac{1}{r}} \|T\|_{L^{r}(\rn)\to L^{r,\infty}(\rn)}$ for $ \lambda\in (0,1).$
The following hypothesis, taken from \cite{ler8}, describes the 
the $W_{r}$ property of $T$. 
\begin{hyp}[\textbf {$W_{r}$ property}]\label{hyp-new}
	Let $r\in [1,\infty)$.  An operator $T$ is said to satisfy the $W_{r}$ property, if there exists a nonincreasing function $\varphi_{T,r}: (0,1)\to [0,\infty)$ such that for any cube $Q$ and $f\chi_{Q} \in L^{r}(\rn)$, 
	$$\left|\left\{x\in Q: |T(f\chi_{Q})(x)| > \varphi_{T,r}(\lambda) \langle |f|\rangle_{r,Q}\right\}\right|
	\leq \lambda |Q|, \quad \lambda\in (0,1).$$
\end{hyp}
\vspace{0.01cm}
\subsection{Main results}
We first recall some necessary notations and concepts. Throughout this paper, for a locally integrable function $b$, $\|\vec{b}\|_{\text{BMO}}$ denotes the $m$-fold product of $\|b_i\|_{\text{BMO}}, i=1,\dots,m$, where $\|b_{i}\|_{\text{BMO}}\coloneqq\sup_{Q} \frac{1}{|Q|} \int_{Q}\left|b_{i}(x)-\langle b_{i} \rangle_Q\right|dx$.
Given a measurable set $E$, we denote $w(E)=\int_E w(x)dx$ and simply denote $|E|$ when $w=1$. 
Let $B(x,r)=\{z\in \mathbb{R}^n: |x-z|<r\}$ denote the open ball with center $x \in \mathbb{R}^n$ and radius $r>0$. We write $\mathcal{M}(\mathbb{R}^n,wdx)$ for the space of Lebesgue measurable functions on $(\mathbb{R}^n, wdx)$. The notation $\mathcal{M}(\rn)$ is reserved for $\mathcal{M}(\rn,dx)$.
We recall the definition of Morrey-Banach spaces from \cite[Definition 2.4]{ho2} as follows.
\begin{definition}[Morrey-Banach space]\label{def-mb}
	Fixed any Banach function space $X$ (see Section \ref{m-b} for the definition), the Morrey-Banach function space is defined by
	$$
	M_{X}^{u} = \Big\{ f \in \mathcal{M}(\mathbb{R}^n): \|f\|_{M_{X}^{u}} \coloneqq \sup_{x\in \mathbb{R}^n, r>0} \frac{1}{u(x,r)} \left\|f \chi_{B(x,r)}\right\|_{X} < \infty\Big\},
	$$
	with $u(x,r): \mathbb{R}^n \times (0,+\infty) \to (0,+\infty)$ be a Lebesgue measurable function.
\end{definition}

For any $0\leq \alpha < \infty$, we introduce the $\mathbb{W}_X^\alpha$ class in \cite{ho3} with some minor modifications.

\begin{definition}[$\mathbb{W}_X^\alpha$ class]\label{def-wxalpha}
	Let $X$ be a Banach function space. We say that a Lebesgue measurable function $u(x,r): \mathbb{R}^n \times (0,\infty) \to (0,\infty)$ belongs to $\mathbb{W}_X^\alpha$ if there exists a constant $C>0$ such that for any $x,x_1,x_2\in \mathbb{R}^n, r,r_1,r_2>0$ and $0\leq \alpha<\infty$, $u$ satisfies
	\begin{enumerate}
		\item $\frac{\|\chi_{B(x_1,r_1)}\|_{X}}{u(x_1,r_1)} \leq C \frac{\|\chi_{B(x_2,r_2)}\|_{X}}{u(x_2,r_2)}$, \quad if $u(x_1,r_1) \leq u(x_2,r_2)$,
		\item $\sum_{j=0}^{\infty} 2^{(j+1)\alpha} \frac{\|\chi_{B(x,r)}\|_{X}}{\|\chi_{B(x,2^{j+1}r)}\|_{X}} u(x,2^{j+1}r) \leq C u(x,r)$.
	\end{enumerate}		
\end{definition}

\begin{remark}
	Firstly, notice that $\mathbb{W}_{X}^{\alpha_2} \subseteq \mathbb{W}_{X}^{\alpha_1}$ for any $0\leq \alpha_1\leq \alpha_2<\infty$. When $\alpha=1$, $\mathbb{W}_{X}^1$ was studied in \cite{zha}. Secondly, item $(1)$ implies that $M_{X}^{u}$ is non-trival, and item $(2)$ ensures that the Hardy-Littlewood maximal operator $M$ is bounded on $M_{X}^{u}$.
\end{remark}

\subsubsection{\textbf{Results under the Hypothesis \ref{hyp}}}

We now present our first two main theorems. The first is the multilinear Coifman-Fefferman inequality in Morrey-Banach spaces.
\begin{theorem}\label{thm1}
	Let $I \coloneqq \{i_1,\dots,i_l\} = \{1,\dots,l\} \subseteq \{1,\dots,m\}$, $X$ be a Banach function space and $u: \mathbb{R}^n \times (0,+\infty) \to (0,+\infty)$ be Lebesgue measurable. Suppose that $\vec{b} \in \mathrm{BMO}^l$ and $\mathcal{T}_{\vec{b}}$ meets Hypothesis \ref{hyp}. If $0<p<\infty$, $X' \in \mathbb{M}$ and $u\in \mathbb{W}_{X}^0$, then there exists a constant $C>0$ such that for any $\vec{f}\in \mathcal{M}^m$ and $(\mathscr{M}_{L(\log L)} (\vec{f}))^p \in M_{X}^{u}$, $|\mathcal{T}_{\vec{b}}(\vec{f})|^p \in M_{X}^{u}$ and
	$$
	\left\| (\mathcal{T}_{\vec{b}} (\vec{f}))^p \right\|_{M_{X}^{u}} \leq  C\|\vec{b}\|_{\text{BMO}}^p \left\|M\right\|_{\mathfrak{B}_{X',u}}^{pl+\max\{2,p\}} \left\|\left(\mathscr{M}_{L(\log L)} (\vec{f}) \right)^p \right\|_{M_{X}^u},
	$$
	where $X' \in \mathbb{M}$ represents that the Hardy-Littlewood maximal operator $M$ is bounded on $X'$, as defined in Section \ref{m-b}.
\end{theorem}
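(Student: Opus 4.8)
The plan is to reduce $\mathcal{T}_{\vec b}$ to its sparse model through Hypothesis \ref{hyp}, to strip off the BMO functions using generalized Hölder in Orlicz spaces together with the John--Nirenberg inequality, and finally to control the resulting $L(\log L)$-type sparse form on $M_X^u$ via the standard local--global splitting of a Morrey ball, for which the two conditions defining $\mathbb W_X^0$ are exactly designed. For the first step: by Hypothesis \ref{hyp}, for bounded compactly supported $\vec f$ one has $|\mathcal T_{\vec b}(\vec f)|\le C\sum_{j=1}^{3^n}\sum_{\vec\gamma\in\{1,2\}^l}\mathcal A_{j,\vec\gamma}(\vec f)$, where $\mathcal A_{j,\vec\gamma}(\vec f)=\sum_{Q\in\mathcal S_j}\bigl(\prod_{s=1}^l\mathcal T(b_{i_s},f_{i_s},Q,\gamma_{i_s})\bigr)\bigl(\prod_{s\notin I}\langle|f_s|\rangle_Q\bigr)\chi_Q$. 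Since $t\mapsto t^p$ is quasi-subadditive and the index set is finite, $\|(\mathcal T_{\vec b}(\vec f))^p\|_{M_X^u}\lesssim\max_{j,\vec\gamma}\|(\mathcal A_{j,\vec\gamma}(\vec f))^p\|_{M_X^u}$, so it suffices to estimate a single $\mathcal A_{\mathcal S,\vec\gamma}$. A monotone-truncation/limiting argument, using the assumed finiteness of $\|(\mathscr M_{L(\log L)}(\vec f))^p\|_{M_X^u}$, removes the boundedness and compact-support hypotheses on $\vec f$ and simultaneously gives $|\mathcal T_{\vec b}(\vec f)|^p\in M_X^u$.

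\emph{Detaching the BMO functions.} For a coordinate with $\gamma_{i_s}=2$, generalized Hölder in Orlicz spaces and the John--Nirenberg bound $\|b_{i_s}-\langle b_{i_s}\rangle_Q\|_{\exp L,Q}\lesssim\|b_{i_s}\|_{\mathrm{BMO}}$ give $\mathcal T(b_{i_s},f_{i_s},Q,2)\lesssim\|b_{i_s}\|_{\mathrm{BMO}}\,\|f_{i_s}\|_{L\log L,Q}$. For a coordinate with $\gamma_{i_s}=1$ the factor $|b_{i_s}(x)-\langle b_{i_s}\rangle_Q|$ is not an average, and here I would run the principal-cube (stopping-time) decomposition of $\mathcal S$ relative to each such $b_{i_s}$, replacing $|b_{i_s}(x)-\langle b_{i_s}\rangle_Q|$ by $|b_{i_s}(x)-\langle b_{i_s}\rangle_F|+C\|b_{i_s}\|_{\mathrm{BMO}}$ for the principal ancestor $F\supseteq Q$ and resumming the $x$-dependent part against the remaining sparse operator. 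After at most $l$ such decompositions one is left with $O(1)$ pieces, each dominated pointwise by $C\|\vec b\|_{\mathrm{BMO}}$ times an $L(\log L)$ sparse operator $\mathcal A^{L\log L}_{\mathcal S'}(\vec f)=\sum_{Q\in\mathcal S'}\bigl(\prod_{s=1}^m\|f_s\|_{L\log L,Q}\bigr)\chi_Q$ over a sparse family $\mathcal S'$; the $l$ stopping-time steps are what produce the factor $\|M\|_{\mathfrak B_{X',u}}^{pl}$ in the final bound.

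\emph{The $L(\log L)$ sparse form on $M_X^u$, and conclusion.} It remains to prove $\|(\mathcal A^{L\log L}_{\mathcal S'}(\vec f))^p\|_{M_X^u}\lesssim\|M\|_{\mathfrak B_{X',u}}^{\max\{2,p\}}\|(\mathscr M_{L(\log L)}(\vec f))^p\|_{M_X^u}$. Fix a ball $B=B(x_0,r_0)$ and split $\mathcal S'$ at the scale $r_0$. The cubes $Q\in\mathcal S'$ with $Q\cap B\neq\emptyset$ and side length $\le r_0$ all satisfy $Q\subseteq cB$, so their contribution, restricted to $B$, is controlled on the Banach function space $X$ by the boundedness there of the Orlicz maximal operator and of the associated sparse operator --- available from $X'\in\mathbb M$ (Section \ref{m-b}) --- together with the pointwise inequality $\prod_s\|f_s\|_{L\log L,Q}\le\mathscr M_{L(\log L)}(\vec f)(x)$ for $x\in Q$; condition (1) of Definition \ref{def-wxalpha}, combined with the off-diagonal decay of the Orlicz maximal function away from $cB$ (summed by condition (2)), then brings this below $Cu(x_0,r_0)\|(\mathscr M_{L(\log L)}(\vec f))^p\|_{M_X^u}$. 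For the cubes $Q\in\mathcal S'$ meeting $B$ with side length $>r_0$, there are $O(1)$ at each dyadic scale $2^kr_0$, each contained in $B(x_0,c2^kr_0)$ with $\prod_s\|f_s\|_{L\log L,Q}$ no larger than the average of $\mathscr M_{L(\log L)}(\vec f)$ over $B(x_0,c2^kr_0)$; estimating the $X$-norm of the $p$-th power of this far part by $\sum_{k\ge0}\frac{\|\chi_B\|_X}{\|\chi_{B(x_0,c2^kr_0)}\|_X}\|(\mathscr M_{L(\log L)}(\vec f))^p\chi_{B(x_0,c2^kr_0)}\|_X$ and invoking condition (2) of $\mathbb W_X^0$ (the case $\alpha=0$) sums the series to $\lesssim u(x_0,r_0)\|(\mathscr M_{L(\log L)}(\vec f))^p\|_{M_X^u}$. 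Adding the near and far parts, dividing by $u(x_0,r_0)$, and taking the supremum over all balls completes the proof.

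\emph{Main obstacle.} The technical heart is the second step: carrying out the principal-cube decomposition for the $\gamma_{i_s}=1$ factors with full quantitative control --- a bounded number of pieces, controlled generation gaps, and exactly the power $\|M\|_{\mathfrak B_{X',u}}^{pl}$ after passing to $p$-th powers --- while making sure that the genuine Orlicz maximal $\mathscr M_{L(\log L)}$, and not merely the $m$-linear average $\mathscr M$, ends up governing the right-hand side. A secondary but still delicate point is that $M_X^u$ carries no convenient duality, so the near-ball estimate in the last step must be run entirely on the Banach function space $X$ through its associate space $X'\in\mathbb M$, and the far-ball estimate must be arranged so that the geometric series closes under precisely the $\alpha=0$ instance of condition (2) of Definition \ref{def-wxalpha}.
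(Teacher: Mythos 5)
Your plan diverges from the paper's proof, and it has genuine gaps at its two load-bearing points. First, the ``near-ball'' step is not sound as stated: a sparse sum $\sum_{Q\in\mathcal S'}\bigl(\prod_s\|f_s\|_{L\log L,Q}\bigr)\chi_Q$ is \emph{not} pointwise dominated by $\mathscr M_{L(\log L)}(\vec f)$, since nested cubes of many scales overlap; so the pointwise inequality you invoke for $x\in Q$ does not control the local part, and what is really needed is a norm estimate for the \emph{$p$-th power} of an $L(\log L)$ sparse form on $X$ restricted to a ball. For $p=1$ one can get such an estimate by $X$--$X'$ duality and Carleson embedding (this is essentially what the paper does, but for Theorem \ref{thm2}), yet for $p>1$ (and for $0<p<1$) duality on $X$ cannot be applied to the $p$-th power of the sparse sum, and your sketch offers no substitute. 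This is exactly the difficulty the paper's proof of Theorem \ref{thm1} avoids by quoting the $A_\infty$-weighted Coifman--Fefferman inequality (Lemma \ref{yinli3}, valid for all $0<p<\infty$) and then extrapolating: one builds the Rubio de Francia weight $\mathcal Rh=\sum_k M^kh/(2^k\|M\|^k_{\mathfrak B_{X',u}})$ from a block $h\in\mathfrak b_{X',u}$, uses $[\mathcal Rh]_{A_1}\le 2\|M\|_{\mathfrak B_{X',u}}$, and concludes via the block duality \eqref{mxu<sup<mxu} and the H\"older inequality \eqref{holder-M-B}.

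Second, the constants you promise cannot be produced by the mechanisms you describe. A principal-cube (stopping-time) decomposition of the $\gamma_{i_s}=1$ factors yields dimensional constants (compare the paper's use of the sparse domination of $|b-\langle b\rangle_Q|$ from \cite{ler1} in the alternative proof of Theorem \ref{thm2}), not powers of $\|M\|_{\mathfrak B_{X',u}}$; likewise, running the ball-splitting ``entirely on $X$ through $X'\in\mathbb M$'' would generate powers of $\|M\|_{X'\to X'}$, a different quantity from the block-space norm appearing in the statement. In the paper, the powers $\|M\|_{\mathfrak B_{X',u}}^{pl+\max\{2,p\}}$ arise precisely because the $A_\infty$ constant in Lemma \ref{yinli3} is evaluated at the weight $\mathcal Rh$ built from blocks (and, in the sparse-form alternative for Theorem \ref{thm2}, because $M_{L(\log L)^{l}}h\simeq M^{l+1}h$ is applied to the block function $h$). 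Relatedly, your closing claim that $M_X^u$ ``carries no convenient duality'' is mistaken: the two-sided block characterization \eqref{mxu<sup<mxu} is exactly the duality the paper uses, and it is the engine that turns weighted or dual-block estimates into $M_X^u$ bounds. Without either that duality or the weighted inequality of Lemma \ref{yinli3}, your local--global splitting does not close, and in particular the exponent $\max\{2,p\}$ has no source in your argument.
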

\begin{remark}
	As applications of Theorem \ref{thm1}, the multilinear Coifman-Fefferman inequalities associated with the multilinear maximal singular integral operators and the multilinear Bochner-Riesz square functions in Morrey-Banach spaces are established in Theorems \ref{thm-maximal} and \ref{thm-square}, respectively. 
\end{remark}
\begin{remark}
It should be noted that Theorem \ref{thm1} is the unweighted version of \cite[Remark 1.7]{tan}, 
 but the Banach function space $X$ in Theorem \ref{thm1} is more general than it in \cite[Remark 1.7]{tan}, as $M_{X}^{u}=X$ for $u\equiv 1$. Furthermore, the Coifman-Fefferman inequalities for multilinear iterated commutators in general function spaces are new.
\end{remark}

The second main theorem is the boundedness of the iterated commutators of the multilinear operator $\mathcal{T}$ on Morrey-Banach spaces.
\begin{theorem}\label{thm2}
	Let $I \coloneqq \{i_1,\dots,i_l\} = \{1,\dots,l\} \subseteq \{1,\dots,m\}$, $X, X_1, \dots, X_m$ be Banach function spaces with $\|\chi_B\|_{X} \leq C\prod_{i=1}^m \|\chi_B\|_{X_i}$ for any ball $B$, and $X' \in \mathbb{M}$, $(X_i^p)'\in \mathbb{M}$. 
	Suppose that the multilinear Hardy-Littlewood maximal operator $\mathscr{M}(\vec{f})$ is bounded from $X_1^p \times \cdots \times X_m^p$ to $X^p$ with $1\leq p<\infty$, $\vec{b} \in \mathrm{BMO}^l$ and $\mathcal{T}_{\vec{b}}$ satisfies Hypothesis \ref{hyp}. Then for $u=\prod_{i=1}^m u_i$ with $u_i^{\frac{1}{p}} \in \mathbb{W}_{X_i^p}^0, i=1,\dots,m$, and $u^{\frac{1}{p}} \in \mathbb{W}_{X^p}^0$, the following statements are true.
	\begin{enumerate}
		\item when $p=1$ and $X_i \in \mathbb{M}, i=1,\dots,m$, 
		$$\left\| \mathcal{T}_{\vec{b}} (\vec{f}) \right\|_{M_{X}^{u}}
		\lesssim \|\vec{b}\|_{\text{BMO}} 
		\left\|M\right\|_{\mathfrak{B}_{X',u}}^{l+2} 
		\left\|\mathscr{M}\right\|_{M_{X_1}^{u_1} \times \cdots \times M_{X_m}^{u_m} \to M_{X}^{u}}
		\prod_{i=1}^m \|M\|_{M_{X_i}^{u_i} \to M_{X_i}^{u_i}} \|f_i\|_{M_{X_i}^{u_i}},$$
		where $\mathscr{M}$ is defined in Section \ref{weight}.
		
		\item when $p>1$ and $X_i' \in \mathbb{M}, i=1,\dots,m$,
		$$\left\| (\mathcal{T}_{\vec{b}} (\vec{f}))^p \right\|_{M_{X}^{u}}
		\lesssim \|\vec{b}\|_{\text{BMO}}^p
		\left\|M\right\|_{\mathfrak{B}_{X',u}}^{pl+\max\{2,p\}} 
		\left\|\mathscr{M}\right\|_{M_{X_1^p}^{u_1^{1/p}} \times \cdots \times M_{X_m^p}^{u_m^{1/p}} \to M_{X^p}^{u^{1/p}}}^p
		\prod_{i=1}^m \||f_i|^p\|_{M_{X_i}^{u_i}}.$$
	\end{enumerate}
\end{theorem}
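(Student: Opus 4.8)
The plan is to deduce both statements from the multilinear Coifman--Fefferman inequality of Theorem~\ref{thm1}, and then to estimate the quantity $\big\|(\mathscr{M}_{L(\log L)}(\vec f))^p\big\|_{M_X^u}$ appearing on its right-hand side by the product of the input norms, using the boundedness of the multilinear maximal operator $\mathscr{M}$. First I would check that Theorem~\ref{thm1} applies: $X$ is a Banach function space, $\vec b\in\mathrm{BMO}^l$, $\mathcal{T}_{\vec b}$ satisfies Hypothesis~\ref{hyp}, $X'\in\mathbb{M}$, and $u\in\mathbb{W}_X^0$; the last follows from $u^{1/p}\in\mathbb{W}_{X^p}^0$ upon raising the two defining inequalities of Definition~\ref{def-wxalpha} to the $p$-th power (using $\|\chi_B\|_{X^p}=\|\chi_B\|_X^{1/p}$ and $\sum_j a_j\le(\sum_j a_j^{1/p})^p$ for $a_j\ge 0$, $p\ge 1$). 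This gives
$$\big\|(\mathcal{T}_{\vec b}(\vec f))^p\big\|_{M_X^u}\le C\,\|\vec b\|_{\mathrm{BMO}}^p\,\|M\|_{\mathfrak{B}_{X',u}}^{pl+\max\{2,p\}}\,\big\|(\mathscr{M}_{L(\log L)}(\vec f))^p\big\|_{M_X^u},$$
so everything reduces to controlling $\big\|(\mathscr{M}_{L(\log L)}(\vec f))^p\big\|_{M_X^u}$.

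For part~$(1)$ ($p=1$) I would start from the pointwise estimate $\|g\|_{L\log L,Q}\lesssim\langle M(g\chi_Q)\rangle_Q\le\langle Mg\rangle_Q$ (Stein's $L\log L$ bound for $M$, together with $|g\chi_Q|\le M(g\chi_Q)$ a.e.\ on $Q$), which yields the pointwise domination $\mathscr{M}_{L(\log L)}(\vec f)(x)=\sup_{Q\ni x}\prod_{i=1}^m\|f_i\|_{L\log L,Q}\lesssim\sup_{Q\ni x}\prod_{i=1}^m\langle Mf_i\rangle_Q=\mathscr{M}(Mf_1,\dots,Mf_m)(x)$. I would then feed this into the boundedness of $\mathscr{M}$ transferred from $X_1\times\cdots\times X_m\to X$ to the Morrey--Banach scale $M_{X_1}^{u_1}\times\cdots\times M_{X_m}^{u_m}\to M_X^u$ (which uses $u=\prod_i u_i$, the $\mathbb{W}_{X_i}^0$- and $\mathbb{M}$-conditions, and produces exactly the norm $\|\mathscr{M}\|_{M_{X_1}^{u_1}\times\cdots\to M_X^u}$), and finally use that $M$ is bounded on each $M_{X_i}^{u_i}$ (since $X_i\in\mathbb{M}$ and $u_i\in\mathbb{W}_{X_i}^0$). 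Composing these three facts produces the factor $\prod_i\|M\|_{M_{X_i}^{u_i}\to M_{X_i}^{u_i}}\|f_i\|_{M_{X_i}^{u_i}}$ and hence assertion~$(1)$.

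For part~$(2)$ ($p>1$) I would pass to the $p$-convexified spaces. Using the identities $\|h^p\|_{M_X^u}=\|h\|_{M_{X^p}^{u^{1/p}}}^p$ and $\||f_i|^p\|_{M_{X_i}^{u_i}}=\|f_i\|_{M_{X_i^p}^{u_i^{1/p}}}^p$, and the fact that $\|\chi_B\|_{X^p}\lesssim\prod_i\|\chi_B\|_{X_i^p}$ is merely the $1/p$-th power of the hypothesis $\|\chi_B\|_X\lesssim\prod_i\|\chi_B\|_{X_i}$, the estimate to be proved is equivalent (after taking $p$-th roots) to
$$\big\|\mathscr{M}_{L(\log L)}(\vec f)\big\|_{M_{X^p}^{u^{1/p}}}\lesssim\|\mathscr{M}\|_{M_{X_1^p}^{u_1^{1/p}}\times\cdots\to M_{X^p}^{u^{1/p}}}\prod_{i=1}^m\|f_i\|_{M_{X_i^p}^{u_i^{1/p}}}.$$
Here the key is to dominate $\mathscr{M}_{L(\log L)}$ by $\mathscr{M}$ on the convexified spaces without spending an extra Hardy--Littlewood maximal operator. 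For this I would fix $\rho>1$, use the local embedding $\|g\|_{L\log L,Q}\lesssim_\rho\|g\|_{L^\rho,Q}$ (so $\mathscr{M}_{L(\log L)}(\vec f)\lesssim_\rho\mathscr{M}_{L^\rho}(\vec f)$, the $L^\rho$-averaged multilinear maximal operator), and then invoke that for $\rho$ sufficiently close to $1$ the operator $\mathscr{M}_{L^\rho}$ is still bounded from $M_{X_1^p}^{u_1^{1/p}}\times\cdots$ to $M_{X^p}^{u^{1/p}}$, with norm comparable to $\|\mathscr{M}\|$ between the same spaces; the room for this self-improvement is supplied by the hypothesis $(X_i^p)'\in\mathbb{M}$ (a reverse-Hölder/open-endedness property), and the transference to the Morrey scale again uses $u_i^{1/p}\in\mathbb{W}_{X_i^p}^0$ and $u^{1/p}\in\mathbb{W}_{X^p}^0$. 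Undoing the convexification identities then gives assertion~$(2)$.

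The step I expect to be the main obstacle is exactly this last passage for $p>1$: the naive route $\mathscr{M}_{L(\log L)}(\vec f)\lesssim\mathscr{M}(Mf_1,\dots,Mf_m)$ that is used when $p=1$ would introduce the factor $\prod_i\|M\|_{M_{X_i}^{u_i}}$, which is present in assertion~$(1)$ but absent from assertion~$(2)$; removing it forces one to exploit that the boundedness of $\mathscr{M}$ self-improves to $L^\rho$-averages for some $\rho>1$, which genuinely uses $(X_i^p)'\in\mathbb{M}$ and a Morrey-scale version of the reverse-Hölder argument. A secondary, more routine point is establishing the transference of the boundedness of $\mathscr{M}$ (and of $\mathscr{M}_{L^\rho}$) from the Banach function spaces $X_i^p$ to the Morrey--Banach spaces $M_{X_i^p}^{u_i^{1/p}}$ with the quantitative control on operator norms needed to match the stated inequalities.
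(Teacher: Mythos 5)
Your reduction via Theorem \ref{thm1} (including the verification that $u^{1/p}\in\mathbb{W}_{X^p}^0$ implies $u\in\mathbb{W}_X^0$) and your treatment of part (1) coincide with the paper's argument: pointwise domination $\mathscr{M}_{L(\log L)}(\vec f)\lesssim \mathscr{M}(Mf_1,\dots,Mf_m)$, transference of the bound for $\mathscr{M}$ to the Morrey--Banach scale (the paper's Lemma \ref{thm-multi-maximal-bounded-mxu} with Remark \ref{wxi0}), and boundedness of $M$ on each $M_{X_i}^{u_i}$.

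For part (2), however, your key step is a genuine gap. You dominate $\mathscr{M}_{L(\log L)}$ by $\mathscr{M}_{L^\rho}$ (fine) and then assert that, for $\rho$ close to $1$, $\mathscr{M}_{L^\rho}$ is bounded from $M_{X_1^p}^{u_1^{1/p}}\times\cdots\times M_{X_m^p}^{u_m^{1/p}}$ to $M_{X^p}^{u^{1/p}}$ \emph{with norm comparable to} $\|\mathscr{M}\|$, ``supplied by'' $(X_i^p)'\in\mathbb{M}$. Since $\mathscr{M}_{L^\rho}(\vec f)=\mathscr{M}(|f_1|^\rho,\dots,|f_m|^\rho)^{1/\rho}$, this amounts to a self-improvement of the hypothesis $\mathscr{M}:X_1^p\times\cdots\times X_m^p\to X^p$ to the $\rho$-concavified spaces $X_i^{p/\rho}$, together with a new transference to the Morrey scale; no such multilinear self-improvement theorem is proved or cited in the paper, the known linear results of this type require $M$ bounded on the space itself rather than on its associate, and the claimed comparability $\|\mathscr{M}_{L^\rho}\|\simeq\|\mathscr{M}\|$ is unjustified. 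Note also that your route never uses the case-(2) hypothesis $X_i'\in\mathbb{M}$, which is exactly what the paper exploits. Moreover, your stated motivation for avoiding the ``naive'' route is mistaken: the paper does use $\mathscr{M}_{L(\log L)}(\vec f)\lesssim\mathscr{M}(Mf_1,\dots,Mf_m)$ for $p>1$ as well, and then removes the inner $M$ by applying the extrapolation Lemma \ref{ho4-thm3.1} with $f=Mf_i$, $g=f_i$, $p_0=q_0=p>1$ (valid since $\int_{\mathbb{R}^n}(Mf_i)^p w\,dx\le C\int_{\mathbb{R}^n}|f_i|^p w\,dx$ for $w\in A_1\subseteq A_p$ when $p>1$, and since $(X_i^p)^{1/p}=X_i\in\mathbb{M}'$ and $u_i^{1/p}\in\mathbb{W}_{X_i^p}^0$), obtaining $\|Mf_i\|_{M_{X_i^p}^{u_i^{1/p}}}\le C\|f_i\|_{M_{X_i^p}^{u_i^{1/p}}}$; the resulting constant is absorbed into the $\lesssim$, which is why no $\prod_i\|M\|$ factor appears in (2), whereas for $p=1$ this extrapolation is unavailable and the explicit factor remains. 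Replacing your self-improvement claim by this extrapolation step would close the gap.
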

\begin{remark}
	On the one hand, Theorem \ref{thm2} can yield a wealth of applications with respect to spaces, see Theorem \ref{thm3} for $u$-Morrey-Lorentz space, Corollary \ref{cor1} for Lorentz space, Theorem \ref{thm4} for Morrey space with variable exponent and Corollary \ref{cor2} for Lebesgue space with variable exponent. 
	On the other hand, as applications with respect to operators of Theorem \ref{thm2}, the boundedness for the iterated commutators of the multilinear maximal singular integral operators and the multilinear Bochner-Riesz square functions on Morrey-Banach spaces is established in Theorems \ref{thm-maximal} and \ref{thm-square}, respectively. 
\end{remark}
\begin{remark}
	We now make some comments on Theorem \ref{thm2}. Firstly, as mentioned earlier, the first and third authors \cite{tan} studied the boundedness of multilinear operators and their commutators in rearrangement invariant Banach spaces, which is generalized to the Morrey spaces in our Theorem \ref{thm2}. 
	Secondly, notice that multilinear Hardy-Littlewood maximal operator satisfies Hypothesis \ref{hyp}, which implies that Theorem \ref{thm2} is the essential generalization of \cite[Theorem 1.2]{zha}. Besides, compared to Theorem 1.2 in \cite{zha}, our conditions are weaker since  $\mathbb{W}_{X}^{1} \subseteq \mathbb{W}_{X}^{0}$. 
	Thirdly, when $m=1$, Theorem \ref{thm2} covers \cite[Theorem 4.3]{ho2}. To be more specific, Theorem 4.3 in \cite{ho2} requires the boundedness of the commutator $T_{b}$ on $X$, which is weakened in our Theorem \ref{thm2}.
	Lastly, for $0\leq\lambda<n$ and $1< q,q_1,\dots,q_m<\infty$ satisfying $\frac{1}{q} = \sum_{i=1}^{m} \frac{1}{q_i}$, we have  
	\begin{align*}
		\left\|\mathcal{T}_{\vec{b}}(\vec{f})\right\|_{L^{q,\lambda}(\mathbb{R}^n)}
		\leq C \|\vec{b}\|_{\text{BMO}} 
		\prod_{i=1}^m \|f_i\|_{L^{q_i,\lambda}(\mathbb{R}^n)}.
	\end{align*}
	where $L^{q,\lambda}(\mathbb{R}^n)$ is the classical Morrey space defined by \eqref{morrey}. In particular, if $\mathcal{T}$ is the multilinear Calder\'on-Zygmund operator, then \eqref{eq7} provides the boundedness for the multilinear Calder\'on-Zygmund operators and their iterated commutators on product Morrey spaces.
\end{remark}

\subsubsection{\textbf{Results under the Hypothesis \ref{hyp-new}}}
This part of our work focuses on the boundedness of operators satisfying the $W_{r}$ property on Morrey-Banach spaces. We aim to show that all operators in a certain class that meet the $W_{r}$ property, including rough singular integral operators, fractional integral operators with Hörmander kernels, intrinsic Littlewood-Paley square functions, non-integral operators outside the scope of Calder\'on–Zygmund theory and the associated square functions \cite{ler8}, are bounded on Morrey-Banach spaces. We first present the concept of $\mathbb{W}_{X,\delta}$, which is closely related to the boundedness for the Hardy-Littlewood maximal operator of exponential type. 
For any $0< \alpha\leq 1$, we write $u\in \mathbb{W}_{X,\delta}$ if $u$ satisfies
$$\sum_{j=0}^{\infty} \frac{\left\|\chi_{B(x,r)}\right\|_{(X^{\delta})'}^{\delta}}{\left\|\chi_{B(x,2^{j+1}r)}\right\|_{(X^{\delta})'}^{\delta}} u\left(x,2^{j+1}r\right) \leq Cu(x,r)$$
for some $C>0$. 
Next, we recall the iterated commutator of the sublinear operator. 
Given a sublinear operator $T$ and $b\in L_{\text{loc}}^{1}(\rn)$, the iterated commutator of $T$ is defined by 
$$T_{b}^{m}(f)(x) \coloneqq T\left((b(x)-b(\cdot))^{m} f\right)(x), \quad x\in \rn.$$
We now present our final main theorem under the $W_{r}$ property.
\begin{theorem}\label{thm-rough}
	Let $1\leq r<s\leq \infty$ and $m\in \mathbb{N}^{+}$. Suppose a sublinear operator $T$ and its sharp grand maximal truncation operator $M_{T,s}^{\sharp}$ enjoy Hypothesis \ref{hyp-new} (the $W_{r}$ property). Fixed $r_{0}> \max\{r,s'\}$, if $(X')^{\frac{1}{r_{0}}}, X^{\frac{1}{r_{0}}}$ are Banach spaces, $X^{\frac{1}{r_{0}}}, (X')^{\frac{1}{r_{0}}} \in \mathbb{M}$, and $u^{r_{0}}\in \mathbb{W}_{X^{{1}/{r_{0}}}}^{0}, u\in \mathbb{W}_{X',r_{0}^{-1}}$, then for any $b\in \text{BMO}$,
	$$\left\|T_{b}^{m} f\right\|_{M_{X}^{u}} \leq C \|b\|_{\text{BMO}}^{m} \|M\|_{M_{X^{1/r_{0}}}^{u^{r_{0}}}}^{\frac{1}{r_0}} \|M_{r_{0}}\|_{\mathfrak{B}_{X',u}} \|f\|_{M_{X}^{u}}.$$
\end{theorem}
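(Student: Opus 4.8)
The plan is to fix an arbitrary ball $B=B(x_0,R)$, to bound $u(x_0,R)^{-1}\|(T_b^m f)\chi_B\|_X$ by the right-hand side of the asserted inequality with a constant independent of $B$, and then to take the supremum over all balls; one works first on the dense class of bounded, compactly supported $f$ so that $T_b^m f$ is well defined and the sparse domination below applies, and removes this restriction at the end. By the recovery of the $X$-norm from its associate space, $\|(T_b^m f)\chi_B\|_X=\sup\{\int_{\rn}|T_b^m f|\,|g|:\ \|g\|_{X'}\le1,\ \operatorname{supp}g\subseteq B\}$, so it suffices to estimate $\int_{\rn}|T_b^m f|\,|g|$ for such $g$. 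Fix a cube $Q_0$ with $B\subseteq Q_0$ and $\ell(Q_0)\approx R$, and split $f=f\chi_{3Q_0}+f\chi_{\rn\setminus 3Q_0}=:f^0+f^\infty$; by the sublinearity of $T$, $|T_b^m f|\le|T_b^m f^0|+|T_b^m f^\infty|$. The core analytic input is the bilinear sparse form domination granted by Hypothesis \ref{hyp-new}: arguing as in \cite{ler8} and expanding the commutator around cube averages as in \cite{ler1}, one obtains a sparse family $\mathcal{S}$ of dyadic subcubes of $Q_0$ with
\begin{equation*}
\int_{Q_0}|T_b^m(f^0)|\,|g|\le C\sum_{k=0}^{m}\binom{m}{k}\sum_{Q\in\mathcal{S}}|Q|\,\big\langle|b-\langle b\rangle_Q|^{m-k}|f|\big\rangle_{r,Q}\,\big\langle|b-\langle b\rangle_Q|^{k}|g|\big\rangle_{s',Q}.
\end{equation*}

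I would handle the local piece as follows. Since $r_0>\max\{r,s'\}$, Hölder's inequality together with the John--Nirenberg inequality lets one replace every factor $\langle|b-\langle b\rangle_Q|^{j}\,\cdot\,\rangle_{r,Q}$ and $\langle|b-\langle b\rangle_Q|^{j}\,\cdot\,\rangle_{s',Q}$ by $\|b\|_{\text{BMO}}^{j}\langle\,\cdot\,\rangle_{r_0,Q}$, so that the right-hand side above is dominated by $\|b\|_{\text{BMO}}^{m}\sum_{Q\in\mathcal{S}}|Q|\langle|f|\rangle_{r_0,Q}\langle|g|\rangle_{r_0,Q}$; the sparseness of $\mathcal{S}$ turns this into $\lesssim\|b\|_{\text{BMO}}^{m}\int_{Q_0}M_{r_0}(f^0)\,M_{r_0}(g)$. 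Applying Hölder in $X$, the factor $\|M_{r_0}(g)\chi_{Q_0}\|_{X'}$ is absorbed into $\|M_{r_0}\|_{\mathfrak{B}_{X',u}}\|g\|_{X'}$ using $(X')^{1/r_0}\in\mathbb{M}$, while the factor $\|M_{r_0}(f^0)\chi_{Q_0}\|_X$ is rewritten via the identity $\|M_{r_0}\varphi\|_X=\|M(|\varphi|^{r_0})\|_{X^{1/r_0}}^{1/r_0}$ and estimated through the boundedness of $M$ on $M_{X^{1/r_0}}^{u^{r_0}}$ (guaranteed by $X^{1/r_0}\in\mathbb{M}$ and $u^{r_0}\in\mathbb{W}_{X^{1/r_0}}^{0}$), together with $\||f|^{r_0}\|_{M_{X^{1/r_0}}^{u^{r_0}}}=\|f\|_{M_X^u}^{r_0}$ and the regularity of $u$ encoded in $u^{r_0}\in\mathbb{W}_{X^{1/r_0}}^{0}$ (which in particular allows passing between $u$ at comparable balls). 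This yields a local bound of the form $u(x_0,R)\,\|b\|_{\text{BMO}}^{m}\,\|M\|_{M_{X^{1/r_0}}^{u^{r_0}}}^{1/r_0}\,\|M_{r_0}\|_{\mathfrak{B}_{X',u}}\,\|f\|_{M_X^u}$.

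For the global piece I would expand $(b(x)-b(y))^{m}=\sum_{k=0}^{m}\binom{m}{k}(b(x)-\langle b\rangle_{Q_0})^{k}(\langle b\rangle_{Q_0}-b(y))^{m-k}$, so that on $B$ one has $T_b^m(f^\infty)(x)=\sum_{k}\binom{m}{k}(b(x)-\langle b\rangle_{Q_0})^{k}\,T\!\big((\langle b\rangle_{Q_0}-b)^{m-k}f^\infty\big)(x)$; the localized factors $(b-\langle b\rangle_{Q_0})^{k}\chi_B$ are absorbed inside $X$ at the cost of $\|b\|_{\text{BMO}}^{k}$ and a harmless loss of integrability, again via $r_0>r$. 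Each remaining term $T\big((\langle b\rangle_{Q_0}-b)^{m-k}f\chi_{\rn\setminus 3Q_0}\big)$ is controlled on $B$ by telescoping over the dyadic ancestors $Q_0=Q_0^{(0)}\subset Q_0^{(1)}\subset\cdots$: at step $i$ the $L^{s}$-oscillation of $T(\,\cdot\,\chi_{(3Q_0^{(i)})^{c}})$ over $Q_0^{(i)}$ is dominated by $M_{T,s}^{\sharp}$ evaluated at a point of $Q_0^{(i)}$ (by the very definition of the sharp grand maximal truncation operator), while the single-annulus contribution $T(\,\cdot\,\chi_{3Q_0^{(i+1)}\setminus 3Q_0^{(i)}})$ on $Q_0^{(i)}$ is bounded, via Hypothesis \ref{hyp-new} applied on $3Q_0^{(i+1)}$ with a fixed small $\lambda$ (so that the $W_r$-constant does not grow with $i$), by $\langle|f|\rangle_{r,3Q_0^{(i+1)}}$ up to a polynomial-in-$i$ factor of $\|b\|_{\text{BMO}}$ coming from replacing $\langle b\rangle_{Q_0}$ by $\langle b\rangle_{3Q_0^{(i+1)}}$. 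Since $M_{T,s}^{\sharp}$ also satisfies Hypothesis \ref{hyp-new}, its contributions are processed by the same mechanism. Summing over $i$ and passing from averages to $X$-norms via Hölder in $X^{1/r_0}$ produces a series in the dyadic dilates $2^{i}B$ which, by design, is exactly of the type controlled by $u\in\mathbb{W}_{X',r_0^{-1}}$; this forces convergence to $\lesssim u(x_0,R)\|f\|_{M_X^u}$ times a constant, the factor $\|M_{r_0}\|_{\mathfrak{B}_{X',u}}$ entering from the treatment of the pairing with $g$.

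Putting the two pieces together, dividing by $u(x_0,R)$ and taking the supremum over all balls $B$ gives the stated estimate. The step I expect to be the main obstacle is the global piece: since $T$ is only assumed to satisfy the $W_r$ property, with no kernel smoothness or decay available, the tail $T(\,\cdot\,\chi_{(3Q_0)^{c}})$ admits no pointwise Calderón--Zygmund-type bound and must be handled abstractly through $M_{T,s}^{\sharp}$ together with one-dyadic-level-at-a-time telescoping (the one-level restriction being precisely what keeps the $W_r$-constant from blowing up); moreover the constraint $r<s$ forces one to work with the $L^{s}$-oscillation rather than with $L^{s}$-averages of $T$, and the scale-by-scale summation must be matched exactly with the $\mathbb{W}_{X',r_0^{-1}}$-structure of the Morrey weight $u$, which is the most delicate part of the bookkeeping.
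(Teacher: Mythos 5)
Your overall strategy (fix a ball $B$, dualize against $g\in X'$ supported in $B$, split $f$ into a local part $f\chi_{3Q_0}$ and a tail, and handle the tail by telescoping over dyadic ancestors with $M_{T,s}^{\sharp}$) is genuinely different from the paper's, and the tail is exactly where your argument has a real gap. The paper never splits $f$ at all: it uses the block-space duality \eqref{mxu<sup<mxu} to reduce everything to $\sup_{\|h\|_{\mathfrak{B}_{X',u}}\le 1}\int |T_b^m f|\,|h|$, applies the \emph{global} bilinear sparse form domination of Lerner--Lorist--Ombrosi (Lemma \ref{lem-new1}) directly to the pair $(f,h)$, reduces the commutator weights to the endpoint terms $k=0,m$ by Lemma \ref{lem-new2}, and then uses H\"older with exponent $r_0$, John--Nirenberg, sparseness, the $M_X^u$--$\mathfrak{B}_{X',u}$ H\"older inequality \eqref{holder-M-B}, the boundedness of $M_{r_0}$ on $\mathfrak{B}_{X',u}$ (Ho's result, where $u\in\mathbb{W}_{X',r_0^{-1}}$ enters), and the identity $\|M_{r_0}f\|_{M_X^u}\le \|M\|_{M_{X^{1/r_0}}^{u^{r_0}}}^{1/r_0}\|f\|_{M_X^u}$. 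In that scheme the tail problem you flag simply does not arise, because the sparse form already controls the full pairing over $\mathbb{R}^n$ and the Morrey structure only enters through the block-space machinery.

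In your version, the global piece is not actually proved, and as sketched it would not go through: Hypothesis \ref{hyp-new} only gives level-set (weak-type) information about $T(\varphi\chi_Q)$ on $Q$ itself, and $M_{T,s}^{\sharp}$ only controls $L^s$-oscillations, i.e.\ differences, not sizes; so the single-annulus contributions $T(\cdot\,\chi_{3Q_0^{(i+1)}\setminus 3Q_0^{(i)}})$ on $B$ are \emph{not} pointwise (or even in $L^{r}$-average on $B$) bounded by $\langle|f|\rangle_{r,3Q_0^{(i+1)}}$ "via Hypothesis \ref{hyp-new} with a fixed small $\lambda$" as you claim --- turning oscillation plus weak-type control into such bounds requires median/good-$\lambda$ arguments and is essentially a re-derivation of the sparse domination of \cite{ler8}, which is precisely what Lemma \ref{lem-new1} packages for you. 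Two further problems with the bookkeeping: (i) in your duality the function $g$ lies in $X'$ with support in $B$ and is not a block, so the constant $\|M_{r_0}\|_{\mathfrak{B}_{X',u}}$ in the statement does not appear the way you assert (you would instead use $(X')^{1/r_0}\in\mathbb{M}$ to bound $\|M_{r_0}g\|_{X'}$, and you would still have to relate the resulting per-ball constants and the factor $u(x_0,R)$ to the hypotheses $u^{r_0}\in\mathbb{W}_{X^{1/r_0}}^0$ and $u\in\mathbb{W}_{X',r_0^{-1}}$, which you leave implicit); (ii) your opening reduction "work on the dense class of bounded compactly supported $f$ and remove the restriction at the end" is unavailable: as the paper itself notes, density of bounded compactly supported functions in Morrey--Banach spaces is unknown, so a limiting argument of this kind cannot be invoked without justification. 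The local piece of your argument (H\"older with $r_0>\max\{r,s'\}$, John--Nirenberg, sparseness, then the maximal-operator bounds) is fine and parallels the paper's treatment of $I_1^0$ and $I_2^0$, but as it stands the proof of the theorem is incomplete.
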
 
\begin{remark}
	We would like to make two remarks. On the one hand, as mentioned earlier, there are many operators that meet the $W_{r}$ property, such as rough singular integral operators, nonintegral operators falling outside the scope of Calder\'on–Zygmund theory and the associated square functions, see \cite[Remark 4.4]{ler9} for details. Therefore, Theorem \ref{thm-rough} holds for all these operators. On the other hand, note that Theorem \ref{thm-rough} can be obtained by the extrapolation in \cite{ho4}. Here, we provide an alternative proof, which also features a better constant.
\end{remark}
As an application, when $\Omega \in L^{\infty}(\mathbb{S}^{n-1})$ with zero average over the sphere,  the following rough homogeneous singular integral operators are taken into account, 
\begin{align}\label{rough}
	T_{\Omega} = \text{p.v.} \int_{\rn} f(x-y)\frac{\Omega(y/|y|)}{|y|^{n}} dy, \quad x\in \rn.
\end{align}Recall that, steamed from the research by Calder\'on-Zygmund \cite{cal1}, the rough homogeneous singular integral operator $T_{\Omega}$ has sparked extensive interest in the past few decades. In 1956, it was shown in \cite{cal2} that $T_{\Omega}$ is bounded on $L^{p}(\rn)$ for $1<p<\infty$ and $\Omega\in L\text{log}L(\mathbb{S}^{n-1})$ which was weakened to $\Omega\in H^{1}(\mathbb{S}^{n-1})$ in \cite{ric} where $H^{1}$ denotes the classical Hardy space. 
In \cite{see}, an illustrious outcome is that $T_{\Omega}$ is of weak type $(1,1)$ for $\Omega\in L\text{log}L(\mathbb{S}^{n-1})$. 
In the weighted case, Duoandikoetxea and Rubio de Francia \cite{duo1} derived that $T_{\Omega}$ is bounded on $L^{p}(w)$ for $w$ a Muckenhoupt $A_{p}$ weight, $1<p<\infty$ and $\Omega\in L^{\infty}(\mathbb{S}^{n-1})$. 
Later, the quantitative weighted estimation of $T_{\Omega}$ was established by dyadic decomposition in \cite{hyt1}. 
Recently, 
Lerner, Lorist and Ombrosi \cite{ler8} verified that the iterated commutator $(T_{\Omega})_{b}^{m}$ is bounded from $L^{p}(u)$ to $L^{p}(v)$ with $p>1$ and $u,v\in A_{p}$. 
To go a step further, the boundedness for the iterated commutator $(T_{\Omega})_{b}^{m}$ of $T_{\Omega}$ on Morrey-Banach spaces is stated in the corollary below.
\begin{corollary}
	Let $2 <s\leq \infty$ and $m\in \mathbb{N}^{+}$. Suppose $T_{\Omega}$ is a rough homogeneous singular integral operator defined by \eqref{rough}. Fixed $r_{0}> s'$, if $(X')^{\frac{1}{r_{0}}}, X^{\frac{1}{r_{0}}}$ are Banach spaces, $X^{\frac{1}{r_{0}}}, (X')^{\frac{1}{r_{0}}} \in \mathbb{M}$, and $u^{r_{0}}\in \mathbb{W}_{X^{{1}/{r_{0}}}}^{0}, u\in \mathbb{W}_{X',r_{0}^{-1}}$, then for any $b\in \text{BMO}$, 
	$$\left\|(T_{\Omega})_{b}^{m} f\right\|_{M_{X}^{u}} \leq C \|b\|_{\text{BMO}}^{m} \|M\|_{M_{X^{1/r_{0}}}^{u^{r_{0}}}}^{\frac{1}{r_0}} \|M_{r_{0}}\|_{\mathfrak{B}_{X',u}} \|f\|_{M_{X}^{u}}.$$
\end{corollary}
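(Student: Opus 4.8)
The plan is to deduce the corollary directly from Theorem~\ref{thm-rough}. Since $T_{\Omega}$ is linear, hence sublinear, it suffices to exhibit an exponent $r$ with $1\le r<s$ and $\max\{r,s'\}=s'$ — which, because $2<s\le\infty$ forces $s'<2<s$, is the case for \emph{any} $r\in[1,s']$ — and to verify that both $T_{\Omega}$ and its sharp grand maximal truncation operator $M_{T_{\Omega},s}^{\sharp}$ satisfy Hypothesis~\ref{hyp-new} (the $W_{r}$ property) for this $r$. Once this is done, all the structural hypotheses of Theorem~\ref{thm-rough} with $r_{0}>\max\{r,s'\}=s'$ (that $(X')^{1/r_{0}}, X^{1/r_{0}}$ are Banach spaces, $X^{1/r_{0}},(X')^{1/r_{0}}\in\mathbb{M}$, $u^{r_{0}}\in\mathbb{W}_{X^{1/r_{0}}}^{0}$ and $u\in\mathbb{W}_{X',r_{0}^{-1}}$) coincide verbatim with those of the corollary, and the conclusion follows.

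First I would dispose of the $W_{r}$ property for $T_{\Omega}$ itself. Because $\Omega\in L^{\infty}(\mathbb{S}^{n-1})$ has vanishing average over the sphere, the classical Calder\'on--Zygmund theory (together with the refinements of Duoandikoetxea and Rubio de Francia \cite{duo1}) shows that $T_{\Omega}$ is bounded on $L^{t}(\mathbb{R}^n)$ for every $1<t<\infty$; in particular on $L^{r}$ for our chosen $r$. Then, for any cube $Q$ and any $f$ with $f\chi_{Q}\in L^{r}(\mathbb{R}^n)$, Chebyshev's inequality and the identity $\|f\chi_{Q}\|_{L^{r}}^{r}=\langle|f|\rangle_{r,Q}^{r}|Q|$ give, for every $\lambda\in(0,1)$,
$$
\Big|\big\{x\in Q:\ |T_{\Omega}(f\chi_{Q})(x)|>\lambda^{-1/r}\|T_{\Omega}\|_{L^{r}\to L^{r}}\,\langle|f|\rangle_{r,Q}\big\}\Big|
\le \lambda\,\frac{\|T_{\Omega}(f\chi_{Q})\|_{L^{r}}^{r}}{\|T_{\Omega}\|_{L^{r}\to L^{r}}^{r}\,\langle|f|\rangle_{r,Q}^{r}}
\le \lambda|Q|,
$$
which is precisely the $W_{r}$ property with $\varphi_{T_{\Omega},r}(\lambda)=\lambda^{-1/r}\|T_{\Omega}\|_{L^{r}(\rn)\to L^{r,\infty}(\rn)}$, since $\|T_{\Omega}\|_{L^{r}\to L^{r,\infty}}\le\|T_{\Omega}\|_{L^{r}\to L^{r}}$.

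The substantive point — and the step I expect to be the main obstacle — is the $W_{r}$ property for $M_{T_{\Omega},s}^{\sharp}$, which cannot be obtained by soft arguments and requires exploiting the fine structure of the rough kernel. This is exactly the estimate established by Lerner, Lorist and Ombrosi \cite{ler8} (see also \cite[Remark 4.4]{ler9}): writing $T_{\Omega}=\sum_{j}T_{j}$ via the Duoandikoetxea--Rubio de Francia smooth dyadic annular decomposition, each $T_{j}$ obeys Fourier-decay/Littlewood--Paley estimates of the form $\|\widehat{K_{j}}\|_{\infty}\lesssim 2^{-\delta|j|}$ together with an $L^{2}$ smoothness bound, and summing these against the oscillation functional $\text{osc}_{s}(\cdot;Q)$ controls $M_{T_{\Omega},s}^{\sharp}$ on $L^{r}$ — this is where the restriction $s>2$ (equivalently $s'<2$, so that the $L^{s}$-oscillation is absorbed by the $L^{2}$ Fourier estimate) enters, and one can take any sufficiently small $r$ with $1\le r\le s'$. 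Granting this $W_{r}$ bound for $M_{T_{\Omega},s}^{\sharp}$, both hypotheses of Theorem~\ref{thm-rough} are in force for $T=T_{\Omega}$ with this $r$, and applying that theorem with $r_{0}>s'$ yields
$$
\left\|(T_{\Omega})_{b}^{m} f\right\|_{M_{X}^{u}} \le C \|b\|_{\text{BMO}}^{m} \|M\|_{M_{X^{1/r_{0}}}^{u^{r_{0}}}}^{1/r_{0}} \|M_{r_{0}}\|_{\mathfrak{B}_{X',u}} \|f\|_{M_{X}^{u}},
$$
as claimed. Beyond invoking \cite{ler8,ler9}, the only genuinely new work is the parameter bookkeeping: that $s>2$ permits the choice $r\le s'$ (so that $r<s$ and $\max\{r,s'\}=s'$), and that the $\mathbb{W}$-class and $\mathbb{M}$-class conditions on $u$ and $X$ transcribe without any change from Theorem~\ref{thm-rough} to the corollary.
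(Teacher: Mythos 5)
Your proposal is correct and follows essentially the same route as the paper: the corollary is obtained as a direct application of Theorem \ref{thm-rough}, with the $W_r$ property of $T_{\Omega}$ (via its weak/strong $(r,r)$ bounds) and of $M_{T_\Omega,s}^{\sharp}$ for $s>2$ imported from \cite{ler8,ler9}, and the choice $r\le s'<2<s$ makes $\max\{r,s'\}=s'$ so the hypotheses transcribe verbatim. (Only a cosmetic quibble: your Chebyshev argument actually verifies Hypothesis \ref{hyp-new} with $\varphi(\lambda)=\lambda^{-1/r}\|T_\Omega\|_{L^r\to L^r}$, which suffices since the hypothesis allows any nonincreasing $\varphi$; the concluding comparison with $\|T_\Omega\|_{L^r\to L^{r,\infty}}$ goes in the wrong direction but is not needed.)
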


This paper is organized as follows: Section \ref{pre} is devoted to presenting the necessary definitions and some related lemmas for our theorems. In Section \ref{pr+1+2}, we prove the Coifman-Fefferman inequality (Theorem \ref{thm1}) and the boundedness of iterated commutators (Theorem \ref{thm2}) in Morrey-Banach spaces. Section \ref{pr+last} establishes the boundedness of rough singular integral operators on Morrey-Banach spaces (Theorem \ref{thm-rough}). Applications in $u$-Morrey-Lorentz spaces and Morrey spaces with variable exponents are discussed in Sections \ref{pr+3} and \ref{pr+4}, respectively. Finally, Section \ref{pr+5} provides applications of our theorems to certain other operators.

Throughout this paper, we always use $C$ to represent a positive constant, which is independent of the main parameters, but it may change at each occurrence. 
Let $\mathscr{S}(\rn)$ denote the set of all Schwartz functions on $\rn$, equipped with the classical well-known topology determined by a countable family of norms, and $\mathscr{S}'(\rn)$ its topological dual. 
If $a \leq C b$ ($a \geq C b,$ respectively) for any $a, b \in \mathbb{R}$, then we denote $a \lesssim b $ ($a \gtrsim b$, respectively) where $C$ is independent of $a$ and $b$. If $a \lesssim b \lesssim a$, then denote $a \simeq b$.

\section{Preliminary}\label{pre}
We begin by presenting some fundamental results on weights, Morrey-Banach spaces, Lorentz spaces, and sparse families.
\subsection{Weights}\label{weight}
\ \\
\indent Given $f\in L_{\textup{loc}}^1(\mathbb{R}^n)$, the Hardy-Littlewood maximal operator $M$ of $f$ is defined by
\begin{align}\label{h-l}
	Mf(x) \coloneqq \sup_{Q \owns x} \frac{1}{|Q|} \int_Q |f(y)|dy, \quad x\in \mathbb{R}^n,
\end{align}
where the supremum takes all the cubes that contain the point $x$. By a weight $w$ we mean a non-negative locally integrable function on $\rn$. The corresponding weighted Lebesgue spaces $L^{p}(w)$ are defined by
$$\|f\|_{L^{p}(w)} \coloneqq \int_{\mathbb{R}^{n}} \left|f(x)\right|^{p} w(x) dx.$$
In \cite{muc2}, Muckenhoupt pointed out that $M: L^p(w) \to L^p(w)$ for $1<p<\infty$ if and only if $w\in A_p(\rn)$, where $A_p(\rn)$ (simply denoted by $A_{p}$) refers to the classical Muckenhoupt weight, and we define the $A_p$ characteristic by
$$[w]_{A_p} \coloneqq \sup_{Q} \left(\frac{1}{|Q|} \int_{Q} w(y) dy \right) \left(\frac{1}{|Q|} \int_{Q} w(y)^{1-p'} dy \right)^{p-1} < \infty.$$
The constant $[w]_{A_p}$ is called the $A_p$ constant of $w$. Besides, $M: L^p(w) \to L^{p,\infty}(w)$ for $1\leq  p<\infty$ if and only if $w\in A_p$, where $w\in A_1$ means that 
\begin{align*}
	[w]_{A_{1}} &\coloneqq \sup_{Q} \left(\frac{1}{|Q|} \int_{Q} w(x) dx\right) \left\|\chi_{Q} w^{-1}\right\|_{L^{\infty}(w)}= \sup_{Q} \left(\frac{1}{|Q|} \int_{Q} w(x) dx \right) \left(\underset{x \in Q}{\operatorname{ess~inf}}~w(x)\right)^{-1} <\infty.
\end{align*}
Later on, Buckley \cite{buc} showed that 
$$[w]_{A_{p}}^{\frac{1}{p}} \leq \|M\|_{L^{p}(w)\to L^{p,\infty}(w)} \leq C_{n} [w]_{A_{p}}^{\frac{1}{p}},\quad \hbox{for \ } 1\leq p<\infty,$$
and
$$\|M\|_{L^{p}(w)\to L^{p}(w)} \leq C_{n} p' [w]_{A_{p}}^{\frac{1}{p-1}}, \quad \hbox{for \ } p>1.$$
Moreover, let $\sigma$ be the dual weight of $w$, that is, $\sigma=w^{1-p'}$ with $1<p<\infty$, then the $L^{p}(w)$ boundedness for the Hardy-Littlewood maximal operator is given by
$$\|M\|_{L^{p}(w)\to L^{p}(w)} \leq C_{n} \left(p'[w]_{A_{p}} [\sigma]_{A_{\infty}}\right)^{\frac{1}{p}}.$$

Next, we focus on other weights that will play crucial roles in our subsequent analysis.  According to the strictly increasing property of $A_{p}$ weights with respect to $p$, it is natural to define the $A_{\infty}$ weight,
$$A_{\infty} \coloneqq \bigcup_{p>1} A_{p}.$$
It is well known that a weight $w\in A_{\infty}(\rn)$ (simply denoted by $w\in A_{\infty}$) if and only if 
$$[w]_{A_{\infty}} \coloneqq \sup_{Q} \frac{1}{w(Q)} \int_{Q} M(w\chi_{Q}) (x) dx<\infty.$$

Now we consider the more complicated multiple weights. In \cite{ler}, Lerner et al. introduced the following multilinear Hardy-Littlewood maximal operator $\mathscr{M}$ defined by 
$$\mathscr{M}(\vec{f})(x) \coloneqq \sup_{Q\owns x}\prod_{i=1}^m \frac{1}{|Q|} \int_Q |f_i(y_i)|dy_i$$
for any ball $Q\subseteq\mathbb{R}^n$.
Let $m\in \mathbb{N}^{+}$. For $1 \leq  p_1, \dots, p_m <\infty$, $\frac{1}{p} = \frac{1}{p_1} + \cdots +\frac{1}{p_m}$ and weights $w_{1},\dots,w_{m}$, denote $\vec{p} = (p_1, \dots, p_m)$ and $\vec{w} = (w_1, \dots, w_m)$, and set $\nu_{\vec{w}} \coloneqq \prod_{i=1}^{m} w_i^{p/p_i}$, then
$$\mathscr{M}: L^{p_{1}(w_{1})} \times \cdots L^{p_{m}(w_{m})} \to L^{p,\infty}(\nu_{\vec{w}})$$
if and only if $\vec{w} \in A_{\vec{p}}$, where
\begin{equation*}
	[\vec{w}]_{A_{\vec{p}}} \coloneqq \sup_{Q} \left(\frac{1}{|Q|} \int_{Q} \nu_{\vec{w}}(x) dx \right) \prod_{i=1}^m \left(\frac{1}{|Q|} \int_{Q} w_i(x)^{1-p'_i} dx\right)^{\frac{p}{p'_i}} < \infty.
\end{equation*}
The quantity $[\vec{w}]_{A_{\vec{p}}}$ is called the $A_{\vec{p}}$ constant of $\vec{w}$. If there exists an index $i\in \{1,\dots,m\}$ such that $p_i = 1$, then $\left(\frac{1}{|Q|} \int_{Q} w_i(x)^{1-p'_i} dx \right)^{1/p'_i}$ is understood as $\left( \inf_{Q} w_i \right)^{-1}$. Besides, if $1<p_{1},\dots,p_{m}<\infty$, then the boundedness 
$$\mathscr{M}: L^{p_{1}}(w_{1}) \times \cdots \times L^{p_{m}}(w_{m}) \to L^{p}(\nu_{\vec{w}})$$
holds if and only if $\vec{w}\in A_{\vec{p}}$. 

A characterization of the multilinear $A_{\vec{p}}$ condition in terms of the linear $A_p$ classes was presented in \cite{ler}. 
If $\vec{w}=(w_1,\dots,w_m)$, $1\leq  p_1, \dots, p_m <\infty$ and $\vec{p} = \{p_1,\dots,p_m\}$, then $\vec{w} \in A_{\vec{p}}$ if and only if 
$$\left\{ 
\begin{aligned}
	&w_j^{1-p_j'} \in A_{mp_j'}, \quad j=1,\dots,m,\\
	&\nu_{\vec{w}} \in A_{mp},
\end{aligned}
\right.$$
where the condition $w_j^{1-p_j'} \in A_{mp_j'}$ in the case $p_j=1$ is understood as $w_j^{\frac{1}{m}} \in A_1$.
\subsection {BMO spaces}
Consider now the bounded mean oscillation function space $\text{BMO}(\rn)$ (simply denoted by BMO), which is a set of locally integrable functions on $\rn$ with 
$$\|f\|_{\text{BMO}} \coloneqq \sup_{Q} \frac{1}{|Q|} \int_{Q} |f(x)-\langle f\rangle_{Q} | dx <\infty,$$
where the supremum is taken over all cubes in $\rn$. The BMO space was first introduced by John and Nirenberg \cite{joh} to study  a class of nonlinear partial differential equation problems. However, it was not until Fefferman \cite{fef2} demonstrated in 1972 that BMO is the dual space of the Hardy space 
$H^{1}$ 
that BMO gained significant attention.
In many cases, BMO serves as a suitable alternative is to $L^{\infty}(\rn)$. However,  in reality, $L^{\infty}(\rn) \subsetneqq \text{BMO}$ and $\|b\|_{\text{BMO}} \leq 2\|b\|_{L^{\infty}(\rn)}$ for any $b\in L^{\infty}(\rn)$.

Given a locally integrable function $f$ on $\rn$, the sharp maximal operator $M^{\sharp}$ is defined by
$$M^{\sharp} f(x) \coloneqq \sup_{Q \owns x} \frac{1}{|Q|} \int_{Q} |f(y)-\langle f\rangle_{Q}| dy, \quad x\in \rn.$$
 The sharp maximal operator is closely related to BMO, a connection that is particularly evident in certain interpolation properties of various operators. To be more specific,
$$\text{BMO}(\rn) = \left\{f\in L_{\text{loc}}^{1}(\rn): M^{\sharp}f \in L^{\infty}(\rn)\right\}.$$
At this point, we have , we have $\|f\|_{\text{BMO}} = \left\|M^{\sharp}f\right\|_{L^{\infty}(\rn)}$.

The John-Nirenberg inequality from \cite{joh}, a fundamental result about the BMO function, is that for any $f\in \text{BMO}$, any cube $Q$ and $\alpha>0$, we have
$$\left|\left\{x\in Q: |f(x)-\langle f\rangle_{Q}| > \alpha\right\}\right| \leq e|Q| e^{-\frac{\alpha}{2^{n} e \|f\|_{\text{BMO}}}}.$$

The last topic about BMO is commutator. Given a linear operator $T$ defined on a set of measurable functions on $\rn$, if $b$ is a measurable function, then the corresponding commutator $[b,T]$ is a linear operator on measurable functions $f$ defined by
$$[b,T]f(x) \coloneqq b(x)Tf(x) - T(bf)(x), \quad x\in \rn.$$
The first result concerning the above commutator, as presented in \cite{coi3}, is that if $T$ is a classic singular integral operator with a smooth kernel and $b\in \text{BMO}$, then $[b,T]$ is bounded on $L^{p}(\rn)$ for $1<p<\infty$. Furthermore, for the Riesz transform $T$, the condition $b\in \text{BMO}$ is necessary. 
See \cite{coi3} for more information about commutators.

\subsection{Morrey-Banach function space}\label{m-b}
\ \\
\indent We first recall the definition of Banach function space \cite[Chapter 1, Definitions 1.1 and 1.3]{ben}. 
\begin{definition}[Banach function space, \cite{ben}]\label{def-bfs}
	A function space $X\subseteq \mathcal{M}(\mathbb{R}^n)$ is said to be a Banach function space on $\mathbb{R}^n$ if it satisfies
	\begin{enumerate}
		\item $\|f\|_X = 0 \Leftrightarrow f = 0$ a.e.,
		\item $|g|\leq |f|$ a.e. $\Rightarrow \|g\|_X\leq \|f\|_X$,
		\item $0\leq f_n \uparrow f$ a.e. $\Rightarrow \|f_n\|_X \uparrow \|f\|_X$,
		\item $\chi_E\in \mathcal{M}(\mathbb{R}^n)$ and $|E|<\infty \Rightarrow \chi_E\in X$,
		\item $\chi_E\in \mathcal{M}(\mathbb{R}^n)$ and $|E|<\infty \Rightarrow \int_{E} |f(x)|dx < C_E \|f\|_X$, for all $f\in X$ and some $C_E>0$.
	\end{enumerate}
\end{definition}

	The Lorentz spaces, the Orlicz spaces and the Lebesgue spaces with variable exponents are Banach function spaces (see \cite{ben,die} for details). According to item (5) of Definition \ref{def-bfs}, for any Banach function space $X$, we have $X\subseteq L_{\text{loc}}^{1}(\rn)$. For a Banach function space $X$ and $1\leq p<\infty$, we define  $X^p=\left\{f\in X: \|f\|_{X^p}\coloneqq \||f|^p\|_X^{{1}/{p}}<\infty\right\}$. 

The following associate space of $X$ originates from \cite[Chapter 1, Definitions 2.1 and 2.3]{ben}.
\begin{definition}[Associate space, \cite{ben}]\label{def-as-bfs}
	Suppose that $X$ is a Banach function space. The associate space of $X$, denoted by $X'$, is the collection of all Lebesgue measurable function $f$ such that
	$$\|f\|_{X'} = \sup\left\{\left|\int f(t)g(t)dt\right|: g\in X, \|g\|_X\leq 1\right\}<\infty.$$
\end{definition}
According to \cite[Chapter 1, Theorems 1.7 and 2.2]{ben}, if 
$X$ is a Banach function space, then
 $X'$ 
is also a Banach function space. The following theorem, extracted from \cite[Chapter 1, Theorem 2.4]{ben}, presents the H\"older  inequality for 
$X$ and $X'$.
\begin{lemma}(\cite{ben}).\label{holder-X}
	Let $X$ be a Banach space. Then for any $f\in X$ and $g\in X'$, we have
	$$\int_{\mathbb{R}^n} |f(x)g(x)|dx \leq \|f\|_X \|g\|_{X'}.$$
\end{lemma}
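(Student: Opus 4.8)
The plan is to deduce the inequality directly from the definition of the associate norm (Definition \ref{def-as-bfs}) together with the lattice axiom (item (2) of Definition \ref{def-bfs}), the only genuine subtlety being that the definition of $\|\cdot\|_{X'}$ is phrased in terms of \emph{convergent} integrals, so the integrability of $fg$ cannot be presupposed and must be obtained through a truncation (exhaustion) argument. First I would dispose of the degenerate case: if $\|f\|_X=0$, then $f=0$ a.e.\ by item (1) of Definition \ref{def-bfs}, hence $\int_{\rn}|fg|\,dx=0$ and the inequality is trivial; moreover $\|g\|_{X'}<\infty$ since $g\in X'$. Assume henceforth $\|f\|_X>0$.

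Next, since $f\in X\subseteq L^1_{\mathrm{loc}}(\rn)$ and, because $X'$ is itself a Banach function space, also $g\in X'\subseteq L^1_{\mathrm{loc}}(\rn)$, both functions are finite a.e., so the sets
$$E_N \coloneqq B(0,N)\cap\{x:|f(x)|\le N\}\cap\{x:|g(x)|\le N\}, \qquad N\in\mathbb{N},$$
form an increasing sequence with $|E_N|<\infty$, with $f$ and $g$ bounded on $E_N$, and with $\bigcup_N E_N=\rn$ up to a null set. For each fixed $N$ I would set
$$u_N \coloneqq \frac{|f|}{\|f\|_X}\,\operatorname{sgn}(g)\,\chi_{E_N},$$
where $\operatorname{sgn}$ is the usual signum (with $\operatorname{sgn}(0)=0$). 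Then $u_N$ is measurable and $|u_N|\le |f|/\|f\|_X$ a.e.; hence by item (2) of Definition \ref{def-bfs}, together with the elementary identity $\||f|\|_X=\|f\|_X$ (which also follows by applying item (2) twice), one gets $u_N\in X$ with $\|u_N\|_X\le 1$. Since $u_N$ and $g$ are bounded on the finite-measure set $E_N$ and vanish off it, $\int_{\rn}u_N g\,dx$ is a well-defined finite integral, and by construction $u_N g = \frac{|f|}{\|f\|_X}|g|\chi_{E_N}\ge 0$, so
$$\int_{\rn} u_N g\,dx \;=\; \frac{1}{\|f\|_X}\int_{E_N}|f(x)|\,|g(x)|\,dx.$$

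Now I would apply the definition of $\|g\|_{X'}$ (Definition \ref{def-as-bfs}) with the admissible test function $u_N$, which satisfies $\|u_N\|_X\le 1$; this yields $\bigl|\int_{\rn}u_N g\,dx\bigr|\le \|g\|_{X'}$, and therefore
$$\int_{E_N}|f(x)g(x)|\,dx \;\le\; \|f\|_X\,\|g\|_{X'} \qquad\text{for every }N.$$
Letting $N\to\infty$ and invoking the monotone convergence theorem gives $\int_{\rn}|f(x)g(x)|\,dx\le \|f\|_X\|g\|_{X'}$, as claimed. I expect the main (indeed the only) obstacle to be the a priori integrability of $fg$ — this is precisely what forces the truncation to $E_N$ before one is entitled to invoke the supremum defining $\|\cdot\|_{X'}$; all remaining steps are immediate consequences of the Banach-function-space axioms.
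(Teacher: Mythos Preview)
The paper does not supply its own proof of this lemma; it simply cites Bennett--Sharpley \cite{ben}. Your argument is correct and is essentially the standard one, with the truncation/exhaustion step being genuinely necessary here because the paper's Definition~\ref{def-as-bfs} phrases the associate norm via $\bigl|\int fg\bigr|$ rather than $\int|fg|$ (the latter, which is how Bennett--Sharpley actually define it, would make H\"older immediate without any integrability concern). So your proof is fine, and the only ``difference'' from the paper is that you have written out what the paper leaves to the reference.
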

For any Banach function space $X$, we denote $X\in \mathbb{M}$ if the Hardy-Littlewood maximal operator $M$, as defined in  \eqref{h-l}, is bounded on $X$, and $X\in \mathbb{M}'$ if $M$ is bounded on $X'$.

Next, we recall the definition of Morrey-Banach spaces,
$$
M_{X}^{u} = \Big\{ f \in \mathcal{M}(\mathbb{R}^n): \|f\|_{M_{X}^{u}} \coloneqq \sup_{x\in \mathbb{R}^n, r>0} \frac{1}{u(x,r)} \left\|f \chi_{B(x,r)}\right\|_{X} < \infty\Big\},
$$
where $u(x,r): \mathbb{R}^n \times (0,+\infty) \to (0,+\infty)$ is a Lebesgue measurable function.
To present the extrapolation theory for Morrey-Banach spaces, Ho \cite{ho4} introduced the block space associated with the Banach function space.
\begin{definition}[block space,\cite{ho4}]
	Let $X$ be a Banach function space and $u: \mathbb{R}^n \times (0,\infty) \to (0,\infty)$ be a Lebesgue measurable function. Any Lebesgue measurable function $b$ is an $(X, u)$-block, written as $b\in  \mathfrak{b}_{X,u}$, if $\operatorname{supp} b\subseteq B(x_0,r)$ for some $x_0\in \mathbb{R}^n, r>0$, and
	$$\|b\|_X \leq \frac{1}{u(x_0,r)}.$$
	The block space $\mathfrak{B}_{X,u}$ associated with $X$ is defined by
	$$\mathfrak{B}_{X,u} =\left\{\sum_{k=1}^{\infty} \lambda_k b_k: \sum_{k=1}^{\infty} |\lambda_k|<\infty \ \text{and} \ b_k\in \mathfrak{b}_{X,u}\right\}$$
	and endowed with the norm 
	$$\|f\|_{\mathfrak{B}_{X,u}} = \inf \left\{\sum_{k=1}^\infty |\lambda_k|: f=\sum_{k=1}^\infty \lambda_k b_k \ \text{a.e.}\right\}.$$
\end{definition}
 For the fundamental properties of the block spaces, refer to \cite{ho4}. Similarly to Theorem \ref{holder-X}, the H\"older inequality for $M_X^u$ and $\mathfrak{B}_{X',u}$ is established in the following theorem, see \cite[Lemma 4.2]{ho4} and \cite[Propsition 2.5]{ho3} for the proof details 
\begin{lemma}[\!\!\cite{ho4,ho3}]
	Let $X$ be a Banach space and $u: \mathbb{R}^n \times (0,\infty) \to (0,\infty)$ be a Lebesgue measurable function. Then for any $f\in M_X^u$ and $g\in \mathfrak{B}_{X',u}$, we have
	\begin{align}\label{holder-M-B}
		\int_{\mathbb{R}^n} |f(x)g(x)|dx \leq C\|f\|_{M_X^u} \|g\|_{\mathfrak{B}_{X',u}}.
	\end{align}
	In addition, there exist constants $C_0, C_1>0$ such that for any $f\in M_X^u$,
	\begin{align}\label{mxu<sup<mxu}
		C_0\|f\|_{M_X^u} \leq \sup_{b\in \mathfrak{b}_{X',u}} \int_{\mathbb{R}^n} |f(x)b(x)| dx \leq C_1\|f\|_{M_X^u}.
	\end{align}
\end{lemma}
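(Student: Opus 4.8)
The plan is to establish the two assertions separately. Inequality \eqref{holder-M-B} will follow by expanding $g$ over $(X',u)$-blocks and applying the Hölder inequality of Lemma \ref{holder-X} ball by ball; the two-sided comparison \eqref{mxu<sup<mxu} will reduce, via the associate-space duality $X''=X$ for Banach function spaces, to choosing an optimal test function supported on a single ball and rescaling it into a block.

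For \eqref{holder-M-B}, I would fix $f\in M_X^u$ and $g\in\mathfrak B_{X',u}$, pick a representation $g=\sum_k\lambda_k b_k$ a.e.\ with $\sum_k|\lambda_k|\le\|g\|_{\mathfrak B_{X',u}}+\varepsilon$ and $\operatorname{supp}b_k\subseteq B(x_k,r_k)$, $\|b_k\|_{X'}\le u(x_k,r_k)^{-1}$. For each $k$, Lemma \ref{holder-X} applied to $f\chi_{B(x_k,r_k)}\in X$ and $b_k\in X'$ gives $\int_{\rn}|fb_k|\le\|f\chi_{B(x_k,r_k)}\|_X\|b_k\|_{X'}\le u(x_k,r_k)^{-1}\|f\chi_{B(x_k,r_k)}\|_X\le\|f\|_{M_X^u}$. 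Since then $\sum_k|\lambda_k|\int_{\rn}|fb_k|<\infty$, a Tonelli argument shows $\sum_k|\lambda_k||b_k(x)|<\infty$ for a.e.\ $x$ where $f\ne 0$, whence $|fg|\le\sum_k|\lambda_k||fb_k|$ a.e.; summing and letting $\varepsilon\to0$ gives \eqref{holder-M-B} (indeed with $C=1$).

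For \eqref{mxu<sup<mxu}, the right-hand inequality is immediate from Lemma \ref{holder-X}: any $b\in\mathfrak b_{X',u}$ supported in $B(x_0,r)$ satisfies $\int_{\rn}|fb|\le\|f\chi_{B(x_0,r)}\|_X\|b\|_{X'}\le\|f\|_{M_X^u}$, so $C_1=1$. For the left-hand inequality I would fix a ball $B(x_0,r)$ and use the Lorentz--Luxemburg theorem \cite[Ch.~1, Thm.~2.7]{ben}, namely $X''=X$ with equal norms, together with the lattice property of Definition \ref{def-bfs}(2), to write $\|f\chi_{B(x_0,r)}\|_X=\sup\{\int_{\rn}|f\chi_{B(x_0,r)}|\,h:\ h\ge0,\ \|h\|_{X'}\le1\}$; for each such $h$ the function $b\coloneqq u(x_0,r)^{-1}h\chi_{B(x_0,r)}$ is an $(X',u)$-block with $\int_{\rn}|f|b=u(x_0,r)^{-1}\int_{\rn}|f\chi_{B(x_0,r)}|h$, so taking suprema over $h$ and then over $(x_0,r)$ yields $\|f\|_{M_X^u}\le\sup_{b\in\mathfrak b_{X',u}}\int_{\rn}|fb|$, i.e.\ $C_0=1$.

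The only genuinely non-formal ingredient is this last duality identity---realizing the $X$-norm of $f$ restricted to a ball as a supremum of integrals against nonnegative functions of $X'$-norm at most $1$---which is exactly the reflexivity $X''=X$ and ultimately rests on the Fatou property (axiom (3) of Definition \ref{def-bfs}). A minor technical point is the a.e.\ absolute convergence of the block series against $|f|$, dealt with by Tonelli as above; everything else is bookkeeping with the definitions of $X$, $X'$, and $\mathfrak B_{X',u}$.
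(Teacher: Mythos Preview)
Your proof is correct. Note, however, that the paper does not actually supply its own proof of this lemma: it is quoted from \cite[Lemma~4.2]{ho4} and \cite[Proposition~2.5]{ho3}, with the surrounding text explicitly directing the reader to those references ``for the proof details''. The argument you give---expanding $g$ into blocks and applying Lemma~\ref{holder-X} on each supporting ball for \eqref{holder-M-B}, then invoking the Lorentz--Luxemburg identity $X''=X$ to realize $\|f\chi_{B}\|_X$ as a supremum over nonnegative $h\in X'$ with $\|h\|_{X'}\le1$ for the lower bound in \eqref{mxu<sup<mxu}---is precisely the standard route taken in Ho's papers, and your constants $C_0=C_1=1$ (so also $C=1$ in \eqref{holder-M-B}) match what that argument produces.

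One small remark: your proof of the lower bound in \eqref{mxu<sup<mxu} nowhere uses the hypothesis $f\in M_X^u$; it in fact shows $\|f\|_{M_X^u}\le\sup_{b\in\mathfrak b_{X',u}}\int|fb|$ for \emph{any} measurable $f$. This is worth noting, since later in the paper (e.g.\ in the proof of Theorem~\ref{thm1}) the inequality is applied to $|\mathcal T_{\vec b}(\vec f)|^p$ before membership in $M_X^u$ has been established, so the stronger version you actually prove is what is needed there.
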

\noindent The following results concern the boundedness of the Hardy-Littlewood maximal operator $M$ on the block space $\mathfrak{B}_{X',u}$ (\!\!\cite[Theorem 4.4]{ho4}) and the multilinear maximal function $\mathscr{M}$ on the product Morrey-Banach space (\!\!\cite[Lemma 3.3]{zha}).
\begin{lemma}[\!\!\cite{ho4}]\label{the boundedness of the Hardy-Littlewood maximal operator on {B}_{X',u}}
	Let $X$ be a Banach space and $u: \mathbb{R}^n \times (0,\infty) \to (0,\infty)$ be a Lebesgue measurable function. If $X'\in \mathbb{M}$ and $u\in \mathbb{W}_X^0$, then the Hardy-Littlewood maximal operator $M$ is bounded on $\mathfrak{B}_{X',u}$.
\end{lemma}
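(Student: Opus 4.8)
The plan is to reduce the assertion to a single uniform bound on blocks and then prove that bound by an annular decomposition of $Mb$. Since $\mathfrak{B}_{X',u}$ is a Banach function space \cite{ho4} — in particular it is an ideal and its norm is countably subadditive — if $f=\sum_k\lambda_k b_k$ is any decomposition of $f\in\mathfrak{B}_{X',u}$ into $(X',u)$-blocks with $\sum_k|\lambda_k|<\infty$, then sublinearity of $M$ gives $0\le Mf\le\sum_k|\lambda_k|Mb_k$ pointwise a.e., hence $\|Mf\|_{\mathfrak{B}_{X',u}}\le\sum_k|\lambda_k|\,\|Mb_k\|_{\mathfrak{B}_{X',u}}$. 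Taking the infimum over all admissible decompositions, it suffices to produce a constant depending only on $n$, $\|M\|_{X'\to X'}$ and the $\mathbb{W}_X^0$-constant of $u$ such that $\|Mb\|_{\mathfrak{B}_{X',u}}\le C$ for every $(X',u)$-block $b$. The ideal property used here is immediate from the definition of the block space: given $f=\sum\lambda_kb_k$ and $0\le g\le\sum_i|\lambda_i||b_i|$, the functions $g_k:=\frac{|\lambda_k||b_k|}{\sum_i|\lambda_i||b_i|}\,g$ are, after normalisation, again blocks that sum to $g$; alternatively one may quote it from \cite{ho4}.

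One auxiliary fact is needed: since $X'\in\mathbb{M}$, for every cube $Q$ one has $\|\chi_Q\|_X\|\chi_Q\|_{X'}\simeq|Q|$. The lower bound is H\"older's inequality (Lemma \ref{holder-X}) applied to $\chi_Q$ and $\chi_Q$; for the upper bound, using the Lorentz--Luxemburg identity $(X')'=X$ \cite{ben} one writes $\|\chi_Q\|_X=\sup_{\|h\|_{X'}\le1}\int_Q|h|$, and for such $h$,
\[
\int_Q|h|=|Q|\langle|h|\rangle_Q\le|Q|\inf_{x\in Q}Mh(x)\le|Q|\,\frac{\|\chi_Q\,Mh\|_{X'}}{\|\chi_Q\|_{X'}}\le|Q|\,\frac{\|M\|_{X'\to X'}}{\|\chi_Q\|_{X'}}.
\]
A consequence I will use, obtained from the $j=0$ term of condition $(2)$ in Definition \ref{def-wxalpha} (with $\alpha=0$) together with $\dfrac{\|\chi_{B(x_0,2r)}\|_X}{\|\chi_{B(x_0,r)}\|_X}\simeq 2^n\,\dfrac{\|\chi_{B(x_0,r)}\|_{X'}}{\|\chi_{B(x_0,2r)}\|_{X'}}\lesssim 2^n$, is that $u(x_0,2r)\lesssim u(x_0,r)$.

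Now fix a block $b$ with $\operatorname{supp} b\subseteq B:=B(x_0,r)$ and $\|b\|_{X'}\le u(x_0,r)^{-1}$, and set $B_j:=B(x_0,2^jr)$, so that $Mb=(Mb)\chi_{B_1}+\sum_{j\ge1}(Mb)\chi_{B_{j+1}\setminus B_j}$. For the local piece, boundedness of $M$ on $X'$ gives $\|(Mb)\chi_{B_1}\|_{X'}\le\|M\|_{X'\to X'}u(x_0,r)^{-1}$; since it is supported in $B_1$, it equals $\lambda_0$ times an $(X',u)$-block with $|\lambda_0|=u(x_0,2r)\|(Mb)\chi_{B_1}\|_{X'}\lesssim\|M\|_{X'\to X'}$, using $u(x_0,2r)\lesssim u(x_0,r)$. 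For $j\ge1$ and $x\in B_{j+1}\setminus B_j$, any cube $Q\ni x$ meeting $\operatorname{supp} b$ has $\operatorname{diam}Q\gtrsim 2^jr$, so $\langle|b|\rangle_Q\lesssim(2^jr)^{-n}\int_B|b|\le(2^jr)^{-n}\|b\|_{X'}\|\chi_B\|_X$ by Lemma \ref{holder-X}; hence $(Mb)\chi_{B_{j+1}\setminus B_j}\lesssim(2^jr)^{-n}u(x_0,r)^{-1}\|\chi_B\|_X\,\chi_{B_{j+1}}$, and by the lattice property of $X'$ together with the auxiliary fact,
\[
\|(Mb)\chi_{B_{j+1}\setminus B_j}\|_{X'}\lesssim\frac{\|\chi_B\|_X}{(2^jr)^n\,u(x_0,r)}\,\|\chi_{B_{j+1}}\|_{X'}\simeq\frac{1}{u(x_0,r)}\cdot\frac{\|\chi_{B(x_0,r)}\|_X}{\|\chi_{B(x_0,2^{j+1}r)}\|_X}.
\]
This piece, being supported in $B_{j+1}$, is $\lambda_j$ times an $(X',u)$-block with $|\lambda_j|\lesssim\dfrac{u(x_0,2^{j+1}r)}{u(x_0,r)}\cdot\dfrac{\|\chi_{B(x_0,r)}\|_X}{\|\chi_{B(x_0,2^{j+1}r)}\|_X}$. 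Summing all pieces,
\[
\|Mb\|_{\mathfrak{B}_{X',u}}\lesssim\|M\|_{X'\to X'}+\frac{1}{u(x_0,r)}\sum_{j\ge1}\frac{\|\chi_{B(x_0,r)}\|_X}{\|\chi_{B(x_0,2^{j+1}r)}\|_X}\,u(x_0,2^{j+1}r)\lesssim\|M\|_{X'\to X'}+1,
\]
the last inequality being exactly condition $(2)$ of $\mathbb{W}_X^0$. This is independent of $b$, which finishes the proof.

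The main obstacle is conceptual rather than computational: one must locate precisely where each hypothesis enters. The hypothesis $X'\in\mathbb{M}$ is used twice — once directly to control the local part of $Mb$, and once, less obviously, to obtain $\|\chi_Q\|_X\|\chi_Q\|_{X'}\simeq|Q|$, which is what converts the geometric decay $(2^jr)^{-n}$ of $Mb$ on far annuli into the ratio $\|\chi_{B(x_0,r)}\|_X/\|\chi_{B(x_0,2^{j+1}r)}\|_X$ appearing in the definition of $\mathbb{W}_X^0$; and the summability condition $(2)$ of $\mathbb{W}_X^0$ is exactly what is needed (no more, no less) to sum the tail. The secondary technical point — that the pointwise sublinear domination $Mf\le\sum_k|\lambda_k|Mb_k$ transfers to the $\mathfrak{B}_{X',u}$-norm — is handled by the ideal and Fatou properties of the block space from \cite{ho4}, as noted above.
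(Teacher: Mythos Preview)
The paper does not supply a proof of this lemma; it is quoted directly from \cite[Theorem 4.4]{ho4}. Your argument is correct and is in fact the standard route to this result: reduce by sublinearity and the lattice property of $\mathfrak{B}_{X',u}$ to a uniform estimate on single blocks, then decompose $Mb$ into a local part (handled by $X'\in\mathbb{M}$) and annular pieces whose $X'$-norms are converted, via the equivalence $\|\chi_Q\|_X\|\chi_Q\|_{X'}\simeq|Q|$, into exactly the summands appearing in condition~(2) of $\mathbb{W}_X^0$. All the auxiliary steps---the doubling bound $u(x_0,2r)\lesssim u(x_0,r)$ from the $j=0$ term, the pointwise decay of $Mb$ on far annuli, and the lattice/Fatou reduction to blocks---are justified as you indicate, so there is nothing to correct.
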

\begin{lemma}[\!\!\cite{zha}]\label{thm-multi-maximal-bounded-mxu}
	Let $X, X_i, i=1,\dots,m$ are Banach function spaces with $\|\chi_B\|_X \leq C\prod_{i=1}^m$ $\|\chi_B\|_{X_i}$ for any ball $B\subseteq \mathbb{R}^n$ and $X_i\in \mathbb{M}\cup \mathbb{M}'$. If $\mathscr{M}$ is bounded from $X_1\times\cdots\times X_m$ to $X$, then $\mathscr{M}$ is bounded from $M_{X_1}^{u_1} \times \cdots \times M_{X_m}^{u_m}$ to $M_{X}^{u}$ with $u=\prod_{i=1}^m u_i$ and $u_i\in \mathbb{W}_{X_i}^1, i=1,\dots,m$.
\end{lemma}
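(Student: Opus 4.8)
The plan is to reduce the global bound to a single \emph{uniform local} estimate: it suffices to show that $\dfrac{1}{u(x_0,\rho)}\|\mathscr{M}(\vec f)\chi_{B}\|_{X}\leq C\prod_{i=1}^m\|f_i\|_{M_{X_i}^{u_i}}$ for every ball $B=B(x_0,\rho)$, and then take the supremum over $x_0\in\rn$ and $\rho>0$. First I would record the pointwise splitting: there is a dimensional constant $c_n$ such that for $x\in B$,
$$\mathscr{M}(\vec f)(x)\lesssim\mathscr{M}\big(f_1\chi_{c_nB},\dots,f_m\chi_{c_nB}\big)(x)+\sum_{j=0}^{\infty}\prod_{i=1}^m\langle|f_i|\rangle_{2^{j+1}B}.$$
This comes from sorting the $Q\ni x$ in the definition of $\mathscr{M}$ according to whether $\ell(Q)\lesssim\rho$ — in which case $Q\subseteq c_nB$, so those terms contribute at most $\mathscr{M}(\vec f\chi_{c_nB})(x)$ — or $\ell(Q)\simeq2^{j}\rho$ for some $j\geq0$ — in which case $Q\subseteq2^{j+1}B$ up to harmless dimensional factors and $\langle|f_i|\rangle_Q\lesssim\langle|f_i|\rangle_{2^{j+1}B}$. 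The second summand is a constant independent of $x$.

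Next I would apply $\|\cdot\chi_B\|_X$, use the monotonicity of the $X$-norm, the hypothesis $\|\chi_B\|_X\leq C\prod_i\|\chi_B\|_{X_i}$, and the assumed boundedness $\mathscr{M}\colon X_1\times\cdots\times X_m\to X$, to get
$$\|\mathscr{M}(\vec f)\chi_B\|_X\lesssim\|\mathscr{M}\|_{X_1\times\cdots\times X_m\to X}\prod_{i=1}^m\|f_i\chi_{c_nB}\|_{X_i}+\Big(\sum_{j=0}^{\infty}\prod_{i=1}^m\langle|f_i|\rangle_{2^{j+1}B}\Big)\prod_{i=1}^m\|\chi_B\|_{X_i}.$$
For the first term, the definition of the Morrey norm gives $\|f_i\chi_{c_nB}\|_{X_i}\leq u_i(x_0,c_n\rho)\|f_i\|_{M_{X_i}^{u_i}}$, and the two elementary consequences of $X_i\in\mathbb{M}\cup\mathbb{M}'$ — namely $\|\chi_{c_nB}\|_{X_i}\simeq\|\chi_B\|_{X_i}$ and $\|\chi_E\|_{X_i}\|\chi_E\|_{X_i'}\simeq|E|$ on balls — combined with property (1) of Definition \ref{def-wxalpha}, yield $u_i(x_0,c_n\rho)\lesssim u_i(x_0,\rho)$; hence the first term is $\lesssim\|\mathscr{M}\|_{X_1\times\cdots\times X_m\to X}\,u(x_0,\rho)\prod_i\|f_i\|_{M_{X_i}^{u_i}}$. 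For each factor of the tail, Hölder's inequality for $X_i,X_i'$ (Lemma \ref{holder-X}) and the same elementary facts give
$$\langle|f_i|\rangle_{2^{j+1}B}\|\chi_B\|_{X_i}\leq\frac{\|\chi_{2^{j+1}B}\|_{X_i'}}{|2^{j+1}B|}\|\chi_B\|_{X_i}\|f_i\chi_{2^{j+1}B}\|_{X_i}\lesssim a_{i,j}\|f_i\|_{M_{X_i}^{u_i}},\qquad a_{i,j}:=\frac{\|\chi_B\|_{X_i}}{\|\chi_{2^{j+1}B}\|_{X_i}}u_i(x_0,2^{j+1}\rho).$$

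The decisive step is summing the tail. Property (2) of Definition \ref{def-wxalpha} (with $\alpha=1$) states precisely that $\sum_{j\geq0}2^{j+1}a_{i,j}\leq C\,u_i(x_0,\rho)$, so termwise $a_{i,j}\leq C\,2^{-(j+1)}u_i(x_0,\rho)$; therefore $\prod_{i=1}^m a_{i,j}\leq C^m\,2^{-m(j+1)}u(x_0,\rho)$, the geometric series in $j$ converges, and the tail is $\lesssim u(x_0,\rho)\prod_i\|f_i\|_{M_{X_i}^{u_i}}$. Dividing by $u(x_0,\rho)$ and taking the supremum over $(x_0,\rho)$ completes the proof. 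I expect this summation to be the main obstacle, since it is where the $2^{j+1}$-weighted summability built into $\mathbb{W}_{X_i}^1$ is genuinely needed: with only $\mathbb{W}_{X_i}^0$ one gets $a_{i,j}\lesssim u_i(x_0,\rho)$, and then $\prod_i a_{i,j}\lesssim u(x_0,\rho)$ fails to be summable in $j$ as soon as $m\geq1$ (in fact $\mathbb{W}_{X_i}^{1/m}$ already suffices). The pointwise splitting and the two consequences of $X_i\in\mathbb{M}\cup\mathbb{M}'$ are routine Banach-function-space facts.
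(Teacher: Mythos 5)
The paper does not reprove this lemma: it is quoted from \cite{zha}, and the argument you give (reduce to a uniform local estimate on a ball $B$, split $\mathscr{M}$ into a local part $\mathscr{M}(\vec f\chi_{c_nB})$ and a tail of products of averages over $2^{j+1}B$, use the boundedness of $\mathscr{M}$ on $X_1\times\cdots\times X_m$ for the local part and H\"older for $X_i,X_i'$ plus the facts $\|\chi_{2B}\|_{X_i}\simeq\|\chi_B\|_{X_i}$ and $\|\chi_B\|_{X_i}\|\chi_B\|_{X_i'}\simeq|B|$ under $X_i\in\mathbb{M}\cup\mathbb{M}'$ for the tail) is exactly the standard route taken there. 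Under the stated hypothesis $u_i\in\mathbb{W}_{X_i}^1$ your handling of the tail is valid, so as a proof of the lemma as stated the proposal is correct.

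However, your closing claim — that the summation of the tail ``genuinely needs'' the $2^{j+1}$-weight in $\mathbb{W}_{X_i}^1$ and that $\mathbb{W}_{X_i}^0$ would fail — is wrong, and it contradicts Remark \ref{wxi0}, on which the paper's main results (e.g.\ Theorem \ref{thm2}, where only $u_i^{1/p}\in\mathbb{W}_{X_i^p}^0$ is assumed) actually rely. The error is in how you read condition (2) of Definition \ref{def-wxalpha}: for $\alpha=0$ it is not merely the termwise bound $a_{i,j}\lesssim u_i(x_0,\rho)$ but the full summability $\sum_{j\ge0}a_{i,j}\le C\,u_i(x_0,\rho)$ for every $i$. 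Since the $a_{i,j}$ are nonnegative, this also gives $\sup_j a_{i,j}\le C\,u_i(x_0,\rho)$, and hence
\begin{align*}
\sum_{j\ge0}\prod_{i=1}^m a_{i,j}\;\le\;\Big(\sum_{j\ge0}a_{1,j}\Big)\prod_{i=2}^m\Big(\sup_{j}a_{i,j}\Big)\;\le\;C^m\,u(x_0,\rho),
\end{align*}
so the tail closes with $\mathbb{W}_{X_i}^0$ alone: one uses summability in $j$ for a single index and the termwise bound for the remaining $m-1$ factors. Your device of extracting geometric decay $a_{i,j}\lesssim 2^{-(j+1)}u_i(x_0,\rho)$ from the $\alpha=1$ condition is therefore correct but wasteful (and the claim that $\mathbb{W}^{1/m}$ is the threshold is moot). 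If you remove the sharpness assertion, your argument, run with $\alpha=0$ as above, is precisely the verification behind Remark \ref{wxi0}.
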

\begin{remark}\label{wxi0}
	Upon carefully reviewing the proof of Theorem \ref{thm-multi-maximal-bounded-mxu} in \cite{zha}, we find that the result remains valid for $u_i\in \mathbb{W}_{X_i}^0, i=1,\dots,m$. Moreover, Theorem \ref{thm-multi-maximal-bounded-mxu} also holds in the case $m=1$,  implying that the Hardy-Littlewood maximal operator $M$ is bounded from $M_X^u$ into $M_X^u$ with $u\in \mathbb{W}_{X}^{0}$.
\end{remark}

Now we present the extrapolation theory \cite[Theorem 3.1]{ho4} for Morrey-Banach spaces.
\begin{lemma}[\!\!\cite{ho4}]\label{ho4-thm3.1}
	Let $X$ be Banach function space, $0<p_0<\infty$ and $f,g\in \mathcal{M}(\mathbb{R}^n)$ be non-negative functions that be not identically zero. Suppose that for every $\omega\in A_1$, we have 
	\begin{align}\label{ho4-thm3.1-eq1}
		\int_{\mathbb{R}^{n}} [f(x)]^{p_0} w(x) dx \leq C\int_{\mathbb{R}^{n}} [g(x)]^{p_0} w(x) dx,
	\end{align}
	where $C$ is independent of $f,g$ and $p_0$.
	Suppose that there exist $q_0$ with $p_0\leq q_0<\infty$ and $C>0$, such that $X^\frac{1}{q_0}$ is a Banach function space, $X^\frac{1}{q_0}\in \mathbb{M}'$ and
	\begin{gather*}
		u(x,2r) \leq Cu(x,r),\quad
		\sum_{j=0}^\infty \frac{\|\chi_{B(x,r)}\|_{X^{{1}/{q_0}}}}{\|\chi_{B(x,2^{j+1}r)}\|_{X^{1/{q_0}}}} (u(x,2^{j+1}r))^{q_0} < C (u(x,r))^{q_0}
	\end{gather*}
	for all $x\in \mathbb{R}^n, r>0$. Then for non-negative functions $f,g\in \mathcal{M}(\mathbb{R}^n)$ satisfying that $f,g \not\equiv 0$,
	$$\|f\|_{M_X^u} \leq C\|g\|_{M_X^u}.$$
	Moreover, for every $q\in (1,\infty)$ and non-negative functions $f_i,g_i\in \mathcal{M}(\mathbb{R}^n)$ satisfying \eqref{ho4-thm3.1-eq1} and that $f_i,g_i \not\equiv 0$, $i\in \mathbb{N}$, we have
	\begin{align*}
		\left\| \left(\sum_{i\in\mathbb{N}} |f_i|^q\right)^\frac{1}{q}  \right\|_{M_X^u} \leq C \left\| \left(\sum_{i\in\mathbb{N}} |g_i|^q\right)^\frac{1}{q}  \right\|_{M_X^u}.
	\end{align*}
\end{lemma}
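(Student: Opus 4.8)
The statement is Ho's Rubio de Francia extrapolation theorem on Morrey--Banach spaces \cite[Theorem 3.1]{ho4}, and the plan is to reproduce that argument, which has four ingredients. First I would raise the exponent in the weighted hypothesis: since \eqref{ho4-thm3.1-eq1} holds for every $w\in A_{1}$ and $A_{1}\subseteq A_{q_{0}/p_{0}}$, the $A_{1}$ form of the classical Rubio de Francia extrapolation theorem (which moves the hypothesis from the exponent $p_{0}$ to any $q_{0}\ge p_{0}$ while staying in the $A_{1}$ class) promotes \eqref{ho4-thm3.1-eq1} to the exponent $q_{0}$: there is a constant depending only on $[w]_{A_{1}}$ with $\int_{\rn}f^{q_{0}}w\,dx\le C\int_{\rn}g^{q_{0}}w\,dx$ for all $w\in A_{1}$. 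Running the same machinery on the family $\big((\sum_{i}|f_{i}|^{q})^{1/q},(\sum_{i}|g_{i}|^{q})^{1/q}\big)$ yields the corresponding $\ell^{q}$-valued weighted inequality, which reduces the vector-valued conclusion to the scalar one; so it remains to prove $\|f\|_{M_{X}^{u}}\le C\|g\|_{M_{X}^{u}}$.

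Next I would homogenize and pass to the block space. Put $Y:=X^{1/q_{0}}$, $v:=u^{q_{0}}$, $F:=f^{q_{0}}$ and $G:=g^{q_{0}}$. Since $\|h\chi_{B}\|_{X}^{q_{0}}=\bigl\||h|^{q_{0}}\chi_{B}\bigr\|_{Y}$ for every ball $B$, we have $\|h\|_{M_{X}^{u}}=\|h^{q_{0}}\|_{M_{Y}^{v}}^{1/q_{0}}$, so it is enough to show $\|F\|_{M_{Y}^{v}}\le C\|G\|_{M_{Y}^{v}}$; by \eqref{mxu<sup<mxu} this in turn reduces to bounding $\int_{\rn}F|b|\,dx$ for an arbitrary $(Y',v)$-block $b$. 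The assumption that $X^{1/q_{0}}\in\mathbb{M}'$ (that is, $Y'\in\mathbb{M}$), the doubling bound $u(x,2r)\le Cu(x,r)$, and the decay condition on $u$ are precisely what the block-space maximal bound recalled earlier in this section needs to ensure that the Hardy--Littlewood maximal operator $M$ is bounded on $\mathfrak{B}_{Y',v}$.

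The core is then the Rubio de Francia algorithm inside $\mathfrak{B}_{Y',v}$: for the given block $b$, set $W:=\sum_{k=0}^{\infty}(2\|M\|_{\mathfrak{B}_{Y',v}})^{-k}M^{k}|b|$. Subadditivity of $M$ on the sum gives $|b|\le W$, $\|W\|_{\mathfrak{B}_{Y',v}}\le 2\|b\|_{\mathfrak{B}_{Y',v}}\le 2$, and $MW\le 2\|M\|_{\mathfrak{B}_{Y',v}}W$ a.e., so $W\in A_{1}$ with $[W]_{A_{1}}\le 2\|M\|_{\mathfrak{B}_{Y',v}}$. Using $|b|\le W$, then the weighted inequality from the first step applied to the $A_{1}$ weight $W$, and finally the Hölder inequality \eqref{holder-M-B} between $M_{Y}^{v}$ and $\mathfrak{B}_{Y',v}$,
$$\int_{\rn}F|b|\,dx\le\int_{\rn}FW\,dx\le C([W]_{A_{1}})\int_{\rn}GW\,dx\le C\,\|G\|_{M_{Y}^{v}}\,\|W\|_{\mathfrak{B}_{Y',v}}\le C\,\|G\|_{M_{Y}^{v}}.$$
Taking the supremum over all blocks $b$ and then the $q_{0}$-th root yields $\|f\|_{M_{X}^{u}}\le C\|g\|_{M_{X}^{u}}$, and, combined with the first step, the $\ell^{q}$-valued estimate as well.

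The only step that requires genuine care is the first one: the hypothesis \eqref{ho4-thm3.1-eq1} is quantified only over $A_{1}$ weights, not over all $A_{p_{0}}$, so one must invoke the sharp $A_{1}$ version of extrapolation to land back inside $A_{1}$ at the exponent $q_{0}$, keeping track that every constant depends only on the $A_{1}$-dependence built into \eqref{ho4-thm3.1-eq1} and on $\|M\|_{\mathfrak{B}_{Y',v}}$. Everything else is bookkeeping: the identity relating $\|\cdot\|_{M_{X}^{u}}$ to $\|\cdot\|_{M_{X^{1/q_{0}}}^{u^{q_{0}}}}$, the block-space duality \eqref{mxu<sup<mxu} and Hölder inequality \eqref{holder-M-B}, the maximal bound on $\mathfrak{B}_{Y',v}$, and the standard properties of the Rubio de Francia iteration operator. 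Alternatively, one may simply refer to \cite[Theorem 3.1]{ho4}.
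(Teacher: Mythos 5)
Your proposal is correct and matches the paper's treatment: the paper does not prove this lemma but simply quotes it from \cite[Theorem 3.1]{ho4}, and your reconstruction (promoting \eqref{ho4-thm3.1-eq1} from $p_0$ to $q_0$ within $A_1$ by classical extrapolation, homogenizing via $Y=X^{1/q_0}$, $v=u^{q_0}$, and running the Rubio de Francia algorithm on blocks in $\mathfrak{B}_{Y',v}$ together with \eqref{holder-M-B} and \eqref{mxu<sup<mxu}) is exactly Ho's argument. No gaps of substance; the vector-valued case follows, as you say, by applying the scalar step to the pairs $\bigl(\bigl(\sum_i|f_i|^q\bigr)^{1/q},\bigl(\sum_i|g_i|^q\bigr)^{1/q}\bigr)$ after the standard $\ell^q$-valued extrapolation.
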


\subsection{Lorentz space}
\ \\
\indent This subsection aims to present the definition of the Lorentz space and  its related properties, more details can be found in \cite{gra}.
\begin{definition}[Lorentz space]
	Given $f\in \mathcal{M}(\mathbb{R}^n,u)$ and $0<p,q\leq\infty$, define
	$$\|f\|_{L^{p,q}(u)} = \left\{
	\begin{aligned}
		&\left(\int_0^\infty \left(t^\frac{1}{p} f^\ast(t)\right)^q\frac{dt}{t}\right)^\frac{1}{q}, &\text{if}\ q<\infty,\\
		&\sup_{t>0} t^\frac{1}{p} f^\ast(t), &\text{if}\ q=\infty,
	\end{aligned}\right.$$
	where $f^\ast$ is the rearrangement function of $f$, precisely, $f^\ast(t)=\inf\{s>0: d_f(s)\leq t\}$ with $d_f(s)=u\left(\{x\in \mathbb{R}^n: |f(x)|>s\}\right)$. The set of all $f$ with $\|f\|_{L^{p,q}(u)}<\infty$ is denoted by $L^{p,q}(u)$ and is called the Lorentz space with indices $p$ and $q$. In particular, the notation $L^{p,q}(\rn)$ is reserved for $L^{p,q}(|\cdot|)$.
\end{definition}

	We highlight the following three key points:
	\begin{enumerate}
		\item For $0<p=q\leq\infty$, the space $L^{p,q}(\mathbb{R}^n)$coincides with the classical Lebesgue space $L^p(\mathbb{R}^n)$. Moreover, for $1<p<\infty$ and $1\leq q\leq \infty$, the space $L^{p,q}(\mathbb{R}^n)$ forms a Banach function space. 
		\item According to the definition of Lorentz spaces, if $p>0, 0<q<t\leq \infty$, then the embedding $L^{p,q}(\rn) \subset L^{p,t}(\rn)$ holds.
	 Consequently, for $q<p$, we obtain $L^{p,q}(\rn) \subset L^{p,p}(\rn) = L^{p}(\rn)$. In particular, the inclusion relationships $L^{p,1}(\rn) \subseteq L^{p}(\rn) \subset L^{p,\infty}(\rn)$ hold for $p\geq 1$.	
		\item As stated in \cite[Propsition 1.4.16]{gra}, the dual space of $L^{p,q}(\mathbb{R}^n)$  is given by $L^{p',q'}(\mathbb{R}^n)$ when $1<p,q<\infty$. Additionally, for  $1<p<\infty$, the dual of  $L^{p,1}(\mathbb{R}^n)$ is the space $L^{p',\infty}$.
	\end{enumerate}

The H\"older inequality in Lorentz spaces is a useful tool. We state it in the following lemma and provide a detailed proof.
\begin{lemma}\label{lem4}
	Let $1<p_i<\infty, 0<q_i\leq \infty$ or $p_i=q_i=\infty$, $i=1,\dots,m$. If $f_i\in L^{p_i,q_i}(\mathbb{R}^n), i=1,\dots,m$, then $\prod_{i=1}^m f_i\in L^{p,q}(\mathbb{R}^n)$. Moreover,
	\begin{subnumcases}
		{\bigg\|\prod_{i=1}^m f_i\bigg\|_{L^{p,q}(\mathbb{R}^n)} \leq }
		m^\frac{1}{p} \prod_{i=1}^m\|f_i\|_{L^{p_i,q_i}(\mathbb{R}^n)}, &if $0<q<\infty$,\label{eq1}\\
		p^{-\frac{1}{p}} \prod_{i=1}^m p_i^\frac{1}{p_i}\|f_i\|_{L^{p_i,q_i}(\mathbb{R}^n)}, &if $p<\infty, q=\infty$,\label{eq2}\\
		\prod_{i=1}^m\|f_i\|_{L^{p_i,q_i}(\mathbb{R}^n)}, &if $p=q=\infty$\label{eq3},
	\end{subnumcases}
	where $\frac{1}{p}=\sum_{i=1}^m \frac{1}{p_i}$ and $\frac{1}{q}=\sum_{i=1}^m\frac{1}{q_i}$.
\end{lemma}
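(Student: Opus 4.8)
The plan is to reduce the $m$-linear Hölder inequality for Lorentz spaces to the bilinear case and then iterate, since the multilinear statement follows formally once the two-function version is in hand. First I would recall the fundamental pointwise rearrangement inequality $(fg)^{\ast}(t_1+t_2) \le f^{\ast}(t_1)g^{\ast}(t_2)$; applied with $t_1=t_2=t/2$ this gives $(fg)^{\ast}(t) \le f^{\ast}(t/2)g^{\ast}(t/2)$, and more generally for a product of $m$ functions, $(\prod_{i=1}^m f_i)^{\ast}(t) \le \prod_{i=1}^m f_i^{\ast}(t/m)$. This single inequality is the engine of the whole proof, so I would state and justify it carefully (it is standard, e.g.\ from \cite{gra}).

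Next I would treat the three cases separately. For \eqref{eq1}, with $0<q<\infty$, I plug the rearrangement bound into the definition of $\|\cdot\|_{L^{p,q}}$:
\begin{align*}
	\bigg\|\prod_{i=1}^m f_i\bigg\|_{L^{p,q}(\mathbb{R}^n)}
	= \left(\int_0^\infty \Big(t^{1/p} \Big(\prod_{i=1}^m f_i\Big)^{\!\ast}(t)\Big)^q \frac{dt}{t}\right)^{1/q}
	\le \left(\int_0^\infty \Big(t^{1/p} \prod_{i=1}^m f_i^{\ast}(t/m)\Big)^q \frac{dt}{t}\right)^{1/q}.
\end{align*}
Changing variables $t\mapsto mt$ produces the factor $m^{1/p}$, and then I would apply Hölder's inequality for the $L^{q/q_i}$-type exponents in the variable $t$ (with $\sum 1/q_i = 1/q$) together with $\sum 1/p_i = 1/p$ to split the integral into the product $\prod_{i=1}^m \|f_i\|_{L^{p_i,q_i}}$. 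For \eqref{eq2}, with $q=\infty$, the argument is even simpler: from $t^{1/p}(\prod f_i)^{\ast}(t) \le \prod_i (t/m)^{1/p_i} f_i^{\ast}(t/m) \cdot m^{1/p}$ one takes the supremum over $t$, bounding each $ (t/m)^{1/p_i} f_i^{\ast}(t/m)$ by $\|f_i\|_{L^{p_i,\infty}}$; the stated constant $p^{-1/p}\prod p_i^{1/p_i}$ comes from being slightly more careful — writing $t^{1/p} = \prod_i (p/p_i)^{1/p_i} \cdot p^{-1/p}\,(\text{something})$, i.e.\ optimizing the split of the weight $t^{1/p}$ across the $m$ suprema rather than using the crude $m^{1/p}$. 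Case \eqref{eq3}, with $p=q=\infty$, is immediate since $\|f\|_{L^{\infty,\infty}} = \|f\|_{L^{\infty}}$ and $\|\prod f_i\|_{L^\infty} \le \prod \|f_i\|_{L^\infty}$.

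The main obstacle is bookkeeping of the sharp constants rather than any conceptual difficulty: getting exactly $m^{1/p}$ in \eqref{eq1} requires the change of variables to be done before Hölder (so that the $m^{1/p}$ is not inflated), and getting exactly $p^{-1/p}\prod_{i=1}^m p_i^{1/p_i}$ in \eqref{eq2} requires distributing the power $t^{1/p}$ as $\prod_i t^{1/p_i}$ and then, for each factor, comparing $t^{1/p_i}f_i^{\ast}(t/m)$ against $(t/m)^{1/p_i}f_i^{\ast}(t/m) \cdot m^{1/p_i}$ and re-optimizing — the precise constant is a one-variable calculus exercise. I would also remark that the hypothesis $1<p_i<\infty$ (as opposed to $p_i\ge 1$) is what guarantees each $L^{p_i,q_i}$ is a genuine function space on which these manipulations are legitimate, and the degenerate option $p_i=q_i=\infty$ is included precisely to cover the $L^\infty$ factors that arise from BMO-type inputs elsewhere in the paper. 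Finally, membership $\prod_{i=1}^m f_i \in L^{p,q}$ is a byproduct: the right-hand sides are finite, so the left-hand side is too.
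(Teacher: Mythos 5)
Your proposal is correct and follows essentially the same route as the paper: the submultiplicativity of the decreasing rearrangement, a change of variables producing $m^{1/p}$, and H\"older's inequality in the $t$-variable, with the case $p=q=\infty$ handled trivially. The only cosmetic differences are that for \eqref{eq2} the paper simply cites an exercise in Grafakos whereas you sketch the direct optimization of the splitting $t=\sum_i t_i$ (which indeed yields $p^{-1/p}\prod_i p_i^{1/p_i}$), and the paper treats explicitly the subcase $q<\infty$ with one $q_j=\infty$, which your H\"older step covers implicitly.
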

\begin{proof}
	Consider first the case $p = q = \infty$. For $i=1,\dots,m$, $L^{p_i,q_i}(\mathbb{R}^n) = L^{p,q}(\mathbb{R}^n) = L^\infty (\mathbb{R}^n)$, which shows that \eqref{eq3} is valid directly.
	
	Next consider the case $\frac{1}{m}<p<\infty, q=\infty$. Note that $q=\infty$ means that $q_1=\cdots=q_m=\infty$, then \eqref{eq2} holds by \cite[Exercise 1.1.15]{gra}.
	
	Finally, consider the case $\frac{1}{m}<p<\infty, 0<q<\infty$. From \cite{dao}, we further discuss this in two categories. \\
	(1) For any $i\in \{1,\dots,m\}$, $q_i\neq \infty$. According to \cite[Proposition 1.4.5]{gra}, 
	\begin{align*}
		\left(\prod_{i=1}^m f_i\right)^\ast \left(\sum_{i=1}^m t_i\right) \leq \prod_{i=1}^m f_i^\ast(t_i)
	\end{align*}
	for any $t_i>0$. H\"older inequality asserts that 
	\begin{align*}
		\left\|\prod_{i=1}^m f_i\right\|_{L^{p,q}(\mathbb{R}^n)}^q &= \int_0^\infty \left(t^\frac{1}{p} \left(\prod_{i=1}^m f_i\right)^\ast(t)\right)^q \frac{dt}{t}\\
		&=m^\frac{q}{p} \int_0^\infty \left(t^\frac{1}{p} \left(\prod_{i=1}^m f_i\right)^\ast(mt)\right)^q \frac{dt}{t}\\
		&\leq m^\frac{q}{p} \int_0^\infty \prod_{i=1}^m \left(t^\frac{1}{p_i} f_i^\ast(t)\right)^q \frac{dt}{t}\\
		&\leq m^\frac{q}{p} \prod_{i=1}^m \left(\int_0^\infty \left(t^\frac{1}{p_i} f_i^\ast(t)\right)^{q_i} \frac{dt}{t}\right)^\frac{q}{q_i}\\
		&= m^\frac{q}{p} \prod_{i=1}^m \|f_i\|_{L^{p_i,q_i}(\mathbb{R}^n)}^q.
	\end{align*}
	(2) When there exists a $j\in\{1,\dots,m\}$ such that $q_j=\infty$, suppose that $q_i<\infty$ for any $i\neq j$. Then,
	\begin{align*}
		\left\|\prod_{i=1}^m f_i\right\|_{L^{p,q}(\mathbb{R}^n)}^q &\leq m^\frac{q}{p} \int_0^\infty \prod_{i=1}^m \left(t^\frac{1}{p_i} f_i^\ast(t)\right)^q \frac{dt}{t}\\
		&\leq m^\frac{q}{p} \left(\sup_{t>0} t^{\frac{1}{p_j}} f_j^\ast(t)\right)^q \int_0^\infty \left(\prod_{i\neq j}t^\frac{1}{p_i} f_i^\ast(t)\right)^{q} \frac{dt}{t}\\
		&\leq m^\frac{q}{p} \|f_i\|_{L^{p_j,\infty}}^q \prod_{i=1}^m \|f_i\|_{L^{p_i,q_i}(\mathbb{R}^n)}^q,
	\end{align*}
	which completes the proof. 
\end{proof}

The following lemma concerns the weighted boundedness of the Hardy-Littlewood maximal operator on Lorentz space.
\begin{lemma}[\!\!\cite{acc}]\label{acc-cor2.2}
	Let $1<p<\infty, 0<q\leq \infty$ and $u\in A_p$ with $\sigma\coloneqq u^{1-p'}$. Then there exists a constant $C_{q,n}$ depending on $q$ and $n$ such that 
	\begin{align}\label{2}
		\|M\|_{L^{p,q}(u)} \leq C_{q,n} (p')^{\frac{1}{\min\{p,q\}}} [u]_{A_p}^{\frac{1}{p}} [\sigma]_{A_\infty}^{\frac{1}{\min\{p,q\}}}.
	\end{align}
\end{lemma}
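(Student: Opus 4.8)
The plan is to obtain the weighted Lorentz estimate by real interpolation from the two scalar weighted bounds for $M$ recorded in the Preliminaries --- the Buckley weak-type bound $\|M\|_{L^{r}(w)\to L^{r,\infty}(w)}\leq C_n[w]_{A_r}^{1/r}$, valid for every $1\leq r<\infty$, and the strong bound $\|M\|_{L^{r}(w)\to L^{r}(w)}\leq C_n(r'[w]_{A_r}[w^{1-r'}]_{A_\infty})^{1/r}$ --- handling all indices $q$ at once. The structural input is the \emph{quantitative openness} of the $A_p$ scale: writing $\sigma=u^{1-p'}$ and applying the sharp reverse H\"older inequality to $\sigma\in A_\infty$ (reverse H\"older exponent $1+\tau$ with $\tau\simeq c_n^{-1}[\sigma]_{A_\infty}^{-1}$, the dimensional constant normalised so that $\tau\leq1$), one produces an exponent $p_0\in(1,p)$ with $p_0\geq\tfrac{p+1}{2}$ and $p-p_0\simeq_n(p-1)\tfrac{\tau}{1+\tau}$ for which $u\in A_{p_0}$, the constant $[u]_{A_{p_0}}$ being controlled by $[u]_{A_p}$ up to a $p$-power factor that is harmless once raised to the power $1/p_0\leq\tfrac{2}{p+1}$; as an upper endpoint one takes $p_1:=2p$, so that $u\in A_{p_1}$ with $[u]_{A_{p_1}}\leq[u]_{A_p}$.

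First I would record the two \emph{weak-type} endpoint bounds from Buckley, namely $\|M\|_{L^{p_i}(u)\to L^{p_i,\infty}(u)}\lesssim_n[u]_{A_p}^{1/p_i}$ for $i=0,1$. Viewing $M$ as a bounded sublinear map of the couples $(L^{p_0}(u),L^{p_1}(u))\to(L^{p_0,\infty}(u),L^{p_1,\infty}(u))$ and applying the real-interpolation functor $(\cdot,\cdot)_{\theta,q}$ with $\tfrac1p=\tfrac{1-\theta}{p_0}+\tfrac{\theta}{p_1}$, one gets $M:L^{p,q}(u)\to L^{p,q}(u)$ for every $0<q\leq\infty$, since $(L^{p_0}(u),L^{p_1}(u))_{\theta,q}=(L^{p_0,\infty}(u),L^{p_1,\infty}(u))_{\theta,q}=L^{p,q}(u)$ with the same first index $p$. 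Assembling constants: the product of the two weak-type endpoint norms has exponents of $[u]_{A_p}$ that interpolate exactly to $1/p$, so it is $\lesssim_n[u]_{A_p}^{1/p}$; the real-interpolation and Lorentz-identification constant grows, as $\theta\to0$ and as $q\to0$, at a rate which --- after substituting $\theta^{-1}\lesssim_n p'[\sigma]_{A_\infty}$ and using that $1-\theta$ stays bounded below (guaranteed by $p_0\geq\tfrac{p+1}2$ and $p_1=2p$) --- should amount to a factor $\lesssim_q(p'[\sigma]_{A_\infty})^{1/\min\{p,q\}}$. Multiplying the two pieces gives the asserted bound $C_{q,n}(p')^{1/\min\{p,q\}}[u]_{A_p}^{1/p}[\sigma]_{A_\infty}^{1/\min\{p,q\}}$, and the boundary case $q=p$ is handled directly by the strong $L^p(u)$ bound quoted above.

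The main obstacle is exactly this constant bookkeeping, and it has two delicate points. First, the reverse H\"older inequality must enter in \emph{sharp} quantitative form with its dimensional constant suitably normalised, so that $p_0$ is not merely \emph{some} exponent below $p$ but one with $p-p_0$ of the right order $(p-1)[\sigma]_{A_\infty}^{-1}$ \emph{and} with $p_0$ comparable to $p$; together with the choice of $p_1$ this keeps $1-\theta$ --- hence the Hardy-type part $(1-\theta)^{-1/q}$ of the interpolation constant --- bounded, whereas merely qualitative openness, or a reverse H\"older inequality with the wrong constant, would let $\theta\to1$ for large $p$ and wreck the estimate. Second, one must feed the interpolation the \emph{weak}-type Buckley bounds at both endpoints rather than the strong $L^{p_0}(u)$ bound: the latter carries its own factor $[\sigma]_{A_\infty}^{1/p_0}$, which would combine with the $[\sigma]_{A_\infty}^{1/q}$ coming out of the identification constant to produce the too-large power $[\sigma]_{A_\infty}^{1/p+1/q}$. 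Verifying that, with weak endpoints and these choices, the interpolation/identification constant really does contribute exactly $(p'[\sigma]_{A_\infty})^{1/\min\{p,q\}}$ --- and no worse in the $p$-dependence --- is the one genuinely technical step; since the statement permits $C_{q,n}$ to absorb all $q$-dependence, nothing further is needed. An interpolation-free alternative --- stopping-time/sparse domination of $Mf$ followed by a Lorentz Carleson-embedding estimate --- would instead stumble on the fact that the change of density $f\mapsto f\sigma^{-1}$ that linearises the $L^p$ computation fails to preserve $L^{p,q}$ norms when $q\neq p$.
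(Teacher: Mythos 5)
This lemma is not proved in the paper at all: it is quoted verbatim from the reference \cite{acc}, so there is no in-paper argument to compare with, and your proposal has to be judged on whether it would actually deliver the stated constant. Much of your bookkeeping does check out: the sharp reverse H\"older inequality gives $p_0=1+\tfrac{p-1}{1+\tau}$ with $[u]_{A_{p_0}}\le 2^{p-1}[u]_{A_p}$, and $2^{(p-1)/p_0}$ is harmless; with your endpoints $\theta^{-1}\simeq_n p'[\sigma]_{A_\infty}$; and since the real method is exact of exponent $\theta$, the two weak-type norms combine to $[u]_{A_{p_0}}^{(1-\theta)/p_0}[u]_{A_{p_1}}^{\theta/p_1}\lesssim_n [u]_{A_p}^{1/p}$, as you say. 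The genuine gap is that the step you yourself isolate as ``the one genuinely technical step'' is the entire content of the lemma, and your plan supplies no mechanism for it. The loss in identifying $L^{p,q}(u)$ with the interpolation couples is governed by Hardy-type inequalities, and in their generic form (for $q\ge1$) the Hardy constant enters to the \emph{first} power of $\bigl(\tfrac1{p_0}-\tfrac1p\bigr)^{-1}\simeq p'[\sigma]_{A_\infty}$, which is far too large. To obtain $1/\min\{p,q\}$ one must run Holmstedt's formula keeping the $p_0$-th power inside the inner integral, so that the Hardy constant appears raised to $1/p_0$ when $q\ge p_0$ (and $(p'[\sigma]_{A_\infty})^{1/p_0}\simeq(p'[\sigma]_{A_\infty})^{1/p}$ only because $(p-p_0)\log\bigl(p'[\sigma]_{A_\infty}\bigr)/p$ is bounded for the reverse-H\"older choice of $p_0$), and use the decreasing-rearrangement form of Hardy's inequality when $q<p_0$, which is what produces the exponent $1/q$. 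In particular, for $q>p$ the claimed exponent $1/p$ cannot come from the ``$q\to0$'' behaviour you invoke; it comes from that inner $p_0$-power, which never appears in your write-up.

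A second, avoidable, problem is the upper endpoint $p_1=2p$. For $q<p_1$ the upper Holmstedt term is not covered by the classical dual Hardy inequality, and the obvious substitutes (Minkowski or a power trick after dyadically splitting the tail integral) give constants of size $\bigl(1-2^{-(1/p-1/p_1)}\bigr)^{-1}\simeq p$ for $q\ge1$, or $\bigl(1-2^{-(1/p-1/p_1)q}\bigr)^{-1/q}\simeq (p/q)^{1/q}$ for $q<1$; neither is absorbed by a constant $C_{q,n}$, so with your endpoints you would still owe a genuinely sharp estimate there. The standard way to sidestep this is to interpolate against the trivial endpoint $L^\infty(u)$, where $\|M\|_{L^\infty(u)\to L^\infty(u)}=1$: the upper piece then costs only an absolute constant, and optimizing the splitting height yields $\|M\|_{L^{p,q}(u)}\lesssim (A_0H_0)^{p_0/p}$ with $A_0\lesssim_n\bigl(2^{p-1}[u]_{A_p}\bigr)^{1/p_0}$ and $H_0\lesssim\bigl(Cp'[\sigma]_{A_\infty}\bigr)^{1/\min\{p_0,q\}}$, which gives exactly the stated bound (your direct use of the strong mixed $L^p(u)$ estimate for $q=p$ is fine). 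So: the architecture is workable, but as written the central quantitative claim is asserted rather than proved, and the endpoint choice introduces a $p$-dependence you have not ruled out.
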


The exponents of the weight in the right-hand side of \eqref{2} are optimal, in the sense that, if any of the exponents is replaced by a smaller one, the result no longer holds.

\subsection{Young function and Orlicz space}
\ \\
\indent  A function $\Phi$ defined on $[0,\infty)$ is called a Young function if it is a continuous increasing convex function satisfying
$$\Phi(0)=0 \quad \text{and} \quad \lim_{t\to\infty} \Phi(t) =\infty.$$

As another generalization of Lebesgue spaces, the Orlicz space was introduced by Orlicz in \cite{orl}.
\begin{definition}[\!\!\cite{orl}]
	Given a Young function $\Phi$, the set
	$$L_{\Phi}(\mu) = \left\{f\in L_{\text{loc}}^{1}(\mu): \int_{\rn} \Phi\left(k|f(x)|\right)d\mu <\infty \; \text{for some}\; k>0\right\}$$
	is called Orlicz space. Moreover, for any $f\in L_{\Phi}(\mu)$, its Luxemburg norm is defined by
	$$\|f\|_{\Phi(\mu)} \coloneqq \inf \left\{ \lambda>0: \int_{\rn} \Phi\left(\frac{|f(x)|}{\lambda}\right) d\mu \leq 1\right\}.$$
\end{definition}
Given a fixed cube $Q$, the localized version of the Orlicz space, denoted by $L_{\Phi}\left(Q, \frac{\mu}{\mu(Q)}\right)$ consists of all functions $f\in L_{\text{loc}}^{1}(\mu)$ for which there exists a constant $\lambda>0$ such that
$$\frac{1}{\mu(Q)} \int_{Q} \Phi\left(k|f(x)|\right)d\mu <\infty, \quad \hbox{for some } k>0.$$
The corresponding Luxemburg norm is given by
$$\|f\|_{L_{\Phi}(Q,\mu/\mu(Q))} \coloneqq \|f\|_{\Phi(\mu),Q} \coloneqq \inf \left\{ \lambda>0: \frac{1}{|Q|} \int_{Q} \Phi\left(\frac{|f(x)|}{\lambda}\right) d\mu \leq 1\right\}.$$

For convenience, when \( \mu \) is the Lebesgue measure, we denote the norm simply as \( \|f\|_{\Phi,Q} \).
In particular, for some \( a \geq 0 \) and specific choices of \( \Phi \), the notation can be specialized as follows:
\begin{itemize}
	\item When \( \Phi(t) = t^p \log^a (e+t) \), we write \( \|f\|_{L^p(\log L)^a, Q} \).
	\item When \( \Phi(t) = \exp(t^a) - 1 \), we use \( \|f\|_{\exp(L^a), Q} \).
\end{itemize}

\subsection{Sparse family}\label{spare}
\ \\
\indent 
Given a cube \( Q_0 \subset \mathbb{R}^n \), let \( \mathcal{D}(Q_0) \) denote the collection of all dyadic subcubes of \( Q_0 \), i.e., the cubes obtained by iteratively subdividing \( Q_0 \) and each of its descendants into \( 2^n \) congruent subcubes.

\begin{definition}[\!\!\cite{ler6}]
	A dyadic lattice $\mathscr{D}$ in $\mathbb{R}^n$ is any collection of cubes such that 
	\begin{enumerate}
			\item if $Q\in \mathscr{D}$, then each descendant of $Q$ is in $\mathscr{D}$ as well;
			\item every two cubes $Q', Q'' \in \mathscr{D}$ have a common ancestor, that is, there exists $Q\in \mathscr{D}$ such that $Q', Q''\in \mathcal{D}(Q)$;
			\item for every compact set $K\subset \mathbb{R}^n$, there exists a cube $Q\in \mathscr{D}$ containing $K$. 
		\end{enumerate}
\end{definition}
In \cite[Theorem 3.1]{ler6}, Lerner and Nazarov established the following Three Lattice Theorem, which offers a clear understanding of the structure of dyadic lattices.
\begin{lemma}[\!\!\cite{ler6}]
	For every dyadic lattice $\mathcal{D}$, there exist $3^n$ dyadic lattices $\{\mathcal{D}_j\}_{j=1}^{3^n}$  such that
	$$	\{3 Q: Q \in \mathcal{D}\}=\bigcup_{j=1}^{3^n} \mathcal{D}_j	$$
	and for each cube $Q \in \mathcal{D}$, there is a unique cube $R_Q \in \mathcal{D}_{j}$ for some $j$ such that $Q \subseteq R_Q$ and $3 l_Q=l_{R_Q}$.
\end{lemma}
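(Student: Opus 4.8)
\medskip

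\noindent\textbf{Proof proposal.} The plan is to follow the structural route of Lerner and Nazarov: first describe the shape of an arbitrary dyadic lattice, then reduce to dimension one, and finally partition the dilated cubes by an explicit invariant modulo $3$.

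First I would extract from the three axioms the coarse structure of $\mathcal D$: any two of its cubes are nested or disjoint (pass to a common ancestor and use the usual dyadic dichotomy); after one dilation the side lengths occurring in $\mathcal D$ are exactly $\{2^k:k\in\mathbb Z\}$; for each $k$ the cubes of side $2^k$ tile $\mathbb R^n$, and the side-$2^{k-1}$ cubes are obtained from them by the standard bisection in each coordinate. The only freedom left is, when passing from generation $k$ to generation $k+1$, the choice in each coordinate of an offset $\epsilon_k\in\{0,1\}$ used to group consecutive cells in pairs. Hence $\mathcal D=\mathcal G_1\times\cdots\times\mathcal G_n$ is a product of one-dimensional dyadic grids, and since a product of dyadic lattices is a dyadic lattice and $3Q=3Q^{(1)}\times\cdots\times 3Q^{(n)}$, it suffices to treat $n=1$.

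So fix a one-dimensional grid $\mathcal G$ with offsets $\{\epsilon_k\}$ and index its cells $Q^k_j$ ($j\in\mathbb Z$) so that the two children of $Q^k_j$ are $Q^{k-1}_{2j-\epsilon_k}$ and $Q^{k-1}_{2j-\epsilon_k+1}$. A short endpoint computation shows that the two dyadic children of the dilated interval $3Q^k_j$ are again dilated intervals, namely $3Q^{k-1}_{2j-\epsilon_k-1}$ and $3Q^{k-1}_{2j-\epsilon_k+2}$; consequently every dyadic subcube of $3Q^k_j$ has the form $3Q^{k'}_{j'}$. The two child indices differ by $3$, so their residue modulo $3$ coincides, and solving the resulting recursion one obtains $a_k\in\{1,2\}$ and $b_k\in\{0,1,2\}$ with $a_k\equiv 2a_{k-1}$ and $b_k\equiv b_{k-1}-a_{k-1}(\epsilon_k+1)\pmod 3$, so that $g(3Q^k_j):=(a_kj+b_k)\bmod 3$ is constant along the parent--child relation among dilated intervals. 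Put $\mathcal G_c:=\{3Q^k_j: g(3Q^k_j)=c\}$ for $c\in\{0,1,2\}$. Then $\mathcal G_c$ is closed under dyadic descendants (the full descendant subtree of a cube shares its $g$-value, and a width-$3\cdot 2^k$ interval has exactly $2^i$ dyadic subintervals of width $3\cdot 2^{k-i}$, all obtained by iterating the child map); for each fixed $k$ the width-$3\cdot2^k$ cells of $\mathcal G_c$ form a single residue class of $j$ and hence tile $\mathbb R$, so cells of $\mathcal G_c$ are nested or disjoint. Finally, writing $\partial^{(k)}(\cdot)$ for the endpoint set of the generation-$k$ cells, one has $\partial^{(k+1)}\subseteq\partial^{(k)}$ for both $\mathcal D$ and $\mathcal G_c$ and $\partial^{(k)}(\mathcal G_c)\subseteq\partial^{(k)}(\mathcal D)$; reformulating axiom (3) for $\mathcal D$ as $\bigcap_k\partial^{(k)}(\mathcal D)=\emptyset$ (a compactness argument: a decreasing sequence of closed boundary sets with empty intersection meets a given bounded set in only finitely many members) gives $\bigcap_k\partial^{(k)}(\mathcal G_c)=\emptyset$, which yields axiom (3) for $\mathcal G_c$ and, in turn, axiom (2). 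Thus each $\mathcal G_c$ is a dyadic lattice, and taking coordinatewise products produces $3^n$ dyadic lattices $\mathcal D_{\vec c}=\prod_i\mathcal G_{c_i}$, $\vec c\in\{0,1,2\}^n$. Since $\vec c$ is determined by $3Q$ through the maps $g$, we get $\{3Q:Q\in\mathcal D\}=\bigsqcup_{\vec c}\mathcal D_{\vec c}$, and for each $Q$ the cube $R_Q:=3Q$ lies in exactly one $\mathcal D_{\vec c}$, has side $3\ell_Q$, and contains $Q$.

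The conceptual content is slim; the real work, and the main obstacle, is the bookkeeping in the first two steps: choosing a coherent index on the cells of a general --- not merely standard --- dyadic lattice so that the child map and the dilation both take the clean recursive form above, and carrying the offsets $\epsilon_k$ through the mod-$3$ recursion without losing track of the level-parity twist hidden in $a_k$. The one genuinely substantive point is the verification that each $\mathcal G_c$ "reaches infinity" (axioms (2)--(3)); this is handled cleanly by the inclusion $\partial^{(k)}(\mathcal G_c)\subseteq\partial^{(k)}(\mathcal D)$ together with the reformulation of axiom (3) in terms of the nested boundary sets.
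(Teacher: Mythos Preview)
The paper does not supply a proof of this lemma; it is quoted verbatim from Lerner and Nazarov \cite{ler6} (their Three Lattice Theorem) and used as a black box in the sparse-domination machinery. So there is no ``paper's own proof'' to compare against.

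That said, your argument is essentially the Lerner--Nazarov proof and is correct. The reduction to a product of one-dimensional grids is valid: once side lengths are normalised to $\{2^k\}$, the generation-$k$ cubes form a coset of $(2^k\mathbb Z)^n$, and the passage $k\to k+1$ is governed by an offset in $\{0,1\}^n$, so the lattice factors coordinatewise. In dimension one your child computation for $3Q^k_j$ is right (indices $2j-\epsilon_k-1$ and $2j-\epsilon_k+2$), and the resulting mod-$3$ invariant $g$ is constant along descendants; the three residue classes each tile $\mathbb R$ at every scale, and your boundary-set reformulation $\bigcap_k\partial^{(k)}(\mathcal D)=\emptyset$ of axiom~(3), together with $\partial^{(k)}(\mathcal G_c)\subseteq\partial^{(k)}(\mathcal D)$, cleanly yields axioms~(2)--(3) for each $\mathcal G_c$. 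One small notational slip: in the final product you should write $\mathcal D_{\vec c}=\prod_i(\mathcal G_i)_{c_i}$, since each coordinate has its own one-dimensional grid $\mathcal G_i$; and when you assert uniqueness of $R_Q$, note that in fact the three cubes $3Q_{j-1},3Q_j,3Q_{j+1}$ all contain $Q$ and have side $3\ell_Q$, but they land in distinct classes because $a_k$ is invertible mod $3$, so each $\mathcal D_{\vec c}$ contains exactly one such $R_Q$.
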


For a dyadic lattice $\mathscr{D}$, a family $\mathcal{S} \subset \mathscr{D}$ is called $\eta$-sparse, $0<\eta<1$, if for any cube $Q\in \mathcal{S}$,
$$\left|\bigcup_{Q'\in \mathcal{S}: Q'\subsetneq Q} Q' \right| \leq (1-\eta) |Q|.$$
In particular, if $\mathcal{S} \subset \mathscr{D}$ is $\eta$-sparse, then for any $Q\in \mathcal{S}$, we define
$$E_Q = Q\backslash \bigcup_{Q'\in \mathcal{S}: Q'\subsetneq Q} Q'.$$
It follows that  $|E_Q| \geq \eta |Q|$ and the sets $\{E_Q\}_{Q\in \mathcal{S}}$ are pairwise disjoint.

Let $\mathcal{D}$ be a dyadic lattice and $\mathcal{S}\subseteq \mathcal{D}$ be a $\eta$-sparse family, the sparse operator $\mathcal{T}_{r, \mathcal{S}}$ with $r>0$ is defined by
$$\mathcal{T}_{r, \mathcal{S}} f(x)= \sum_{Q \in \mathcal{S}}\langle|f|^{r}\rangle_{Q}^{1 / r}\chi_{Q}(x)=\sum_{Q \in \mathcal{S}}\left(\frac{1}{|Q|} \int_{Q}|f(y)|^{r} d y\right)^{\frac{1}{r}} \chi_{Q}(x).$$

\vspace{0.1cm}

\section{Proofs of Theorems \ref{thm1} and \ref{thm2}}\label{pr+1+2}
This section is dedicated to proving Theorems \ref{thm1} and \ref{thm2}. To prove Theorem \ref{thm1}, we first invoke the following lemma from \cite[Theorem 1.8]{bp+tan}.
\begin{lemma}[\!\!\cite{bp+tan}]\label{yinli3}
	Let $I\coloneqq\{i_1,\ldots,i_l\}=\{1,\ldots,l\}\subseteq \{1,\ldots,m\}.$  If $\vec{b} \in \mathrm{BMO}^l$ and $\mathcal{T}_{\vec{b}}$ satisfies the Hypothesis \ref{hyp}, then for any $0<p<\infty, w\in A_\infty$,
	\begin{equation}
		\int_{\mathbb{R}^n}\big|\mathcal{T}_{\vec{b}}(\vec{f})(x)\big|^pw(x)dx \lesssim \|\vec{b}\|_{\mathrm{BMO}}^p [w]_{A_\infty}^{pl+\max\{2,p\}}
		\int_{\mathbb{R}^n}\left|\mathscr{M}_{L(\log L)}(\vec{f})(x)\right|^pw(x)dx.
	\end{equation}
\end{lemma}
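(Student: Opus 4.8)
\textbf{Proof proposal for Theorem \ref{thm1}.}

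The plan is to combine the weighted Coifman--Fefferman inequality of Lemma \ref{yinli3} with the extrapolation machinery for Morrey--Banach spaces (Lemma \ref{ho4-thm3.1}), using the block space duality in \eqref{holder-M-B}--\eqref{mxu<sup<mxu} to track the dependence of the constant on $\|M\|_{\mathfrak{B}_{X',u}}$. The key point is that the hypotheses of Theorem \ref{thm1} match the hypotheses of Lemma \ref{ho4-thm3.1} with $q_0=p_0=p$: indeed $X'\in\mathbb{M}$ together with a duality/reiteration argument yields that $X^{1/p}$ (or rather a suitable power) is a Banach function space with $X^{1/q_0}\in\mathbb{M}'$, while $u\in\mathbb{W}_X^0$ supplies both the doubling condition $u(x,2r)\le Cu(x,r)$ and the summation condition $\sum_j \frac{\|\chi_{B(x,r)}\|_{X}}{\|\chi_{B(x,2^{j+1}r)}\|_{X}}u(x,2^{j+1}r)\le Cu(x,r)$ demanded in Lemma \ref{ho4-thm3.1} (here we use that $\mathbb{W}_X^0$ contains exactly the requirement with $\alpha=0$, i.e.\ with $2^{(j+1)\cdot 0}=1$).

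First I would set $f = (\mathscr{M}_{L(\log L)}(\vec f))$ aside and apply Lemma \ref{yinli3} with $p$ in place of its $p$: for every $w\in A_1\subseteq A_\infty$,
\begin{align*}
	\int_{\mathbb{R}^n} |\mathcal{T}_{\vec b}(\vec f)(x)|^p\, w(x)\,dx
	&\lesssim \|\vec b\|_{\mathrm{BMO}}^p\,[w]_{A_\infty}^{pl+\max\{2,p\}}\int_{\mathbb{R}^n} |\mathscr{M}_{L(\log L)}(\vec f)(x)|^p\, w(x)\,dx.
\end{align*}
Because we want the final constant expressed through $\|M\|_{\mathfrak{B}_{X',u}}$ rather than through an abstract extrapolation constant, I would not invoke Lemma \ref{ho4-thm3.1} as a black box but instead run the duality argument by hand. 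Fix a normalized block $b\in\mathfrak{b}_{X',u}$ and use the Rubio de Francia algorithm adapted to $\mathfrak{B}_{X',u}$: define $Rb \coloneqq \sum_{k\ge 0} \frac{M^k b}{2^k \|M\|_{\mathfrak{B}_{X',u}}^k}$, so that $b\le Rb$, $\|Rb\|_{\mathfrak{B}_{X',u}}\le 2\|b\|_{\mathfrak{B}_{X',u}}$, and $Rb\in A_1$ with $[Rb]_{A_1}\le 2\|M\|_{\mathfrak{B}_{X',u}}$ (the last fact uses Lemma \ref{the boundedness of the Hardy-Littlewood maximal operator on {B}_{X',u}}, which applies since $X'\in\mathbb{M}$ and $u\in\mathbb{W}_X^0$).

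Then I would estimate, using \eqref{mxu<sup<mxu}, Lemma \ref{yinli3} with $w=Rb$, and $[Rb]_{A_\infty}\le [Rb]_{A_1}\lesssim \|M\|_{\mathfrak{B}_{X',u}}$,
\begin{align*}
	\big\|(\mathcal{T}_{\vec b}(\vec f))^p\big\|_{M_X^u}
	&\lesssim \sup_{b\in\mathfrak{b}_{X',u}} \int_{\mathbb{R}^n} |\mathcal{T}_{\vec b}(\vec f)(x)|^p\, b(x)\,dx
	\;\le\; \sup_{b\in\mathfrak{b}_{X',u}} \int_{\mathbb{R}^n} |\mathcal{T}_{\vec b}(\vec f)(x)|^p\, Rb(x)\,dx\\
	&\lesssim \|\vec b\|_{\mathrm{BMO}}^p\, \|M\|_{\mathfrak{B}_{X',u}}^{pl+\max\{2,p\}}\, \sup_{b\in\mathfrak{b}_{X',u}}\int_{\mathbb{R}^n} |\mathscr{M}_{L(\log L)}(\vec f)(x)|^p\, Rb(x)\,dx\\
	&\lesssim \|\vec b\|_{\mathrm{BMO}}^p\, \|M\|_{\mathfrak{B}_{X',u}}^{pl+\max\{2,p\}}\, \sup_{b\in\mathfrak{b}_{X',u}}\|Rb\|_{\mathfrak{B}_{X',u}}\,\big\|(\mathscr{M}_{L(\log L)}(\vec f))^p\big\|_{M_X^u}\\
	&\lesssim \|\vec b\|_{\mathrm{BMO}}^p\, \|M\|_{\mathfrak{B}_{X',u}}^{pl+\max\{2,p\}}\,\big\|(\mathscr{M}_{L(\log L)}(\vec f))^p\big\|_{M_X^u},
\end{align*}
where the fourth line uses H\"older's inequality \eqref{holder-M-B} for $M_X^u$ and $\mathfrak{B}_{X',u}$ applied to the functions $(\mathscr{M}_{L(\log L)}(\vec f))^p\in M_X^u$ and $Rb\in\mathfrak{B}_{X',u}$. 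Along the way one checks the membership claim $|\mathcal{T}_{\vec b}(\vec f)|^p\in M_X^u$: this follows once the right-hand side is finite, combined with the Fatou-type monotonicity of the Banach function norm (item (3) of Definition \ref{def-bfs}) after a standard truncation/approximation of $\vec f$ by bounded compactly supported functions so that Hypothesis \ref{hyp} applies. I expect the main obstacle to be the bookkeeping of this last approximation step — verifying that the pointwise sparse bound of Hypothesis \ref{hyp}, stated only for bounded compactly supported $\vec f$, transfers to general $\vec f\in\mathcal{M}^m$ with $(\mathscr{M}_{L(\log L)}(\vec f))^p\in M_X^u$ — together with confirming that $\|M\|_{\mathfrak{B}_{X',u}}\ge 1$ so that the exponent $pl+\max\{2,p\}$ (rather than a smaller power) is the honest bound; both are routine but must be done carefully to get the stated clean inequality.
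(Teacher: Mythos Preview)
Your proposal is correct and follows essentially the same route as the paper: build the Rubio de Francia weight $\mathcal{R}b$ on $\mathfrak{B}_{X',u}$ (using Lemma \ref{the boundedness of the Hardy-Littlewood maximal operator on {B}_{X',u}}), apply Lemma \ref{yinli3} with $w=\mathcal{R}b$ and the bound $[\mathcal{R}b]_{A_\infty}\le[\mathcal{R}b]_{A_1}\le 2\|M\|_{\mathfrak{B}_{X',u}}$, and then close with the H\"older inequality \eqref{holder-M-B} and the block duality \eqref{mxu<sup<mxu}. Your opening paragraph's discussion of matching the hypotheses of Lemma \ref{ho4-thm3.1} is an unnecessary detour (as you yourself note, you never actually invoke it), and the paper likewise does not address the approximation issue you flag at the end; otherwise the arguments coincide.
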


\begin{proof}[\bf Proof of Theorem \ref{thm1}]
	Since \( X' \in \mathbb{M} \) and \( u \in \mathbb{W}_X^0 \), it follows from Theorem \ref{the boundedness of the Hardy-Littlewood maximal operator on {B}_{X',u}} that the Hardy-Littlewood maximal operator \( M \) is bounded on \( \mathfrak{B}_{X',u} \).
		Let \( \|M\|_{\mathfrak{B}_{X',u}} \) denote the operator norm of \( M: \mathfrak{B}_{X',u} \to \mathfrak{B}_{X',u} \).
		Next, we recall an operator from the Rubio de Francia algorithm \cite{rub} in the context of extrapolation theory. For any locally integrable function \( h \), define
	\[
	\mathcal{R}h(x) \coloneqq \sum_{k=0}^\infty \frac{M^k h(x)}{2^k \|M^k\|_{\mathfrak{B}_{X',u}}},
	\]
	where \( M^0 h \coloneqq |h| \) and \( M^k \) denotes the \( k \)-th iteration of \( M \).
		Whenever \( M: \mathfrak{B}_{X',u} \to \mathfrak{B}_{X',u} \) is bounded, the operator \( \mathcal{R}h \) is well defined. By its definition and Theorem \ref{the boundedness of the Hardy-Littlewood maximal operator on {B}_{X',u}}, \( \mathcal{R} \) satisfies the following properties.
	\begin{gather}
		0\leq |h(x)| \leq \mathcal{R}h(x),\label{h<Rh}\\
		\|\mathcal{R}h\|_{\mathfrak{B}_{X',u}} \leq 2\|h\|_{\mathfrak{B}_{X',u}},\label{Rh<h}\\
		\mathcal{R}h\in A_1 \  \text{and} \  [\mathcal{R}h]_{A_1} \leq 2\|M\|_{\mathfrak{B}_{X',u}}.\label{rh-a1}
	\end{gather}
	For any $h\in \mathfrak{b}_{X',u}$, take $h_1=h$, $\lambda_1=1$ and $\lambda_k=0$ for $k\geq 2$, we have $h=\sum_{k=1}^\infty \lambda_k h_k \in \mathfrak{B}_{X',u}$ and 
	\begin{equation}
	\begin{aligned}\label{h<1}
		\|h\|_{\mathfrak{B}_{X',u}} 
		&= \inf\left\{\sum_{k=1}^\infty |\lambda_k|: ~h=\sum_{k=1}^\infty \lambda_k h_k \ \text{a.e.}\right\} \\
		&\leq \inf\left\{\sum_{k=1}^\infty |\lambda_k|: ~h=\sum_{k=1}^\infty \lambda_k h_k \ \text{with} \ \lambda_1=1, h_1=h \ \text{and} \ \lambda_k=0\ \text{for} \ k\geq 2\right\} \\
		&\leq 1.
	\end{aligned}
	\end{equation}
	Moreover, \eqref{holder-M-B}, \eqref{Rh<h} and \eqref{h<1} guarantee that
	\begin{equation}
		\begin{aligned}\label{m-h-l}
		\int_{\mathbb{R}^{n}} \left|\mathscr{M}_{L(\log L)} (\vec{f})(x)\right|^p \mathcal{R}h(x) dx &\leq C \left\|\left(\mathscr{M}_{L(\log L)} (\vec{f})\right)^p\right\|_{M_X^u} \|\mathcal{R}h\|_{\mathfrak{B}_{X',u}}\\
		&\leq C \left\|\left(\mathscr{M}_{L(\log L)} (\vec{f})\right)^p\right\|_{M_X^u} \|h\|_{\mathfrak{B}_{X',u}}\\
		&\leq C \left\|\left(\mathscr{M}_{L(\log L)} (\vec{f})\right)^p\right\|_{M_X^u}.
	\end{aligned}
	\end{equation}
	Therefore, if $\left(\mathscr{M}_{L(\log L)} (\vec{f})\right)^p \in M_X^u$, then we have $\left(\mathscr{M}_{L(\log L)} (\vec{f})\right)^p \in \bigcap_{h\in \mathfrak{b}_{X',u}} L^1(\mathcal{R}h)$. 
	
	Note that \eqref{rh-a1}, Lemma \ref{yinli3} and the fact $\mathcal{R}h\in A_1\subseteq A_\infty$ assure that for any $h\in \mathfrak{b}_{X',u}$,
	\begin{align*}
		\int_{\mathbb{R}^{n}} \left|\mathcal{T}_{\vec{b}}(\vec{f})(x) \right|^p \mathcal{R}h(x) dx \lesssim \|\vec{b}\|_{\text{BMO}}^p \|M\|_{\mathfrak{B}_{X',u}}^{pl+\max\{2,p\}} \int_{\mathbb{R}^n} \left(\mathscr{M}_{L(\log L)} (\vec{f})(x)\right)^p \mathcal{R}h(x)dx
	\end{align*}
	holds for those $\vec{f}\in \mathcal{M}^m$ such that $\left(\mathscr{M}_{L(\log L)} (\vec{f})\right)^p \in M_X^u$. Thus, applying \eqref{h<Rh}, one may get
	\begin{align*}
		\int_{\mathbb{R}^{n}} \left|\mathcal{T}_{\vec{b}}(\vec{f})(x) \right|^p |h(x)| dx
		&\leq \int_{\mathbb{R}^{n}} \left(\mathcal{T}_{\vec{b}}(\vec{f})(x) \right)^p \mathcal{R}h(x) dx \\
		&\lesssim \|\vec{b}\|_{\text{BMO}}^p \|M\|_{\mathfrak{B}_{X',u}}^{pl+\max\{2,p\}} \int_{\mathbb{R}^n} \left(\mathscr{M}_{L(\log L)} (\vec{f})(x)\right)^p \mathcal{R}h(x)dx.
	\end{align*}
	Combined with \eqref{m-h-l}, we obtain
	\begin{align*}
		\int_{\mathbb{R}^{n}} \left|\mathcal{T}_{\vec{b}}(\vec{f})(x) \right|^p |h(x)| dx
		&\lesssim \|\vec{b}\|_{\text{BMO}}^p \|M\|_{\mathfrak{B}_{X',u}}^{pl+\max\{2,p\}} \left\|\left(\mathscr{M}_{L(\log L)} (\vec{f})\right)^p\right\|_{M_X^u}.
	\end{align*}
	Therefore, \eqref{mxu<sup<mxu} gives that
	\begin{align*}
		\left\|\left(\mathcal{T}_{\vec{b}}(\vec{f})\right)^p \right\|_{M_X^u}
		&\lesssim \sup_{h\in \mathfrak{b}_{X',u}} \int_{\mathbb{R}^n} \left|\mathcal{T}_{\vec{b}}(\vec{f})\right|^p |h(x)|dx\\
		&\lesssim \|\vec{b}\|_{\text{BMO}}^p \|M\|_{\mathfrak{B}_{X',u}}^{pl+\max\{2,p\}} \left\|\left(\mathscr{M}_{L(\log L)} (\vec{f})\right)^p\right\|_{M_X^u}.
	\end{align*}
\end{proof}

The remainder of this section is dedicated to demonstrating Theorem \ref{thm2}.
\begin{proof}[\bf Proof of Theorem \ref{thm2}]
	Note that for any Banach function space \( X \), the space \( X^p \) (for \( p \geq 1 \)) is also a Banach function space. Indeed, for \( X^p \) with \( p \geq 1 \), it is straightforward to verify that conditions (1) and (2) in Definition \ref{def-bfs} hold. 
	
	If \( 0 \leq f_n \uparrow f \) almost everywhere, then \( |f_n|^p \uparrow |f|^p \). Moreover, if \( |f_n|^p, |f|^p \in X \), then  
	\[
	\||f_n|^p\|_X \uparrow \||f|^p\|_X.
	\]  
	Thus, we obtain  
	\[
	\|f_n\|_{X^p} = \||f_n|^p\|_X^{\frac{1}{p}} \uparrow \||f|^p\|_X^{\frac{1}{p}} = \|f\|_{X^p},
	\]  
	which establishes condition (3).  
	
	For condition (4), observe that \( \chi_E \in X \) and  
	\[
	\|\chi_E\|_{X^p} = \|\chi_E\|_X^{\frac{1}{p}} < \infty,
	\]  
	implying that \( \chi_E \in X^p \).  
	
	Regarding condition (5), for any \( f \in X^p \), the case \( p = 1 \) is trivial. If \( p > 1 \), H\"older inequality gives  
	\[
	\int_E |f(x)|dx \leq \left(\int_E |f(x)|^p dx\right)^\frac{1}{p} |E|^\frac{1}{p'}.
	\]  
	Since \( |f|^p \in X \), there exists a constant \( C_E \) such that  
	\[
	\int_E |f(x)|^p dx \leq C_E \||f|^p\|_X.
	\]  
	Thus, we obtain  
	\[
	\int_E |f(x)|dx \leq C_E^\frac{1}{p} |E|^\frac{1}{p'} \|f\|_{X^p},
	\]  
	which completes the proof.
	
	Next we turn to prove that if $u^\frac{1}{p} \in \mathbb{W}_{X^p}^0$, then $u\in \mathbb{W}_X^0$. According to Definition \ref{def-wxalpha}, for any Lebesgue measurable function $u^\frac{1}{p} \in \mathbb{W}_{X^p}^0$ satisfying $u(x_1,r_1)\leq u(x_2,r_2)$, there exists a constant $C>0$ such that 
	\begin{align*}
		\frac{\|\chi_{B(x_1,r_1)}\|_{X}}{u(x_1,r_1)} 
		&= \left(\frac{\|\chi_{B(x_1,r_1)}\|_{X}^\frac{1}{p}}{u^\frac{1}{p} (x_1,r_1)}\right)^p 
		= \left(\frac{\|\chi_{B(x_1,r_1)}\|_{X^p}}{u^\frac{1}{p} (x_1,r_1)}\right)^p\\
		&\leq C \left(\frac{\|\chi_{B(x_2,r_2)}\|_{X^p}}{u^\frac{1}{p} (x_2,r_2)}\right)^p
		=\frac{\|\chi_{B(x_2,r_2)}\|_{X}}{u(x_2,r_2)},
	\end{align*}
	and 
	\begin{align*}
		\sum_{j=0}^{\infty} \frac{\|\chi_{B(x,r)}\|_{X}}{\|\chi_{B(x,2^{j+1}r)}\|_{X}} u(x,2^{j+1}r) 
		&= \sum_{j=0}^{\infty} \left[\frac{\|\chi_{B(x,r)}\|_{X^p}}{\|\chi_{B(x,2^{j+1}r)}\|_{X^p}} u^\frac{1}{p}(x,2^{j+1}r)\right]^p\\
		&\leq \left(\sum_{j=0}^{\infty} \frac{\|\chi_{B(x,r)}\|_{X^p}}{\|\chi_{B(x,2^{j+1}r)}\|_{X^p}} u^\frac{1}{p}(x,2^{j+1}r)\right)^p\\
		&\leq C u(x,r),
	\end{align*}
	which guarantees that $u\in \mathbb{W}_X^0$. Notice that $X'\in \mathbb{M}$, Theorem \ref{thm1} tells us 
	\begin{align}\label{eq-1}
		\left\| (\mathcal{T}_{\vec{b}} (\vec{f}))^p \right\|_{M_{X}^{u}} \lesssim \|\vec{b}\|_{\text{BMO}}^p \left\|M\right\|_{\mathfrak{B}_{X',u}}^{pl+\max\{2,p\}} \left\|\left(\mathscr{M}_{L(\log L)} (\vec{f}) \right)^p \right\|_{M_{X}^u}.
	\end{align}
A straightforward calculation shows that  $$\mathscr{M}_{L(\log L)} (\vec{f})(x) \leq C_n \mathscr{M}\left(Mf_1,\dots,Mf_m\right)(x)$$ and $\|g\|_{M_{X^p}^{u^{{1}/{p}}}}^p = \||g|^p\|_{M_X^u}$ for any $g\in M_{X^p}^{u^{{1}/{p}}}$. Invoking \eqref{eq-1} and Theorem \ref{thm-multi-maximal-bounded-mxu}, we obtain 
	\begin{align}\label{thm2-eq3}
		\left\| (\mathcal{T}_{\vec{b}} (\vec{f}))^p \right\|_{M_{X}^{u}} 
		\lesssim \|\vec{b}\|_{\text{BMO}}^p 
		\left\|M\right\|_{\mathfrak{B}_{X',u}}^{pl+\max\{2,p\}} 
		\left\|\mathscr{M}\right\|_{M_{X_1^p}^{u_1^{1/p}} \times \cdots \times M_{X_m^p}^{u_m^{1/p}} \to M_{X^p}^{u^{1/p}}}^p 
		\prod_{i=1}^m \|Mf_i\|_{M_{X_i^p}^{u_i^{1/p}}}^p.
	\end{align}
	According to Remark \ref{wxi0}, it suffices to deal with $\|Mf_i\|_{M_{X_i^p}^{u_i^{1/p}}}$ for $i=1,\dots,m$ in the following two cases.\\
	(1) For $p=1$ and $X_i \in \mathbb{M}, i=1,\dots,m$, utilize Theorem \ref{thm-multi-maximal-bounded-mxu} with $m=1$ and $u_i\in \mathbb{W}_{X_i}^0$ to estimate
	$$\|Mf_i\|_{M_{X_i}^{u_i}} \leq \|M\|_{M_{X_i}^{u_i} \to M_{X_i}^{u_i}} \|f_i\|_{M_{X_i}^{u_i}}.$$
	Thus, by \eqref{thm2-eq3}, we have
	$$\left\| \mathcal{T}_{\vec{b}} (\vec{f}) \right\|_{M_{X}^{u}}
	\lesssim \|\vec{b}\|_{\text{BMO}} 
	\left\|M\right\|_{\mathfrak{B}_{X',u}}^{l+2} 
	\left\|\mathscr{M}\right\|_{M_{X_1}^{u_1} \times \cdots \times M_{X_m}^{u_m} \to M_{X}^{u}}
	\prod_{i=1}^m \|M\|_{M_{X_i}^{u_i} \to M_{X_i}^{u_i}} \|f_i\|_{M_{X_i}^{u_i}}.$$
	(2) For $p>1$, notice that $X_i^p, i=1,\dots,m$, are Banach function spaces and $u_i^{\frac{1}{p}} \in \mathbb{W}_{X_i^p}^0$. Thus, $(X_i^p)^\frac{1}{p} = X_i \in \mathbb{M}'$ and $u_i\in \mathbb{W}_{X_i}^0$. Using Lemma \ref{ho4-thm3.1} with $f=Mf_{i}, g=f_{i}$ and $p_0=q_0=p>1$, we obtain
	\begin{align*}
		\|Mf_i\|_{M_{X_i^p}^{u_i^{1/p}}} \leq C\|f_i\|_{M_{X_i^p}^{u_i^{1/p}}} = C\||f_i|^p\|_{M_{X_i}^{u_i}}^\frac{1}{p}.
	\end{align*}
	Therefore, by \eqref{thm2-eq3}, we have
	$$\left\| (\mathcal{T}_{\vec{b}} (\vec{f}))^p \right\|_{M_{X}^{u}}
	\lesssim \|\vec{b}\|_{\text{BMO}}^p
	\left\|M\right\|_{\mathfrak{B}_{X',u}}^{pl+\max\{2,p\}} 
	\left\|\mathscr{M}\right\|_{M_{X_1^p}^{u_1^{1/p}} \times \cdots \times M_{X_m^p}^{u_m^{1/p}} \to M_{X^p}^{u^{1/p}}}^p
	\prod_{i=1}^m \||f_i|^p\|_{M_{X_i}^{u_i}}.$$
	Taking (1) and (2) into account completes the proof of Theorem \ref{thm2}.
\end{proof}

Unlike the previous proof, in what follows, we do not rely on Rubio de Francia extrapolation. Instead, we estimate the sparse operators directly to provide an alternative proof of Theorem \ref{thm2}.
\begin{proof}[Another proof of Theorem \ref{thm2}]
	Based on \eqref{mxu<sup<mxu} and Hypothesis \ref{hyp}, we have 
	\begin{equation}
		\begin{aligned}\label{eq2-6}
		\left\|\mathcal{T}_{\vec{b}}(\vec{f})\right\|_{M_{X}^{u}} &\leq \frac{1}{C_{0}} \sup_{h\in \mathfrak{b}_{X',u}} \int_{\rn} \left|\mathcal{T}_{\vec{b}}(\vec{f})(x) h(x)\right| dx\\
		&\lesssim \sum_{j=1}^{3^n} \sum_{\vec{\gamma} \in \{1,2\}^l} \sup_{h\in \mathfrak{b}_{X',u}} \int_{\rn}  \mathcal{T}_{\mathcal{S}_j,\vec{b}}^{\vec{\gamma}} (\vec{f})(x) |h(x)| dx\\
		&\eqqcolon \sum_{j=1}^{3^n} \sum_{\vec{\gamma} \in \{1,2\}^l} \mathcal{H}_{\vec{\gamma},j}(\vec{f}),
	\end{aligned}
	\end{equation}
	where $$\mathcal{H}_{\vec{\gamma},j}(\vec{f}) = \sup_{h\in \mathfrak{b}_{X',u}} \int_{\rn}  \mathcal{T}_{\mathcal{S}_j,\vec{b}}^{\vec{\gamma}} (\vec{f})(x) |h(x)| dx.$$
	Fixed $j\in \{1,\dots,3^{n}\}$, without loss of generality, let $\vec{\gamma} = (\overbrace{ 1,\dots,1 }^{l_1}, \overbrace{ 2,\dots,2 }^{l-l_1})$ with $0\leq l_{1} \leq l$. We divide the proof into three parts.
	
	\noindent (1) For $l_{1}=0$, we have $\vec{\gamma}=(2,\dots,2)$. Thus, 
	\begin{equation}
		\begin{aligned}\label{eq2-1}
		\mathcal{H}_{\vec{2},j}(\vec{f}) \coloneqq \mathcal{H}_{\vec{\gamma},j}(\vec{f}) 
		&\lesssim  \sup_{h\in \mathfrak{b}_{X',u}} \int_{\rn} \sum_{Q \in \mathcal{S}_j} \prod_{i=1}^{l} \langle |(b_{i} - \langle b_{i} \rangle_{Q}) f_{i} | \rangle  
		\prod_{i=l+1}^{m} \langle |f_{i}| \rangle_{Q} \chi_{Q}(x) |h(x)| dx\\
		&= \sup_{h\in \mathfrak{b}_{X',u}} \sum_{Q \in \mathcal{S}_j} \prod_{i=1}^{l} \langle |(b_{i}-\langle b_{i}\rangle_{Q}) f_{i}| \rangle_{Q} 
		\prod_{i=l+1}^{m} \langle |f_{i}| \rangle_{Q} \langle |h|\rangle_{Q} |Q|.
	\end{aligned}
	\end{equation}
	By iteration argument and \cite[Lemma 5.1]{ler1}, it is known that there exists a sparse family $\mathcal{S}_{j}^{\ast} \subset \mathscr{D}$ such that for any $1\leq i\leq l$, 
	$$|b_{i}(x) - \langle b_{i}\rangle_{Q}| \leq 2^{n+2} \sum_{R \in \mathcal{S}_{j}^{\ast}, R \subseteq Q} \langle\left|b_{i}-{\langle b_{i} \rangle}_{R}\right|\rangle_{R} \chi_{R}(x).$$
	Hence, 
	$$\frac{1}{|Q|} \int_{Q} |b_{i}(x)-\langle b_{i}\rangle_{Q}| |f_{i}(x)| dx \leq C_{n} \|b_{i}\|_{\text{BMO}} \frac{1}{|Q|} \int_{Q} \mathcal{T}_{\mathcal{S}_{j}^{\ast}}(f_{i})(x) dx,$$
	where $\mathcal{T}_{\mathcal{S}_{j}^{\ast}}$ denotes the classical sparse operator. Utilizing \eqref{eq2-1} and Carleson embedding theorem, we have
	\begin{align*}
		\mathcal{H}_{\vec{2},j}(\vec{f}) 
		&\lesssim \sup_{h\in \mathfrak{b}_{X',u}} 
		\prod_{i=1}^{l} \|b_{i}\|_{\text{BMO}} 
		\sum_{Q \in \mathcal{S}_j} 
		\prod_{i=1}^{l} \langle\mathcal{T}_{\mathcal{S}_{j}^{\ast}} (f_{i})\rangle_{Q} 
		\prod_{i=l+1}^{m} \langle|f_{i}|\rangle_{Q} \langle|h|\rangle_{Q} |Q|\\
		&\lesssim \prod_{i=1}^{l} \|b_{i}\|_{\text{BMO}}
		\sup_{h\in \mathfrak{b}_{X',u}} \int_{\rn} \mathscr{M}(\vec{f}_{0})(x) Mh(x) dx,
	\end{align*}
	where $\vec{f}_{0} \coloneqq (\mathcal{T}_{\mathcal{S}_{j}^{\ast}}(f_{1}), \dots, \mathcal{T}_{\mathcal{S}_{j}^{\ast}}(f_{l}), f_{l+1}, \dots,f_{m})$. Then the H\"older inequality \eqref{holder-M-B}, combined with Theorem \ref{the boundedness of the Hardy-Littlewood maximal operator on {B}_{X',u}}, leads to
	\begin{equation}
		\begin{aligned}\label{eq2-2}
		\mathcal{H}_{\vec{2},j}(\vec{f}) 
		&\lesssim \prod_{i=1}^{l} \|b_{i}\|_{\text{BMO}} \|\mathscr{M}(\vec{f}_{0})\|_{M_{X}^{u}}
		\sup_{h\in \mathfrak{b}_{X',u}} \|Mh\|_{\mathfrak{B}_{X',u}}\\
		&\leq \prod_{i=1}^{l} \|b_{i}\|_{\text{BMO}} \|\mathscr{M}(\vec{f}_{0})\|_{M_{X}^{u}} 
		\|M\|_{\mathfrak{B}_{X',u}}
		\sup_{h\in \mathfrak{b}_{X',u}} \|h\|_{\mathfrak{B}_{X',u}}\\
		&= \|\vec{b}\|_{\text{BMO}} \|M\|_{\mathfrak{B}_{X',u}} \|\mathscr{M}(\vec{f}_{0})\|_{M_{X}^{u}}.
	\end{aligned}
	\end{equation}
Note that $\mathscr{M}$ is bound from $M_{X_{1}}^{u_{1}} \times \cdots \times M_{X_{m}}^{u_{m}}$ to $M_{X}^{u}$, therefore, 
	\begin{align}\label{eq2-3}
		\|\mathscr{M}(\vec{f}_{0})\|_{M_{X}^{u}} \leq \left\|\mathscr{M}\right\|_{M_{X_{1}}^{u_{1}} \times \cdots \times M_{X_{m}}^{u_{m}} \to M_{X}^{u}}
		\prod_{i=1}^{l} \left\|\mathcal{T}_{\mathcal{S}_{j}^{\ast}} (f_{i}) \right\|_{M_{X_{i}}^{u_{i}}} 
		\prod_{i=l+1}^{m} \|f_{i}\|_{M_{X_{i}}^{u_{i}}}.
	\end{align}
To proceed with the proof, we first establish the following claim, whose proof will be postponed until the end of the proof of Theorem \ref{thm2}.
	\begin{claim}\label{claim1}
		Let $\mathcal{S}$ be any sparse family, $u\in \mathbb{W}_{X}^{0}, X\in \mathbb{M}$ and $X'\in \mathbb{M}$. Then $\mathcal{T}_{\mathcal{S}}(f)$ satisfies
		$$\|\mathcal{T}_{\mathcal{S}}(f)\|_{M_{X}^{u}} \lesssim \|M\|_{M_{X}^{u}} \|M\|_{\mathfrak{B}_{X',u}} \|f\|_{M_{X,u}}.$$
	\end{claim}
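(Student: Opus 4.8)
The plan is to dominate $\mathcal{T}_{\mathcal{S}}(f)$ pointwise (or in a testing sense against block functions) by the Hardy--Littlewood maximal operator applied to $f$, and then feed this into the already-established boundedness of $M$ on $M_X^u$ (Remark \ref{wxi0}) together with the H\"older inequality \eqref{holder-M-B} and the boundedness of $M$ on $\mathfrak{B}_{X',u}$ (Lemma \ref{the boundedness of the Hardy-Littlewood maximal operator on {B}_{X',u}}). Concretely, I would first use the duality formula \eqref{mxu<sup<mxu}, writing
$$\|\mathcal{T}_{\mathcal{S}}(f)\|_{M_X^u} \lesssim \sup_{h\in\mathfrak{b}_{X',u}} \int_{\rn} \mathcal{T}_{\mathcal{S}}(f)(x)\,|h(x)|\,dx = \sup_{h\in\mathfrak{b}_{X',u}} \sum_{Q\in\mathcal{S}} \langle|f|\rangle_Q\,\langle|h|\rangle_Q\,|Q|.$$

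Next I would exploit sparseness: since $\mathcal{S}$ is $\eta$-sparse, each $Q\in\mathcal{S}$ carries a set $E_Q\subseteq Q$ with $|E_Q|\geq\eta|Q|$ and the $\{E_Q\}$ pairwise disjoint, so $|Q|\leq\eta^{-1}|E_Q|$ and $\langle|f|\rangle_Q\leq\inf_{x\in Q}Mf(x)$, $\langle|h|\rangle_Q\leq\inf_{x\in Q}Mh(x)$. Hence
$$\sum_{Q\in\mathcal{S}} \langle|f|\rangle_Q\,\langle|h|\rangle_Q\,|Q| \leq \eta^{-1}\sum_{Q\in\mathcal{S}}\int_{E_Q} Mf(x)\,Mh(x)\,dx \leq \eta^{-1}\int_{\rn} Mf(x)\,Mh(x)\,dx.$$
Applying \eqref{holder-M-B} gives $\int_{\rn}Mf\cdot Mh \lesssim \|Mf\|_{M_X^u}\|Mh\|_{\mathfrak{B}_{X',u}}$. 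Since $X\in\mathbb{M}$ and $u\in\mathbb{W}_X^0$, Remark \ref{wxi0} yields $\|Mf\|_{M_X^u}\lesssim\|M\|_{M_X^u}\|f\|_{M_X^u}$; since $X'\in\mathbb{M}$ and $u\in\mathbb{W}_X^0$, Lemma \ref{the boundedness of the Hardy-Littlewood maximal operator on {B}_{X',u}} gives $\|Mh\|_{\mathfrak{B}_{X',u}}\leq\|M\|_{\mathfrak{B}_{X',u}}\|h\|_{\mathfrak{B}_{X',u}}\leq\|M\|_{\mathfrak{B}_{X',u}}$ (using $\|h\|_{\mathfrak{B}_{X',u}}\leq1$ as in \eqref{h<1}). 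Combining these estimates and taking the supremum over $h\in\mathfrak{b}_{X',u}$ produces exactly
$$\|\mathcal{T}_{\mathcal{S}}(f)\|_{M_X^u} \lesssim \|M\|_{M_X^u}\,\|M\|_{\mathfrak{B}_{X',u}}\,\|f\|_{M_X^u}.$$

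I do not anticipate a genuine obstacle here; the argument is a routine assembly of tools already available in the excerpt. The only point requiring mild care is the bookkeeping of where each hypothesis is used: the sparse-to-maximal reduction needs only $\eta$-sparseness, the $\|Mf\|_{M_X^u}$ bound needs $X\in\mathbb{M}$ together with $u\in\mathbb{W}_X^0$, and the dual-side $\|Mh\|_{\mathfrak{B}_{X',u}}$ bound needs $X'\in\mathbb{M}$ together with $u\in\mathbb{W}_X^0$; the constant depends on $\eta$ (which is harmless, being an absolute feature of the sparse family) and on $n$ through the duality constants $C_0,C_1$ in \eqref{mxu<sup<mxu}. An alternative route, slightly shorter, is to invoke directly the known pointwise-testing domination $\mathcal{T}_{\mathcal{S}}(f)(x)\lesssim$ (a constant times) iterated averages and then use Remark \ref{wxi0} twice, but the duality argument above keeps the dependence on $\|M\|_{M_X^u}$ and $\|M\|_{\mathfrak{B}_{X',u}}$ explicit, which is what the statement of the claim demands.
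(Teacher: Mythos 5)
Your argument coincides with the paper's own proof of Claim \ref{claim1}: duality via \eqref{mxu<sup<mxu}, the sparse reduction $\sum_{Q}\langle|f|\rangle_Q\langle|h|\rangle_Q|Q|\lesssim\int_{\mathbb{R}^n}Mf\,Mh\,dx$ using the disjoint sets $E_Q$, then \eqref{holder-M-B} together with the boundedness of $M$ on $M_X^u$ and on $\mathfrak{B}_{X',u}$. The proposal is correct and essentially identical to the paper's argument, including the bookkeeping of which hypothesis is used where.
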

In fact, combining \eqref{eq2-2}, \eqref{eq2-3}, and Claim \ref{claim1} yields
	$$\mathcal{H}_{\vec{2},j}(\vec{f}) \lesssim \|\vec{b}\|_{\text{BMO}} \|M\|_{\mathfrak{B}_{X',u}} \|\mathscr{M}\|_{M_{X_{1}}^{u_{1}} \times \cdots \times M_{X_{m}}^{u_{m}} \to M_{X}^{u}} 
	\left(\prod_{i=1}^{l} \|M\|_{\mathfrak{B}_{X_{i}',u_{i}}} \|M\|_{M_{X_{i}}^{u_{i}}}\right)
	\prod_{i=1}^{m} \|f_{i}\|_{M_{X_{i}}^{u_{i}}}.$$

    \noindent (2) For $l_{1}=l$, we have $\vec{\gamma}=(1,\dots,1)$. Thus, 
    \begin{equation}
	\begin{aligned}\label{eq2-4}
	    \mathcal{H}_{\vec{1},j}(\vec{f}) \coloneqq \mathcal{H}_{\vec{\gamma},j}(\vec{f}) &\lesssim \sup_{h\in \mathfrak{b}_{X',u}} \int_{\rn} \sum_{Q \in \mathcal{S}_j}
	    \prod_{i=1}^{l} |b_{i}(x)-\langle b_{i}\rangle_{Q}| \prod_{i=1}^{m} \langle|f_{i}|\rangle_{Q} \chi_{Q}(x) |h(x)| dx\\
	    &= \sup_{h\in \mathfrak{b}_{X',u}}
	    \sum_{Q \in \mathcal{S}_j} 
	    \Big\langle \prod_{i=1}^{l} |b_{i}(x)-\langle b_{i}\rangle_{Q} | |h(x)|\Big\rangle_{Q}
	    \prod_{i=1}^{m} \langle|f_{i}|\rangle_{Q} 
	    |Q|.
    \end{aligned}
    \end{equation}
    Invoking the generalized H\"older inequality (\!\!\cite[Lemma 4.1]{bp+tan}) yields that 
    $$\frac{1}{|Q|} \int_{Q} \prod_{i=1}^{l} |b_{i}(x) - \langle b_{i}\rangle_{Q}| |h(x)| dx \lesssim \prod_{i=1}^{l} \|b_{i}-\langle b_{i}\rangle_{Q}\|_{\exp{L},Q} \|h\|_{L(\log L)^{l},Q}.$$
   Noting that \( \|b_{i}-\langle b_{i}\rangle_{Q}\|_{\exp{L},Q} \lesssim \|b_{i}\|_{\text{BMO}} \) for \( i=1,\dots,l \), we can deduce from \eqref{eq2-4} that
   
    \begin{align*}
	    \mathcal{H}_{\vec{1},j}(\vec{f}) 
	    &\lesssim \prod_{i=1}^{l} \|b_{i}\|_{\text{BMO}} \sup_{h\in \mathfrak{b}_{X',u}} \sum_{Q \in \mathcal{S}_j} \prod_{i=1}^{m} \langle |f_{i}|\rangle_{Q} \|h\|_{L(\log L)^{l},Q} |Q|\\
	    &\lesssim \|\vec{b}\|_{\text{BMO}}
	    \sup_{h\in \mathfrak{b}_{X',u}} \sum_{Q \in \mathcal{S}_j} \inf_{x\in Q} \mathscr{M}(\vec{f})(x) 
	    \inf_{x\in Q} M_{L(\log L)^{l}} h(x) |E_{Q}|\\
	    &\lesssim \|\vec{b}\|_{\text{BMO}} \sup_{h\in \mathfrak{b}_{X',u}} \int_{\rn} \mathscr{M}(\vec{f})(x) M_{L(\log L)^{l}} h(x) dx,
    \end{align*}
    where the sets $\{E_{Q}\}$ are pairwise disjoint and $|E_{Q}| \geq \eta |Q|$ with $\eta$ a constant only depends on $n$. 
    Let $M^{l+1}$ denote the $(l+1)$-th composition of the Hardy-Littlewood maximal operator $M$, then $M_{L(\log L)^{l}} g(x) \simeq M^{l+1} g(x)$. This fact yields that 
    \begin{align*}
	    \mathcal{H}_{\vec{1},j}(\vec{f}) &\lesssim \|\vec{b}\|_{\text{BMO}} \|\mathscr{M}(\vec{f})\|_{M_{X}^{u}} 
	    \sup_{h\in \mathfrak{b}_{X',u}} \|M^{l+1}h\|_{\mathfrak{B}_{X',u}}\\
	    & \lesssim \|\vec{b}\|_{\text{BMO}} \|M\|_{\mathfrak{B}_{X',u}}^{l+1} \|\mathscr{M}\|_{M_{X_{1}}^{u_{1}} \times \cdots \times M_{X_{m}}^{u_{m}} \to M_{X}^{u}} \prod_{i=1}^{m} \|f_{i}\|_{M_{X_{i}}^{u_{i}}}.
    \end{align*}
    
    \noindent (3) For $0<l_{1}<l$, it is obvious that 
    \begin{align*}
	    \mathcal{H}_{\overrightarrow{l_{1}},j}(\vec{f}) \coloneqq \mathcal{H}_{\vec{\gamma},j}(\vec{f}) 
	    &\lesssim \sup_{h\in \mathfrak{b}_{X',u}} \int_{\rn} \sum_{Q \in \mathcal{S}_j} \prod_{i=1}^{l_{1}} \langle|f_{i}|\rangle_{Q}
	    \prod_{i=l_{1}+1}^{l} \langle |(b_{i}-\langle b_{i}\rangle_{Q}) f_{i}| \rangle_{Q}\\
	    &\qquad \qquad \times \prod_{i=l+1}^{m} \langle |f_{i}|\rangle_{Q}  \prod_{i=1}^{l_{1}} |b_{i}(x) -\langle b_{i} \rangle_{Q}| \chi_{Q}(x) |h(x)| dx\\
	    &=\sup_{h\in \mathfrak{b}_{X',u}} \sum_{Q \in \mathcal{S}_j} \int_{Q} \prod_{i=1}^{l_{1}} |b_{i}(x)-\langle b_{i}\rangle_{Q}| |h(x)| dx 
	    \prod_{i\notin \{l_{1}+1, \dots,l\}} \langle|f_{i}|\rangle_{Q}\\
	    &\qquad \qquad \times \prod_{i=l_{1}+1}^{l} \langle |(b_{i}-\langle b_{i}\rangle_{Q}) f_{i}|\rangle_{Q}.
    \end{align*}
  
  In view of the inequality  $\frac{1}{|Q|} \int_{Q} |b_{i}(x) -\langle b_{i}\rangle_{Q}| |f_{i}(x)| dx \lesssim \|b_{i}-\langle b_{i}\rangle_{Q}\|_{\exp{L},Q} \|f_{i}\|_{L(\log L),Q}$, we obtain 
    \begin{align*}
	    \mathcal{H}_{\overrightarrow{l_{1}},j}(\vec{f}) &\lesssim \|\vec{b}\|_{\text{BMO}} 
	    \sup_{h\in \mathfrak{b}_{X',u}} \sum_{Q \in \mathcal{S}_j} \prod_{i\notin \{l_{1}+1, \dots,m\}} \langle|f_{i}|\rangle_{Q} 
	    \prod_{i=l_{1}+1}^{l} \|f_{i}\|_{L(\log L),Q} \|h\|_{L(\log L)^{l_{1}},Q} |Q|\\
	    &\lesssim \|\vec{b}\|_{\text{BMO}} 
	    \sup_{h\in \mathfrak{b}_{X',u}} \int_{\rn} \mathscr{M}_{L(\log L)}^{(l)} (\vec{f})(x) M_{L(\log L)^{l_{1}}} h(x) dx,
    \end{align*}
    where $\mathscr{M}_{L(\log L)}^{(l)} (\vec{f})(x) \coloneqq \sup_{Q \owns x} \prod_{i=1}^{l} \|f_{i}\|_{L(\log L),Q} \prod_{i=l+1}^{m} \langle |f_{i}|\rangle_{Q}$. 
    The reuse of the H\"older's inequality gives
    $$ \mathcal{H}_{\overrightarrow{l_{1}},j}(\vec{f}) \lesssim \|\vec{b}\|_{\text{BMO}} \big\|\mathscr{M}_{L(\log L)}^{(l)} (\vec{f})\big\|_{M_{X}^{u}} 
    \sup_{h\in \mathfrak{b}_{X',u}} \big\| M_{L(\log L)^{l_{1}}} h\big\|_{\mathfrak{B}_{X',u}}.$$
  Noting that  $l_{1}<l$ and 
    $$\mathscr{M}_{L(\log L)}^{(l)} (\vec{f})(x) \leq C_{n} \mathscr{M}\left(Mf_{1},\dots,Mf_{l},f_{l+1},\dots,f_{m}\right)(x),$$
    one can arrive at 
    $$\mathcal{H}_{\overrightarrow{l_{1}},j}(\vec{f}) \lesssim \|\vec{b}\|_{\text{BMO}} \|\mathscr{M}\|_{{M_{X_{1}}^{u_{1}} \times \cdots \times M_{X_{m}}^{u_{m}} \to M_{X}^{u}}} \left(\prod_{i=1}^{l} \|M\|_{M_{X_{i}}^{u_{i}}}\right) 
    \| M\|_{\mathfrak{B}_{X',u}}^{l+1} 
    \prod_{i=1}^{m} \|f_{i}\|_{M_{X_{i}}^{u_{i}}}.$$
    
    Taking all cases and \eqref{eq2-6} into account, we conclude that
    \begin{align*}
	    \left\| \mathcal{T}_{\vec{b}} (\vec{f}) \right\|_{M_{X}^{u}} &\lesssim \sum_{j=1}^{3^{n}} \left(\mathcal{H}_{\vec{1},j}(\vec{f}) + \mathcal{H}_{\vec{2},j}(\vec{f}) + \max_{0<l_{1}<l} \left\{\mathcal{H}_{\overrightarrow{l_{1}},j}(\vec{f})\right\}\right)\\
	    &\lesssim C_{M} \|\vec{b}\|_{\text{BMO}} \|\mathscr{M}\|_{M_{X_{1}}^{u_{1}} \times \cdots \times M_{X_{m}}^{u_{m}} \to M_{X}^{u}} 
	    \prod_{i=1}^{m} \|f_{i}\|_{M_{X_{i}}^{u_{i}}},
    \end{align*}
    where $C_{M} = \|M\|_{\mathfrak{B}_{X',u}} \left(\prod_{i=1}^{l} \left(\|M\|_{\mathfrak{B}_{X_{i}',u_{i}}} \|M\|_{M_{X_{i}}^{u_{i}}}\right) + \|M\|_{\mathfrak{B}_{X',u}}^{l+1} +\|M\|_{\mathfrak{B}_{X',u}}^{l} \prod_{i=1}^{l} \|M\|_{M_{X_{i}}^{u_{i}}}\right)$.
    
   All that remains in the proof is to verify Claim \ref{claim1}. In fact, \eqref{mxu<sup<mxu} implies that
    \begin{align*}
	    \left\|\mathcal{T}_{\mathcal{S}} (f)\right\|_{M_{X}^{u}} 
	    &\leq \frac{1}{C_{0}} \sup_{h\in \mathfrak{b}_{X',u}} \int_{\rn} \left|\mathcal{T}_{\mathcal{S}} f(x)\right| |h(x)| dx\\
	    &= \frac{1}{C_{0}} \sup_{h\in \mathfrak{b}_{X',u}} \int_{\rn} \sum_{Q \in \mathcal{S}} \langle |f|\rangle_{Q} \chi_{Q}(x) |h(x)| dx\\
	    &=\frac{1}{C_{0}} \sup_{h\in \mathfrak{b}_{X',u}} \sum_{Q \in \mathcal{S}} \langle|f|\rangle_{Q} \langle|h|\rangle_{Q} |Q|.
    \end{align*}
   The sparse property of $\mathcal{S}$ allows us to deduce that
    \begin{align*}
	    \left\|\mathcal{T}_{\mathcal{S}}(f)\right\|_{M_{X}^{u}} &\lesssim \sup_{h\in \mathfrak{b}_{X',u}} \sum_{Q \in \mathcal{S}} \inf_{x\in E_{Q}} Mf(x) \inf_{x\in E_{Q}} Mh(x) |E_{Q}|\\
	    &\leq \sup_{h\in \mathfrak{b}_{X',u}} \sum_{Q \in \mathcal{S}} \int_{E_{Q}} Mf(x) Mh(x) dx\\
	    &\leq \sup_{h\in \mathfrak{b}_{X',u}} \int_{\rn} Mf(x) Mh(x) dx.
    \end{align*}
   Noting that  $\|h\|_{\mathfrak{B}_{X',u}}\leq 1$, it follows from\eqref{holder-M-B} that
    \begin{align*}
	    \left\|\mathcal{T}_{\mathcal{S}}(f)\right\|_{M_{X}^{u}} &\lesssim \|Mf\|_{M_{X}^{u}} \sup_{h\in \mathfrak{b}_{X',u}} \|Mh\|_{\mathfrak{B}_{X',u}}\\
	    &\leq \|M\|_{M_{X}^{u}} \|M\|_{\mathfrak{B}_{X',u}} \|f\|_{M_{X}^{u}},
    \end{align*}
   which proves Claim \ref{claim1}. The proof of Theorem \ref{thm2} is complete.
\end{proof}

\begin{remark}
It is noteworthy that the exponent \( l+1 \) in the constant \( \|M\|_{\mathfrak{B}_{X',u}}^{l+1} \), obtained through the above method, is smaller than the exponent \( l+2 \) derived via extrapolation. This improvement arises because sparse domination circumvents the need for the weighted Coifman-Fefferman inequality (Lemma \ref{yinli3}), thereby yielding a better constant.

\end{remark}

\section{Proof of Theorems \ref{thm-rough}}\label{pr+last}
In this section, we establish the boundedness of rough singular integral operators on Morrey-Banach spaces. This requires the following two lemmas, excerpted from \cite[Theorem 3.3]{ler8} and \cite[Lemma 3.4]{ler8}.
\begin{lemma}[\!\!\cite{ler8}]\label{lem-new1}
	Let $1\leq r<s\leq \infty$ and $m\in \mathbb{N}^{+}$. Suppose $T$ is a sublinear operator and $T, M_{T,s}^{\sharp}$ enjoy the $W_{r}$ property. Then there exist $C_{m,n}>1$ and $\lambda_{m,n}<1$ such that for any bounded functions $f,g$ with compact supports and $b\in L_{\text{loc}}^{1}(\rn)$, there exists a $\frac{1}{2\cdot 3^{n}}$-sparse family $\mathcal{S}$ satisfying 
	$$\int_{\rn} \left|T_{b}^{m} f(x) g(x)\right| dx \leq C\sum_{k=0}^{m} \Big(\sum_{Q \in \mathcal{S}} \langle |b-\langle b\rangle_{Q}|^{m-k} |f| \rangle_{r,Q}  \langle |b-\langle b\rangle_{Q}|^{k} |g|\rangle_{s',Q} |Q| \Big),$$
	where $C\coloneqq C_{m,n}(\varphi_{T,r}(\lambda_{m,n}) + \varphi_{M_{T,s}^{\sharp}, r}(\lambda_{m,n}))$.
\end{lemma}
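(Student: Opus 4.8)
\textbf{Proof proposal for Lemma \ref{lem-new1}.}
The plan is to separate the argument into the sparse bound for $T$ itself (the case $m=0$) and a transference step that produces the iterated commutator. First I would prove the bilinear sparse estimate for $T$: for bounded compactly supported $f,g$ there is a $\frac{1}{2\cdot 3^n}$-sparse family $\mathcal{S}$ with
$$\int_{\rn}|Tf(x)\,g(x)|\,dx \le C_n\big(\varphi_{T,r}(\lambda_n)+\varphi_{M_{T,s}^{\sharp},r}(\lambda_n)\big)\sum_{Q\in\mathcal{S}}\langle|f|\rangle_{r,3Q}\,\langle|g|\rangle_{s',3Q}\,|Q|,$$
and, more precisely, in the localized form: for every cube $Q_0$ and every $f$ supported in $3Q_0$, the associated sparse family may be taken inside $\mathcal{D}(Q_0)$. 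This is the now-standard Lerner-type recursion. Fix $Q_0$, put $\Lambda\coloneqq\langle|f|\rangle_{r,3Q_0}$, and consider the exceptional set
$$E\coloneqq\{x\in Q_0:|T(f\chi_{3Q_0})(x)|>A\varphi_{T,r}(\lambda)\Lambda\}\cup\{x\in Q_0:M_{T,s}^{\sharp}(f\chi_{3Q_0})(x)>A\varphi_{M_{T,s}^{\sharp},r}(\lambda)\Lambda\}.$$
Invoking the $W_r$ property (Hypothesis \ref{hyp-new}) for $T$ and for $M_{T,s}^{\sharp}$, I would fix $\lambda=\lambda_n<1$ and $A=A_n$ so that $|E|\le\frac{1}{2^{n+2}}|Q_0|$, and then perform a Calder\'on--Zygmund stopping-time decomposition of $E$ inside $Q_0$, obtaining pairwise disjoint $\{P_j\}\subseteq\mathcal{D}(Q_0)$ that cover $E$ up to a null set, with $|P_j\cap E|\le\frac12|P_j|$ (hence $\sum_j|P_j|\le\frac12|Q_0|$) and $|T(f\chi_{3Q_0})|\le A\varphi_{T,r}(\lambda)\Lambda$ a.e.\ on $Q_0\setminus\bigcup_jP_j$.

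On $Q_0\setminus\bigcup_jP_j$ the pointwise bound just obtained, paired with $|g|$ and H\"older's inequality, gives a local term $\lesssim\langle|f|\rangle_{r,3Q_0}\langle|g|\rangle_{s',3Q_0}|Q_0|$. On each $P_j$ I split $f\chi_{3Q_0}=f\chi_{3P_j}+f\chi_{3Q_0\setminus3P_j}$: the first piece produces the recursion term $\int_{P_j}|T(f\chi_{3P_j})\,g|$, while the second is handled through the definition of $M_{T,s}^{\sharp}$ --- since $E$ occupies at most half of $P_j$, the function $T(f\chi_{3Q_0\setminus3P_j})$ restricted to $P_j$ equals a constant up to an $L^s(P_j)$-error of norm at most $A\varphi_{M_{T,s}^{\sharp},r}(\lambda)\Lambda|P_j|^{1/s}$, and the constant is controlled by $\Lambda$ via Kolmogorov's inequality together with the $W_r$ property; pairing both against $|g|$ over $P_j$ by H\"older with exponents $s,s'$ yields further local terms. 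Iterating this one-step inequality over all generations and invoking the Three Lattice Theorem produces the sparse family $\mathcal{S}$ and the sparseness constant $\frac{1}{2\cdot 3^n}$, the constant $C$ depending on $T$ only through $\varphi_{T,r}$ and $\varphi_{M_{T,s}^{\sharp},r}$ at the single level $\lambda_n$.

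For the transference step I would fix $Q_0$ and expand, for $x\in Q_0$, using $(b(x)-b(y))^m=\sum_{k=0}^m\binom{m}{k}(b(x)-\langle b\rangle_{3Q_0})^k(\langle b\rangle_{3Q_0}-b(y))^{m-k}$; crucially, because we work with the bilinear form the factor $(b(x)-\langle b\rangle_{3Q_0})^k$ depends only on the output variable and may be moved onto $g$, giving
$$\int_{Q_0}T_b^m(f\chi_{3Q_0})(x)\,g(x)\,dx=\sum_{k=0}^m\binom{m}{k}\int_{\rn}T\big((\langle b\rangle_{3Q_0}-b)^{m-k}f\chi_{3Q_0}\big)(x)\,\big[(b(x)-\langle b\rangle_{3Q_0})^kg(x)\chi_{Q_0}(x)\big]\,dx.$$
Applying the localized $m=0$ estimate with input $(\langle b\rangle_{3Q_0}-b)^{m-k}f\chi_{3Q_0}$ and test function $(b-\langle b\rangle_{3Q_0})^kg\chi_{Q_0}$ bounds the absolute value of each summand by $C\sum_{Q}\langle|b-\langle b\rangle_{3Q_0}|^{m-k}|f|\rangle_{r,3Q}\langle|b-\langle b\rangle_{3Q_0}|^k|g|\rangle_{s',3Q}|Q|$ over a sparse family inside $\mathcal{D}(Q_0)$. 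It then remains to recenter the $b$-averages from $\langle b\rangle_{3Q_0}$ to $\langle b\rangle_{3Q}$: writing $|b-\langle b\rangle_{3Q_0}|\le|b-\langle b\rangle_{3Q}|+|\langle b\rangle_{3Q}-\langle b\rangle_{3Q_0}|$ and expanding the powers, each summand splits into a principal piece involving only $|b-\langle b\rangle_{3Q}|$, which already has the asserted shape, plus cross-terms carrying powers of $|\langle b\rangle_{3Q}-\langle b\rangle_{3Q_0}|$.

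The hard part will be disposing of these cross-terms, since with $b$ merely locally integrable there is no uniform control of $|\langle b\rangle_{3Q}-\langle b\rangle_{3Q_0}|$ along the arbitrarily long dyadic chains produced by the stopping-time construction. I plan to handle them by never treating these differences as free scalars but keeping them inside averages, via the elementary estimate $|\langle b\rangle_{3Q}-\langle b\rangle_{3Q_0}|\le\langle|b-\langle b\rangle_{3Q_0}|\rangle_{3Q}\le\inf_{3Q}M\big(|b-\langle b\rangle_{3Q_0}|\chi_{3Q_0}\big)$, and then running an induction on the commutator order $m$ combined with a Carleson-embedding argument: summed over the sparse family, a cross-term is a sparse bilinear form of commutator order $<m$ weighted by a power of the fixed maximal function $M(|b-\langle b\rangle_{3Q_0}|\chi_{3Q_0})$, which the inductive hypothesis converts into an expression of the claimed type, after which the geometric series in the generation index is summable. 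The remaining ingredients --- the $W_r$-driven exceptional set, the stopping-time decomposition, the oscillation control of the tails by $M_{T,s}^{\sharp}$, and Kolmogorov's inequality --- are routine, and the stated constant $C=C_{m,n}\big(\varphi_{T,r}(\lambda_{m,n})+\varphi_{M_{T,s}^{\sharp},r}(\lambda_{m,n})\big)$ emerges because these are its only quantitative sources, each used at the single scale $\lambda_{m,n}$.
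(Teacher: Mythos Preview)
The paper does not give its own proof of this lemma; it is quoted directly from \cite[Theorem 3.3]{ler8}. That said, your proposal contains a genuine gap in the transference step.

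Your plan is to prove the $m=0$ bilinear sparse bound first and then, fixing a top cube $Q_0$, expand $(b(x)-b(y))^m$ once with the fixed center $c_0=\langle b\rangle_{3Q_0}$, apply the $m=0$ estimate to each of the $m+1$ pieces, and finally recenter from $c_0$ to $c_Q=\langle b\rangle_{3Q}$ for each $Q$ in the resulting sparse family. You correctly flag the recentering as the hard part, but the fix you sketch --- bounding $|c_Q-c_0|\le\langle|b-c_0|\rangle_{3Q}$ and invoking induction on $m$ plus a Carleson embedding --- does not close. For $b$ merely in $L^1_{\mathrm{loc}}$ the scalars $|c_Q-c_0|$ along a dyadic chain are completely unbounded, and the cross-terms they generate, of the form $|c_Q-c_0|^{j}\langle|b-c_Q|^{\alpha}|f|\rangle_{r,3Q}\langle|b-c_Q|^{\beta}|g|\rangle_{s',3Q}|Q|$ with $j\ge1$, are not dominated by any combination of the terms allowed in the conclusion. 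Induction on $m$ cannot help because these cross-terms are not lower-order commutator forms of $T$; they are the same sparse coefficients multiplied by an uncontrolled scalar.

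The argument in \cite{ler8} avoids this by \emph{intertwining} the expansion with the recursion rather than separating them. One proves directly a one-step estimate
\[
\int_{Q_0}\big|T_b^m(f\chi_{3Q_0})\,g\big|
\;\le\;
C\sum_{k=0}^{m}\big\langle|b-\langle b\rangle_{3Q_0}|^{m-k}|f|\big\rangle_{r,3Q_0}\,\big\langle|b-\langle b\rangle_{3Q_0}|^{k}|g|\big\rangle_{s',3Q_0}\,|Q_0|
\;+\;\sum_{j}\int_{P_j}\big|T_b^m(f\chi_{3P_j})\,g\big|
\]
and iterates. The key point is that sublinearity of $T$ gives $|T_b^m(f\chi_{3Q_0})|\le|T_b^m(f\chi_{3P_j})|+|T_b^m(f\chi_{3Q_0\setminus3P_j})|$ on $P_j$; the first summand is the recursion term (to be re-expanded at the next step with the \emph{new} center $\langle b\rangle_{3P_j}$), while only the tail $T_b^m(f\chi_{3Q_0\setminus3P_j})$ is binomially expanded with center $\langle b\rangle_{3Q_0}$ and then controlled through $M_{T,s}^{\sharp}$ exactly as you describe. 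Because the expansion is refreshed at every generation, each local contribution at a cube $Q$ automatically carries the correct center $\langle b\rangle_{3Q}$, and no recentering is ever needed. Your $m=0$ recursion and tail/oscillation analysis are essentially correct and reusable here; what must change is the sequencing --- expand inside the recursion, not before it.
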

\begin{remark}
	Taking $m=1$ in Lemma \ref{lem-new1} and Hypothesis \ref{hyp}, it is easy to check that Hypothesis \ref{hyp} fulfills Lemma \ref{lem-new1}.
\end{remark}
\begin{lemma}[\!\!\cite{ler8}]\label{lem-new2}
	Let $1\leq r,t<\infty$ and $m\in \mathbb{N}^{+}$. Suppose bounded functions $f,g$ have compact supports and $b\in L_{\text{loc}}^{1}(\rn)$. Given any cube $Q$ and $0\leq k\leq m$, define 
	$$c_{k} \coloneqq \langle |b-\langle b\rangle_{Q}|^{m-k} |f|\rangle_{r,Q}  \langle|b-\langle b\rangle_{Q}|^{k} |g|\rangle_{t,Q},$$
	then $c_{k} \leq c_{0}+c_{m}$.
\end{lemma}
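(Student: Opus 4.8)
The plan is to derive the pointwise inequality $c_k\le c_0+c_m$ by bounding each of the two factors that make up $c_k$ through a one-line interpolation (Hölder) estimate and then recombining the pieces with Young's inequality. Throughout, fix the cube $Q$, set $\beta\coloneqq|b-\langle b\rangle_Q|$, and abbreviate $A_j\coloneqq\langle\beta^{j}|f|\rangle_{r,Q}$ and $B_j\coloneqq\langle\beta^{j}|g|\rangle_{t,Q}$ for $0\le j\le m$, so that $c_k=A_{m-k}B_k$ and the desired bound becomes $A_{m-k}B_k\le A_mB_0+A_0B_m$. The endpoint cases $k=0$ and $k=m$ are immediate, so one may assume $0<k<m$.

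The first step is the estimate $A_{m-k}\le A_m^{(m-k)/m}A_0^{k/m}$. This follows by writing $\beta^{(m-k)r}|f|^{r}=(\beta^{mr}|f|^{r})^{(m-k)/m}(|f|^{r})^{k/m}$ pointwise on $Q$ and applying Hölder's inequality for the normalized average $\frac{1}{|Q|}\int_Q$ with the conjugate exponents $\frac{m}{m-k}$ and $\frac{m}{k}$; this gives $A_{m-k}^{r}\le(A_m^{r})^{(m-k)/m}(A_0^{r})^{k/m}$, and taking $r$-th roots yields the claim. Running the identical computation for $g$ with $t$ in place of $r$ produces $B_k\le B_m^{k/m}B_0^{(m-k)/m}$.

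Multiplying the two estimates gives $A_{m-k}B_k\le(A_mB_0)^{(m-k)/m}(A_0B_m)^{k/m}$, and the proof is completed by the weighted arithmetic--geometric mean (Young) inequality $x^{\theta}y^{1-\theta}\le\theta x+(1-\theta)y$ applied with $\theta=\frac{m-k}{m}$, $x=A_mB_0$, $y=A_0B_m$: the right-hand side is then at most $\frac{m-k}{m}A_mB_0+\frac{k}{m}A_0B_m\le A_mB_0+A_0B_m=c_0+c_m$. I do not expect any genuine obstacle in this argument; the only mild subtlety is bookkeeping for the degenerate situations — if $|f|=0$ a.e.\ on $Q$ then $A_0=c_k=c_0=0$ and there is nothing to prove, and symmetrically for $g$ — and reading every average as an element of $[0,\infty]$ so that the Hölder and Young steps stay valid even when some $A_j$ or $B_j$ is infinite (which does not occur in the applications, where $f,g$ are bounded with compact support and $\beta$ lies in every $L^{p}_{\mathrm{loc}}$, e.g.\ via John--Nirenberg when $b\in\mathrm{BMO}$).
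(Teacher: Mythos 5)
Your proof is correct. The paper does not prove this lemma itself but imports it from \cite{ler8}; your argument --- interpolating each factor via H\"older's inequality with exponents $\tfrac{m}{m-k}$ and $\tfrac{m}{k}$ to get $A_{m-k}B_k\le (A_mB_0)^{\frac{m-k}{m}}(A_0B_m)^{\frac{k}{m}}$ and then applying Young's inequality --- is essentially the same proof as in that reference, and your handling of the degenerate/infinite cases is fine.
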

Now, we can proceed to prove that rough singular integral operators are bounded on Morrey-Banach spaces. 
\begin{proof}[Proof of Theorem \ref{thm-rough}]
	As can be seen from \eqref{mxu<sup<mxu}, we have
	$$\left\|T_{b}^{m} f\right\|_{M_{X}^{u}} \leq \frac{1}{c_{0}} \sup_{\|h\|_{\mathfrak{B}_{X',u}}\leq 1} \int_{\rn} |T_{b}^{m} f(x)| |h(x)| dx.$$
	Fixed $h\in \mathfrak{B}_{X',u}$ with $\|h\|_{\mathfrak{B}_{X',u}} \leq 1$, the joint application of Lemmata \ref{lem-new1} and \ref{lem-new2} yields that
	\begin{equation}
		\begin{aligned}\label{eq-new2}
		\int_{\rn} |T_{b}^{m} f(x)| |h(x)| dx 
		&\lesssim \sum_{Q \in \mathcal{S}} \langle|b-\langle b\rangle_{Q}|^{m} |f|\rangle_{r,Q} \langle|h|\rangle_{s',Q} |Q| \\
		    &\quad + \sum_{Q \in \mathcal{S}} \langle|f|\rangle_{r,Q} \langle|b-\langle b\rangle_{Q}|^{m} |h|\rangle_{s',Q} |Q|\\
		&\eqqcolon I_{1}^{0} +I_{2}^{0},
	\end{aligned}
	\end{equation}
	where $I_{1}^{0} = \sum_{Q \in \mathcal{S}} \langle|b-\langle b\rangle_{Q}|^{m} |f|\rangle_{r,Q} \langle|h|\rangle_{s',Q} |Q|$ and $I_{2}^{0} = \sum_{Q \in \mathcal{S}} \langle|f|\rangle_{r,Q} \langle|b-\langle b\rangle_{Q}|^{m} |h|\rangle_{s',Q} |Q|$. 
	
	Since \( r_0 > \max\{r, s'\} \), an application of Hölder’s inequality yields
	
	\begin{align*}
		I_{1}^{0} &\leq \sum_{Q \in \mathcal{S}} \langle|b-\langle b\rangle_{Q}|^{m}\rangle_{r_{0}',Q}  \langle |f|\rangle_{r_{0},Q}  \langle|h|\rangle_{r,Q} |Q|\\
		&\lesssim \|b\|_{\text{BMO}}^{m}  \sum_{Q \in \mathcal{S}} \langle|f|\rangle_{r_{0},Q} \langle |h|\rangle_{r_{0},Q} |E_{Q}|,
	\end{align*}
	where the fact that \( \langle |b - \langle b \rangle_{Q}| \rangle_{p, Q} \simeq \|b\|_{\text{BMO}} \) for \( 0 < p < \infty \) is used.
	
	Notice that the sets $\{E_{Q}\}$ are pairwise disjoint. It follows that 
	\begin{align}\label{eq-new1}
		I_{1}^{0} \lesssim \|b\|_{\text{BMO}}^{m} \int_{\rn} M_{r_{0}} f(x) M_{r_{0}} h(x) dx
		\leq \|b\|_{\text{BMO}}^{m}  \|M_{r_{0}} f\|_{M_{X}^{u}}  \|M_{r_{0}} h\|_{\mathfrak{B}_{X',u}}.
	\end{align}
	Invoking that $u\in \mathbb{W}_{X',r_{0}^{-1}}$ and \cite[Theorem 2.11]{ho8} arrives at the boundedness of $M_{r_{0}}$ on $\mathfrak{B}_{X',u}$, moreover, 
	\begin{align*}
		\|M_{r_{0}} f\|_{M_{X}^{u}} &= \big\| \left(M(|f|^{r_{0}})\right)^{\frac{1}{r_{0}}} \big\|_{M_{X}^{u}}\\
		&\leq \|M\|_{M_{X^{1/r_{0}}}^{u^{r_{0}}}}^{\frac{1}{r_{0}}}  \| |f|^{r_{0}} \|_{M_{X^{1/r_{0}}}^{u^{r_{0}}}}^{\frac{1}{r_{0}}}\\
		&= \|M\|_{M_{X^{1/r_{0}}}^{u^{r_{0}}}}^{\frac{1}{r_{0}}}  \|f\|_{M_{X}^{u}},
	\end{align*}
	which combined with \eqref{eq-new1} indicates 
	$$I_{1}^{0} \lesssim \|b\|_{\text{BMO}}  \|M\|_{M_{X^{1/r_{0}}}^{u^{r_{0}}}}^{\frac{1}{r_{0}}} \|M_{r_{0}}\|_{\mathfrak{B}_{X',u}} \|f\|_{M_{X}^{u}}.$$
	
	Regarding $I_{2}^{0}$,  in a manner analogous to the treatment of $I_{1}^{0}$, we obtain 
	\begin{equation}
		\begin{aligned}\label{eq-new3}
		I_{2}^{0} &\lesssim \sum_{Q \in \mathcal{S}} \langle|b-\langle b\rangle_{Q}|^{m} \rangle_{r_{0}',Q}  \langle|f|\rangle_{r,Q}  \langle|h|\rangle_{r_{0},Q} |Q|\\
		&\lesssim \|b\|_{\text{BMO}}^{m}  \int_{\rn} M_{r_{0}} f(x)  M_{r_{0}} h(x) dx\\
		&\lesssim \|b\|_{\text{BMO}}^{m}  \|M\|_{M_{X^{1/r_{0}}}^{u^{r_{0}}}}^{\frac{1}{r_{0}}} \|M_{r_{0}}\|_{\mathfrak{B}_{X',u}} \|f\|_{M_{X}^{u}}. 
	\end{aligned}
	\end{equation}
	
	By integrating \eqref{eq-new3} with \eqref{eq-new1}, we have 
	$$\|T_{b}^{m}\|_{M_{X}^{u}} \lesssim \|b\|_{\text{BMO}}^{m}  \|M\|_{M_{X^{1/r_{0}}}^{u^{r_{0}}}}^{\frac{1}{r_{0}}} \|M_{r_{0}}\|_{\mathfrak{B}_{X',u}} \|f\|_{M_{X}^{u}},$$
which is precisely what we set out to prove.
\end{proof}

\section{Applications for $u$-Morrey-Lorentz spaces}\label{pr+3}
In the 1980s, Adams \cite{ada2} generalized Lorentz space to Morrey-Lorentz space as following form
$$R^{(p,q,\lambda)}\coloneqq \left\{f\in \mathcal{M}(\mathbb{R}^n): \|f\|_{R^{(p,q,\lambda)}} = \sup_{x\in \mathbb{R}^n, r>0} \frac{1}{r^{\lambda/p}} \|f\chi_{B(x,r)}\|_{L^{p,q}(\mathbb{R}^n)}<\infty\right\}$$
where $1\leq p<\infty, 0<q<\infty$ and $0\leq \lambda \leq n$.
Later on, Ragusa \cite{rag} extended the above definition of $\mathcal{R}^{(p,q,\lambda)}$ to the case where $0<p,q\leq\infty$ and defined the Morrey-Lorentz space $\mathcal{R}^{(p,q,\lambda)}$ by
$$\mathcal{R}^{(p,q,\lambda)}\coloneqq \left\{f\in \mathcal{M}(\mathbb{R}^n): \|f\|_{\mathcal{R}^{(p,q,\lambda)}} = \sup_{x\in \mathbb{R}^n, r>0} \frac{1}{r^{\lambda/q}} \|f\chi_{B(x,r)}\|_{L^{p,q}(\mathbb{R}^n)}<\infty\right\}$$
with $0\leq \lambda<n$. Notice that $R^{(p,q,\lambda \frac{p}{q})} = \mathcal{R}^{(p,q,\lambda)}$ and the Morrey-Lorentz space we mentioned later refers to $\mathcal{R}^{(p,q,\lambda)}$. In a certain sense, $\mathcal{R}^{(p,1,\lambda)}$ is the ``smallest" normed linear space and $\mathcal{R}^{(p,\infty,\lambda)}$ is the ``largest" one, that is, for any $1<q<\infty$, 
$$\mathcal{R}^{(p,1,\lambda)} \subseteq \mathcal{R}^{(p,q,\lambda)} \subseteq \mathcal{R}^{(p,\infty,\lambda)}.$$ 
Ragusa also pointed out that for $0\leq \lambda<n, 1<p<\infty$ and $1\leq q\leq \infty$, $\|\cdot\|_{\mathcal{R}^{(p,q,\lambda)}}$ is a norm under which $\mathcal{R}^{(p,q,\lambda)}$ is complete and similarly for $p=q=1, 0\leq \lambda<n$ or $p=q=\infty, 0\leq \lambda<n$. 
In 2013, Aykol, Guliyev, and Serbetci \cite{ayk} introduced the concept of the local Morrey-Lorentz space and considered the boundedness of the maximal operator on this space. More recently, Ho \cite{ho3} investigated the boundedness of nonlinear commutators on Morrey-Lorentz spaces.

Note that $r^{\frac{\lambda}{q}}$ plays a crucial role in the definition of $\mathcal{R}^{(p,q,\lambda)}$. Therefore, we can generalize it to the $u$-Morrey-Lorentz space naturally.
\begin{definition}[$u$-Morrey-Lorentz space]\label{def-uml}
	Given $f\in \mathcal{M}(\mathbb{R}^n)$ and $0<p,q\leq \infty$, define
	$$\|f\|_{M_{p,q}^u} \coloneqq  \sup_{x\in \mathbb{R}^n, r>0} \frac{1}{u(x,r)} \left\|f\chi_{B(x,r)}\right\|_{L^{p,q}(\mathbb{R}^n)}.$$
	The set of all $f$ with $\|f\|_{M_{p,q}^u}<\infty$ is denoted by $M_{p,q}^u$ and is called the $u$-Morrey-Lorentz space. 
\end{definition}

\begin{remark}
	If $u(x,r) = r^{\frac{\lambda}{q}}$ with $0\leq \lambda<n$, then the space $M_{p,q}^u$ returns to the Morrey-Lorentz space. If $1\leq p=q<\infty$, then the space $M_{p,q}^u$ is the classical Morrey space. 
\end{remark}

In this section, the main result is the boundedness of the iterated commutator of the multilinear operator on the $u$-Morrey-Lorentz space.
\begin{theorem}\label{thm3}
	Let $I \coloneqq  \{i_1,\dots,i_l\} = \{1,\dots,l\} \subseteq \{1,\dots,m\}$, $1< p,p_1,\dots,p_m<\infty, 1\leq q,q_1,\dots,q_m<\infty$ satisfying $\frac{1}{p} = \sum_{i=1}^{m} \frac{1}{p_i}$ and $\frac{1}{q} = \sum_{i=1}^{m} \frac{1}{q_i}$, and $0<\theta<\min_{i\geq 1}\{\frac{1}{p_i}\}$. Suppose that $\vec{b} \in \mathrm{BMO}^l$ and $\mathcal{T}_{\vec{b}}$ satisfies Hypothesis \ref{hyp}. If there exists a constant $C>0$ such that for any $x,x_1,x_2\in \mathbb{R}^n, r,r_1,r_2\in \mathbb{R}^+, i\in \{0,\dots,m\}$,
	\begin{align}\label{eq4}
		\frac{\|\chi_{B(x_1,r_1)}\|_{L^{p_i,q_i}(\mathbb{R}^n)}}{u_i(x_1,r_1)} \leq C \frac{\|\chi_{B(x_2,r_2)}\|_{L^{p_i,q_i}(\mathbb{R}^n)}}{u_i(x_2,r_2)} \ \text{when} \ u_i(x_1,r_1) \leq u_i(x_2,r_2),
	\end{align}
	and
	\begin{align}\label{eq5}
		u_i(x,2^jr) \leq C 2^{jn\theta} u_i(x,r),
	\end{align}
	where $p_0\coloneqq p, q_0\coloneqq q, u_0(x,r)\coloneqq u(x,r)=\prod_{i=1}^m u_i(x,r)$, then
	\begin{align*}
		\left\|\mathcal{T}_{\vec{b}}(\vec{f})\right\|_{M_{p,q}^{u}} 
		\leq C \|\vec{b}\|_{\text{BMO}} \|M\|_{\mathfrak{B}_{L^{p',q'},u}}^{2+l} \|M\|_{M_{p_1,q_1}^{u_1} \times \cdots \times M_{p_m,q_m}^{u_m} \to M_{p,q}^{u}} 
		\prod_{i=1}^m \|M\|_{M_{p_i,q_i}^{u_i} \to M_{p_i,q_i}^{u_i}}  \|f_i\|_{M_{p_i,q_i}^{u_i}}.
	\end{align*}
\end{theorem}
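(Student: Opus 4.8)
The plan is to obtain Theorem~\ref{thm3} as a direct consequence of part~(1) of Theorem~\ref{thm2}, applied with the choices $X = L^{p,q}(\rn)$ and $X_i = L^{p_i,q_i}(\rn)$ for $i=1,\dots,m$, so that $M_X^u = M_{p,q}^u$, $M_{X_i}^{u_i} = M_{p_i,q_i}^{u_i}$, and the parameter ``$p$'' appearing in Theorem~\ref{thm2} is taken to be $1$. First I would record the elementary facts that, since $1<p,p_i<\infty$ and $1\le q,q_i<\infty$, the spaces $X$ and $X_i$ are Banach function spaces and $\|\chi_B\|_{L^{p,q}(\rn)}\simeq|B|^{1/p}$ for every ball $B$ (with implicit constants depending only on the exponents), and similarly with $p_i,q_i$; consequently $\|\chi_B\|_X\le C\prod_{i=1}^m\|\chi_B\|_{X_i}$ follows from $\tfrac1p=\sum_{i=1}^m\tfrac1{p_i}$. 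The boundedness of $M$ on $L^{a,b}(\rn)$ for all $1<a<\infty$ and $0<b\le\infty$ (a special case of Lemma~\ref{acc-cor2.2} with the trivial weight) yields $X,X_i\in\mathbb{M}$ and, applied to conjugate exponents, $X' = L^{p',q'}(\rn)\in\mathbb{M}$ and $(X_i^1)' = X_i' = L^{p_i',q_i'}(\rn)\in\mathbb{M}$.

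Next I would verify the multilinear maximal hypothesis of Theorem~\ref{thm2} with the parameter equal to $1$, i.e.\ $\mathscr{M}\colon L^{p_1,q_1}(\rn)\times\cdots\times L^{p_m,q_m}(\rn)\to L^{p,q}(\rn)$. This is immediate from the pointwise bound $\mathscr{M}(\vec f)(x)\le\prod_{i=1}^m Mf_i(x)$, the H\"older inequality in Lorentz spaces (Lemma~\ref{lem4}, estimate~\eqref{eq1}), and the scalar maximal estimate $\|Mf_i\|_{L^{p_i,q_i}(\rn)}\lesssim\|f_i\|_{L^{p_i,q_i}(\rn)}$.

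The substantive step is to check that $u_i\in\mathbb{W}_{X_i}^0$ for $i=1,\dots,m$ and $u=\prod_{i=1}^m u_i\in\mathbb{W}_X^0$, using \eqref{eq4} and \eqref{eq5}. Item~(1) of Definition~\ref{def-wxalpha} for $u_i$, and (with $p_0=p$, $q_0=q$, $u_0=u$) for $u$, is exactly hypothesis~\eqref{eq4}. For item~(2) with $\alpha=0$, combining $\|\chi_{B(x,r)}\|_{L^{p_i,q_i}(\rn)}/\|\chi_{B(x,2^{j+1}r)}\|_{L^{p_i,q_i}(\rn)}\simeq 2^{-(j+1)n/p_i}$ with the estimate $u_i(x,2^{j+1}r)\le C2^{(j+1)n\theta}u_i(x,r)$ supplied by \eqref{eq5} gives
\begin{equation*}
\sum_{j=0}^\infty\frac{\|\chi_{B(x,r)}\|_{L^{p_i,q_i}(\rn)}}{\|\chi_{B(x,2^{j+1}r)}\|_{L^{p_i,q_i}(\rn)}}\,u_i(x,2^{j+1}r)\lesssim u_i(x,r)\sum_{j=0}^\infty 2^{(j+1)n(\theta-1/p_i)},
\end{equation*}
and the geometric series converges because $\theta<\min_{i\ge1}\{1/p_i\}\le 1/p_i$; the same computation with $p_0=p$ works since $\tfrac1p=\sum_{i=1}^m\tfrac1{p_i}\ge\max_i\tfrac1{p_i}>\theta$, so $u\in\mathbb{W}_X^0$. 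Having verified all hypotheses of Theorem~\ref{thm2}(1), I would apply it and substitute $X=L^{p,q}(\rn)$, $X'=L^{p',q'}(\rn)$, $X_i=L^{p_i,q_i}(\rn)$ (so $\mathfrak{B}_{X',u}=\mathfrak{B}_{L^{p',q'},u}$), which produces exactly the asserted estimate. The only genuinely delicate point is the bookkeeping in the $\mathbb{W}^0$ verification---ensuring that each exponent $n(\theta-1/p_i)$ and $n(\theta-1/p)$ is strictly negative---and this is precisely where the hypothesis $0<\theta<\min_{i\ge1}\{1/p_i\}$ is used.
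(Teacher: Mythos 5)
Your proposal is correct and follows essentially the same route as the paper: both reduce Theorem \ref{thm3} to Theorem \ref{thm2} (case $p=1$) with $X=L^{p,q}(\rn)$, $X_i=L^{p_i,q_i}(\rn)$, using the explicit formula $\|\chi_B\|_{L^{p,q}}\simeq |B|^{1/p}$ for the norm-factorization hypothesis, Lemma \ref{acc-cor2.2} with trivial weight plus Lemma \ref{lem4} for the maximal-operator and $\mathscr{M}$ hypotheses, and \eqref{eq4}--\eqref{eq5} with the convergent geometric series (where $\theta<1/p_i$ and $\theta<1/p$ enter) to check the $\mathbb{W}^0$ conditions. The only differences are cosmetic bookkeeping; your exponent $2^{(j+1)n(\theta-1/p_i)}$ is in fact the corrected form of the paper's displayed exponent.
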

\begin{proof}[Proof of Theorem \ref{thm3}]
	For any $1<p,p_1,\dots,p_m<\infty, 1\leq q,q_1,\dots,q_m<\infty$ and any ball $B(x,r)$, it is easy to see 
	\begin{align}\label{chib-lpq}
		\|\chi_{B(x,r)}\|_{L^{p,q}(\mathbb{R}^n)} = \left(\frac{p}{q}\right)^{\frac{1}{q}} v_n^{\frac{1}{p}} r^{\frac{n}{p}} = \left(\frac{p}{q}\right)^{\frac{1}{q}} \prod_{i=1}^m v_n^{\frac{1}{p_i}} r^{\frac{n}{p_i}},
	\end{align}
	where $v_n$ denotes the volume of the unit ball in $\rn$. According to \cite[Propsition 1.4.16]{gra}, the associate space of $L^{p,q}(\mathbb{R}^n)$ is $L^{p',q'}(\mathbb{R}^n)$ for $1<p,q<\infty$, and the associate space of $L^{p,1}(\mathbb{R}^n)$ is $L^{p',\infty}(\mathbb{R}^n)$ for any $1<p<\infty$. By taking $w\equiv1$ in Lemma \ref{acc-cor2.2}, we can see that the Hardy-Littlewood maximal operator $M$ is bounded on $L^{p',q'}(\mathbb{R}^n)$ for any $1<p<\infty$ and $1\leq q< \infty$, which implies that $L^{p,q}(\mathbb{R}^n)\in \mathbb{M}'$ and $L^{p_i,q_i}(\mathbb{R}^n) \in \mathbb{M}'$ for $i=1,\dots,m$. Combining the fact $\mathscr{M}(\vec{f})(x)\leq \prod_{i=1}^m Mf_i(x)$ with Lemma \ref{lem4}, we obtain
	\begin{align*}
		\|\mathscr{M}(\vec{f})\|_{L^{p,q}(\mathbb{R}^n)} \leq m^{\frac{1}{p}} \prod_{i=1}^m \|Mf_i\|_{L^{p_i,q_i}(\mathbb{R}^n)}\lesssim C_{q,n} \prod_{i=1}^m (p_i')^{\frac{1}{\min\{p_i,q_i\}}} m^{\frac{1}{p_i}} \|f_i\|_{L^{p_i,q_i}(\mathbb{R}^n)},
	\end{align*}
	where $1<p,p_1,\dots,p_m<\infty$ and $1\leq q,q_1,\dots,q_m<\infty$.
	
	In order to apply Theorem \ref{thm2}, it suffices to verify that $u_0\coloneqq u\in \mathbb{W}_{L^{p,q}(\mathbb{R}^n)}^0$ and $u_i\in \mathbb{W}_{L^{p_i,q_i}(\mathbb{R}^n)}^0, i=1,\dots,m$. Notice that $p_0=p, q_0=q$ and $0<\theta<\min_{i\geq 1}\left\{\frac{1}{p_i}\right\}<\frac{1}{p}$ since that $\frac{1}{p} = \sum_{i=1}^m \frac{1}{p_i}$. For any $u_i, i=0,\dots,m$, combined with \eqref{eq4}, what we need to do is to estimate the term (2) in Definition \ref{def-wxalpha}. Indeed, \eqref{eq5} and \eqref{chib-lpq} guarantee that 
	\begin{equation*}
		\begin{aligned}
		\sum_{j=0}^{\infty} \frac{\|\chi_{B(x,r)}\|_{L^{p_i,q_i}(\mathbb{R}^n)}}{\|\chi_{B(x,2^{j+1}r)}\|_{L^{p_i,q_i}(\mathbb{R}^n)}} \frac{u_{i}(x,2^{j+1}r)}{u_i(x,r)} 
		&\leq C \sum_{j=0}^{\infty} 2^{(j+1)n\theta} \frac{\|\chi_{B(x,r)}\|_{L^{p_i,q_i}(\mathbb{R}^n)}}{\|\chi_{B(x,2^{j+1}r)}\|_{L^{p_i,q_i}(\mathbb{R}^n)}} \\
		&= C \sum_{j=0}^{\infty} 2^{(j+1)n\left(\theta-\frac{n}{p}\right)}
		<\infty.
	\end{aligned}
	\end{equation*}
	Therefore, $u_i\in \mathbb{W}_{L^{p_i,q_i}(\mathbb{R}^n)}^0, i=1,\dots,m$ and $u=\prod_{i=1}^m u_i \in \mathbb{W}_{L^{p,q}(\mathbb{R}^n)}^0$. Following \cite[Page 5555]{bar}, $L^{p,q}(\mathbb{R}^n)$ is a normable Banach function space. Therefore, Theorem \ref{thm2} provides 
	\begin{align*}
		\left\|\mathcal{T}_{\vec{b}}(\vec{f})\right\|_{M_{p,q}^u} \leq C\|\vec{b}\|_{\text{BMO}} \|M\|_{\mathfrak{B}_{L^{p',q'},u}}^{2+l} \|M\|_{M_{p_1,q_1}^{u_1} \times \cdots \times M_{p_m,q_m}^{u_m} \to M_{p,q}^u} \prod_{i=1}^m \|M\|_{M_{p_i,q_i}^{u_i} \to M_{p_i,q_i}^{u_i}} \|f_i\|_{M_{p_i,q_i}^{u_i}},
	\end{align*}
	which completes the proof of Theorem \ref{thm3}.
\end{proof}

\begin{corollary}\label{cor1}
	Let $I \coloneqq  \{i_1,\dots,i_l\} = \{1,\dots,l\} \subseteq \{1,\dots,m\}$, $1< p,p_1,\dots,p_m<\infty, 1\leq q,q_1,\dots,q_m<\infty$ satisfying $\frac{1}{p} = \sum_{i=1}^{m} \frac{1}{p_i}, \frac{1}{q} = \sum_{i=1}^{m} \frac{1}{q_i}$, and $0\leq\lambda<n$ with $\frac{\lambda}{n} < \min\left\{\frac{q_i}{p_i}, \frac{q}{p}\right\}$. Suppose that $\vec{b} \in \mathrm{BMO}^l$ and $\mathcal{T}_{\vec{b}}$ satisfies Hypothesis \ref{hyp}. Then 
	\begin{align}\label{eq6}
		\left\|\mathcal{T}_{\vec{b}}(\vec{f})\right\|_{\mathcal{R}^{(p,q,\lambda)}}
	    \leq C \|\vec{b}\|_{\text{BMO}} 
	    \prod_{i=1}^m \|f_i\|_{\mathcal{R}^{(p_i,q_i,\lambda)}}.
	\end{align}
	In particular,
	\begin{align}\label{eq7}
		\left\|\mathcal{T}_{\vec{b}}(\vec{f})\right\|_{L^{q,\lambda}(\mathbb{R}^n)}
		\leq C \|\vec{b}\|_{\text{BMO}} 
		\prod_{i=1}^m \|f_i\|_{L^{q_i,\lambda}(\mathbb{R}^n)}.
	\end{align}
\end{corollary}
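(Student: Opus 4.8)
The plan is to realize $\mathcal{R}^{(p,q,\lambda)}$ and each $\mathcal{R}^{(p_i,q_i,\lambda)}$ as Morrey--Banach spaces and then to apply Theorem \ref{thm2} directly (the proof of Theorem \ref{thm3} itself reduces to Theorem \ref{thm2}, so this is the natural route). Put $X\coloneqq L^{p,q}(\rn)$, $X_i\coloneqq L^{p_i,q_i}(\rn)$, and
\[
u_i(x,r)\coloneqq r^{\lambda/q_i}\quad(i=1,\dots,m),\qquad u(x,r)\coloneqq r^{\lambda/q}.
\]
Since $\frac1q=\sum_{i=1}^m\frac1{q_i}$, one has $u=\prod_{i=1}^m u_i$, and by Definition \ref{def-uml} (and the remark following it) $M_X^u=\mathcal{R}^{(p,q,\lambda)}$, $M_{X_i}^{u_i}=\mathcal{R}^{(p_i,q_i,\lambda)}$ with equivalent norms. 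We may assume $0<\lambda<n$; when $\lambda=0$ the spaces $\mathcal{R}^{(\cdot,\cdot,0)}$ are ordinary Lorentz spaces and \eqref{eq6} is the Lorentz-space bound for $\mathcal{T}_{\vec b}$, covered by the rearrangement-invariant results of \cite{tan} (equivalently, by the case $u\equiv u_i\equiv1$ of Theorem \ref{thm2}).

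First I would clear the structural hypotheses of Theorem \ref{thm2}, taking its exponent (there denoted $p$) equal to $1$ so that the conclusion is phrased directly in terms of $\|f_i\|_{M_{X_i}^{u_i}}$. For the indices at hand, $L^{p,q}(\rn)$ and the $L^{p_i,q_i}(\rn)$ are (normable) Banach function spaces by \cite{bar}; the explicit value $\|\chi_{B(x,r)}\|_{L^{p,q}(\rn)}=(p/q)^{1/q}v_n^{1/p}r^{n/p}$ from \eqref{chib-lpq}, read multiplicatively, gives $\|\chi_B\|_X\le C\prod_{i=1}^m\|\chi_B\|_{X_i}$ with $C$ independent of $B$; taking $w\equiv1$ in Lemma \ref{acc-cor2.2} shows $M$ bounded on $L^{p,q}(\rn)$, $L^{p_i,q_i}(\rn)$ and on their associate spaces $L^{p',q'}(\rn)$, $L^{p_i',q_i'}(\rn)$ (reading $q'=\infty$ when $q=1$, and likewise for the $q_i$), so that $X,X_i\in\mathbb{M}$ and $X',X_i'\in\mathbb{M}$; and from $\mathscr{M}(\vec f)\le\prod_{i=1}^m Mf_i$ together with the Hölder inequality in Lorentz spaces (Lemma \ref{lem4}) and Lemma \ref{acc-cor2.2}, $\mathscr{M}$ is bounded from $X_1\times\cdots\times X_m$ into $X$.

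The one real point is to verify $u_i\in\mathbb{W}_{X_i}^0$ ($i=1,\dots,m$) and $u\in\mathbb{W}_X^0$. Item (1) of Definition \ref{def-wxalpha} is immediate: by \eqref{chib-lpq}, $\frac{\|\chi_{B(x,r)}\|_{X_i}}{u_i(x,r)}=(p_i/q_i)^{1/q_i}v_n^{1/p_i}\,r^{\,n/p_i-\lambda/q_i}$ is nondecreasing in $r$ --- the exponent $\frac n{p_i}-\frac\lambda{q_i}$ being positive, which is precisely $\frac\lambda n<\frac{q_i}{p_i}$ --- while $u_i(x_1,r_1)\le u_i(x_2,r_2)\Leftrightarrow r_1\le r_2$; the same applies to $u$. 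For item (2), \eqref{chib-lpq} turns the defining sum into a geometric series,
\[
\sum_{j=0}^\infty\frac{\|\chi_{B(x,r)}\|_{X_i}}{\|\chi_{B(x,2^{j+1}r)}\|_{X_i}}\,u_i(x,2^{j+1}r)=r^{\lambda/q_i}\sum_{j=0}^\infty 2^{(j+1)\left(\frac\lambda{q_i}-\frac n{p_i}\right)},
\]
which converges to a constant multiple of $u_i(x,r)$ exactly when $\frac\lambda{q_i}<\frac n{p_i}$, i.e. $\frac\lambda n<\frac{q_i}{p_i}$; running the same computation with $(p,q)$ in place of $(p_i,q_i)$ yields $u\in\mathbb{W}_X^0$ under $\frac\lambda n<\frac qp$. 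This is the step that consumes the hypothesis $\frac\lambda n<\min\{q_i/p_i,\,q/p\}$, and it is the only obstacle worth naming --- everything else is bookkeeping with the explicit $\chi_B$ norms. (Note that this argument does not invoke the auxiliary parameter $\theta$ of Theorem \ref{thm3}, which is why the full range of $\lambda$ is reached.)

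With the hypotheses of Theorem \ref{thm2} in place (exponent $1$), that theorem gives
\[
\|\mathcal{T}_{\vec b}(\vec f)\|_{M_X^u}\lesssim\|\vec b\|_{\mathrm{BMO}}\,\|M\|_{\mathfrak{B}_{X',u}}^{l+2}\,\|\mathscr{M}\|_{M_{X_1}^{u_1}\times\cdots\times M_{X_m}^{u_m}\to M_X^u}\prod_{i=1}^m\|M\|_{M_{X_i}^{u_i}\to M_{X_i}^{u_i}}\|f_i\|_{M_{X_i}^{u_i}}.
\]
Here $\|M\|_{\mathfrak{B}_{X',u}}$ is finite by Lemma \ref{the boundedness of the Hardy-Littlewood maximal operator on {B}_{X',u}} ($X'\in\mathbb{M}$, $u\in\mathbb{W}_X^0$), $\|\mathscr{M}\|_{M_{X_1}^{u_1}\times\cdots\to M_X^u}$ is finite by Lemma \ref{thm-multi-maximal-bounded-mxu} with Remark \ref{wxi0} ($\mathscr{M}$ bounded on $X_1\times\cdots\times X_m$, $X_i\in\mathbb{M}$, $u_i\in\mathbb{W}_{X_i}^0$), and each $\|M\|_{M_{X_i}^{u_i}\to M_{X_i}^{u_i}}$ is finite by Remark \ref{wxi0}; absorbing these finite constants and recalling $M_X^u=\mathcal{R}^{(p,q,\lambda)}$, $M_{X_i}^{u_i}=\mathcal{R}^{(p_i,q_i,\lambda)}$ gives \eqref{eq6}. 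To deduce \eqref{eq7} I would specialize $p_i=q_i$ (hence $p=q$, since $\tfrac1q=\sum\tfrac1{q_i}=\sum\tfrac1{p_i}=\tfrac1p$); then $\mathcal{R}^{(q_i,q_i,\lambda)}=L^{q_i,\lambda}(\rn)$ and $\mathcal{R}^{(q,q,\lambda)}=L^{q,\lambda}(\rn)$ because $L^{s,s}(\rn)=L^s(\rn)$, while the side condition $\frac\lambda n<\min\{q_i/p_i,q/p\}$ collapses to $0\le\lambda<n$, so \eqref{eq6} becomes \eqref{eq7}.
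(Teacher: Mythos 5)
Your proposal is correct and follows essentially the same route as the paper: take $u(x,r)=r^{\lambda/q}$, $u_i(x,r)=r^{\lambda/q_i}$, use the explicit value of $\|\chi_{B(x,r)}\|_{L^{p,q}(\rn)}$ to verify both items of the $\mathbb{W}^0$ condition (which is exactly where $\tfrac{\lambda}{n}<\min\{q_i/p_i,\,q/p\}$ is consumed, bypassing the $\theta$-condition of Theorem \ref{thm3}), and then invoke Theorem \ref{thm2}, specializing $p_i=q_i$ for \eqref{eq7}. The only cosmetic difference is that you re-verify the structural Lorentz-space hypotheses (normability, $M$ and $\mathscr{M}$ bounds, multiplicativity of $\|\chi_B\|$) inline, whereas the paper imports them from its proof of Theorem \ref{thm3}.
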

\begin{remark}
	There are three points worth mentioning. The first is that for $1 < p, p_1, \dots, p_m < \infty$ and $1 \leq q, q_1, \dots, q_m < \infty$, as a special case of Corollary \ref{cor1}, the operator $\mathcal{T}_{\vec{b}}$ is bounded from $L^{p_1, q_1}(\mathbb{R}^n) \times \cdots \times L^{p_m, q_m}(\mathbb{R}^n)$ to $L^{p, q}(\mathbb{R}^n)$, since $\mathcal{R}^{(p, q, 0)} = L^{p, q}(\mathbb{R}^n)$.
	 The second point is that Corollary \ref{cor1} generalizes Theorem 5.2 in \cite{zha} by removing the condition $q_i<p_i$ for $i=1, \dots, m$. The third point is that the condition $\frac{\lambda}{n}<\min_i \{ \frac{q_i}{p_i}, \frac{q}{p} \}$ is necessary, because the Morrey space  $L^{p, \lambda}(\mathbb{R}^n)$ requires $\lambda<n$ when $m=1$ and $p=q$.
\end{remark}

\begin{proof}[Proof of Corollary \ref{cor1}]
	For any $x\in \rn$ and $r>0$, let $u(x,r)=r^{\frac{\lambda}{q}}$ and $u_i(x,r)=r^{\frac{\lambda}{q_i}}$, then $M_{p,q}^u = \mathcal{R}^{(p,q,\lambda)}, M_{p_i,q_i}^\lambda = \mathcal{R}^{(p_i,q_i,\lambda)}$ with $0\leq \lambda<n$. In accordance with Theorems \ref{thm2} and \ref{thm3}, it suffices to verify $u\in \mathbb{W}_{L^{p,q}(\mathbb{R}^n)}^0$ and $u_i\in \mathbb{W}_{L^{p_i,q_i}(\mathbb{R}^n)}^0$.
	On the one hand, suppose that $u(x_1,r_1) \leq u(x_2,r_2)$, then $r_1\leq r_2$. Thus, \eqref{chib-lpq} asserts that
	\begin{align}\label{eq-1127-1}
		\frac{\|\chi_{B(x_1,r_1)}\|_{L^{p,q}(\mathbb{R}^n)}}{u(x_1,r_1)} 
		= Cr_1^{\frac{n}{p}-\frac{\lambda}{q}}
		\leq C r_2^{\frac{n}{p}-\frac{\lambda}{q}} 
		= C \frac{\|\chi_{B(x_2,r_2)}\|_{L^{p,q}(\mathbb{R}^n)}}{u(x_2,r_2)},
	\end{align} 
	which is obtained by using the fact that $\frac{\lambda}{n}<\frac{q}{p}$. On the other hand, we have
	\begin{align}\label{eq-1127-2}
		\sum_{j=0}^{\infty} \frac{\|\chi_{B(x,r)}\|_{L^{p,q}(\mathbb{R}^n)}}{\|\chi_{B(x,2^{j+1}r)}\|_{L^{p,q}(\mathbb{R}^n)}} \frac{u(x,2^{j+1}r)}{u(x,r)} 
		\leq C \sum_{j=0}^{\infty} 2^{(j+1)\left(\frac{\lambda}{q}-\frac{n}{p}\right)}
		<\infty.
	\end{align}
	Therefore, $u\in \mathbb{W}_{L^{p,q}(\mathbb{R}^n)}^0$. For $u_i$, $i = 1, \dots, m$, by combining $\frac{1}{q} = \sum{i=1}^m \frac{1}{q_i}$ with the condition $\frac{\lambda}{n} < \frac{q_i}{p_i}$, and using \eqref{eq-1127-1} and \eqref{eq-1127-2} for $u$, we deduce that $u_i \in \mathbb{W}_{L^{p_i,q_i}(\mathbb{R}^n)}^0$ and \eqref{eq6} follows. Taking $1 \leq p_i = q_i < \infty$ for $i = 1, \dots, m$ in Theorem \ref{thm3}, it is straightforward to verify that \eqref{eq7} holds.
\end{proof}

\section{Applications to Morrey Spaces with Variable Exponents}\label{pr+4}
Let $\mathcal{P}(\mathbb{R}^n)$ denote the set of all Lebesgue measurable functions $p(\cdot): \mathbb{R}^n \to [1, +\infty]$. The elements of $\mathcal{P}(\mathbb{R}^n)$ are called exponent functions (or simply exponents). Some notation for describing the range of exponent functions is introduced in \cite[Chapter 2]{cru}. 
Given $p(\cdot) \in \mathcal{P}(\mathbb{R}^n)$ and a set $E \subseteq \mathbb{R}^n$, we define:
\[
p_-(E) = \operatorname*{ess~inf}_{x \in E} p(x), \quad p_+(E) = \operatorname*{ess~sup}_{x \in E} p(x).
\]
For simplicity, we write $p_- \coloneqq p_-(\mathbb{R}^n)$ and $p_+ \coloneqq p_+(\mathbb{R}^n)$. It is clear that $1 \leq p_- \leq p_+ \leq \infty$. 
The conjugate exponent function $p'(\cdot)$ of $p(\cdot) \in \mathcal{P}(\mathbb{R}^n)$ is defined by:
\[
\frac{1}{p(x)} + \frac{1}{p'(x)} = 1, \quad x \in \mathbb{R}^n.
\]
Since $p(\cdot)$ is a function, the notation $p'(\cdot)$ might be confused with the derivative of $p(\cdot)$, but we clarify that we will never use the symbol ``$^\prime$" in this sense.

Now we introduce the definition of Lebesgue spaces with variable exponents, denoted by $L^{p(\cdot)}(\mathbb{R}^n)$, as given in \cite[Definition 2.6]{cru}, which play an important role in the treatment of partial differential equations with non-standard growth conditions.

\begin{definition}[Modular]
	Given $p(\cdot)\in \mathcal{P}(\mathbb{R}^n)$ and a Lebesgue measurable function $f$, define the modular functional (or simply the modular) associated with $p(\cdot)$ by
	$$\rho_{p(\cdot)}(f) = \int_{\mathbb{R}^n\backslash \mathbb{R}_{\infty}^n} |f(x)|^{p(x)} dx +\|f\|_{L^\infty(\mathbb{R}^n_\infty)},$$
	where $\mathbb{R}_\infty^n \coloneqq  \{x\in \mathbb{R}^n: p(x)=\infty\}$. For simplicity, denote $\rho_{p(\cdot)}(f)$ as $\rho_{p}(f)$.
\end{definition}

With the modular, the Lebesgue space with variable exponent is defined as
$$L^{p(\cdot)}(\mathbb{R}^n) = \big\{f\in \mathcal{M}(\mathbb{R}^n): \|f\|_{L^{p(\cdot)}(\mathbb{R}^n)} \coloneqq  \inf\left\{\lambda>0: \rho_p({f}/{\lambda}) \leq 1\right\} <\infty \big\}.$$
Notice that if the exponent function $p(x)\equiv p$ with $1\leq p\leq \infty$, then the Lebesgue space with variable exponent coincides with the classical Lebesgue space. Naturally, some results on Lebesgue space are hoped to be extended to this space. In the past 30 years, the Lebesgue spaces with variable exponents $L^{p(\cdot)}(\mathbb{R}^n)$ have attracted the attention of numerous scholars and achieved fruitful results. For the coherence of the text, we now present some basic properties of $L^{p(\cdot)}(\mathbb{R}^n)$. The first one is completeness. 

With the modular, the Lebesgue space with variable exponents is defined as
\[
L^{p(\cdot)}(\mathbb{R}^n) = \left\{ f \in \mathcal{M}(\mathbb{R}^n) : \|f\|_{L^{p(\cdot)}(\mathbb{R}^n)} \coloneqq \inf\left\{ \lambda > 0 : \rho_p\left( \frac{f}{\lambda} \right) \leq 1 \right\} < \infty \right\}.
\]
Note that if the exponent function $p(x) \equiv p$ with $1 \leq p \leq \infty$, the Lebesgue space with variable exponents coincides with the classical Lebesgue space. Naturally, it is hoped that certain results for Lebesgue spaces can be extended to this setting. Over the past three decades, the Lebesgue spaces with variable exponents, $L^{p(\cdot)}(\mathbb{R}^n)$, have garnered significant attention, and substantial progress has been made in this area of research.

 For clarity and coherence, we now present some basic properties of $L^{p(\cdot)}(\mathbb{R}^n)$. The first property we address is completeness.
As is well known, the space $L^{p}(\mathbb{R}^n)$ is complete for $1 \leq p \leq \infty$. It is natural to ask whether $L^{p(\cdot)}(\mathbb{R}^n)$ possesses the same property. The following lemma provides a positive answer.
\begin{lemma}[\!\!\cite{cru}]\label{ref5-thm2.71}
	Given $p(\cdot)\in \mathcal{P}(\rn)$, $L^{p(\cdot)}(\mathbb{R}^n)$ is complete, that is, every Cauchy sequence in $L^{p(\cdot)}(\rn)$ converges in norm.
\end{lemma}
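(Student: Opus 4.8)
The plan is to invoke the classical Riesz--Fischer criterion: a normed space is complete if and only if every absolutely convergent series in it converges in norm. (That $\|\cdot\|_{L^{p(\cdot)}(\rn)}$ is a genuine norm, in particular satisfies the triangle inequality, is standard; see \cite{cru}.) So let $\{f_k\}_{k\in\mathbb{N}}\subseteq L^{p(\cdot)}(\rn)$ satisfy $\sum_{k=1}^{\infty}\|f_k\|_{L^{p(\cdot)}(\rn)}=:M<\infty$, and set $g_N:=\sum_{k=1}^{N}|f_k|$ and $g:=\sum_{k=1}^{\infty}|f_k|$, the latter understood as a pointwise (possibly infinite-valued) limit. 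I will show that $g\in L^{p(\cdot)}(\rn)$ and that $\sum_{k=1}^{N}f_k$ converges in norm.

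First I would establish a Fatou-type property of the norm: if $0\le h_N\uparrow h$ a.e., then $\|h\|_{L^{p(\cdot)}(\rn)}\le\lim_N\|h_N\|_{L^{p(\cdot)}(\rn)}$. This follows by a $\lambda$-argument on the modular: for any $\lambda>\lim_N\|h_N\|_{L^{p(\cdot)}(\rn)}$ one has $\rho_p(h_N/\lambda)\le 1$ for all $N$, and letting $N\to\infty$ the monotone convergence theorem handles the term $\int_{\rn\setminus\mathbb{R}_{\infty}^{n}}(h_N/\lambda)^{p(x)}dx$ while the monotone limit of the $L^\infty$-norms on $\mathbb{R}_{\infty}^{n}$ handles the remaining term, giving $\rho_p(h/\lambda)\le 1$ and hence $\|h\|_{L^{p(\cdot)}(\rn)}\le\lambda$. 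Combined with the triangle inequality $\|g_N\|_{L^{p(\cdot)}(\rn)}\le\sum_{k=1}^{N}\|f_k\|_{L^{p(\cdot)}(\rn)}\le M$ and $g_N\uparrow g$, this yields $\|g\|_{L^{p(\cdot)}(\rn)}\le M<\infty$; in particular $g<\infty$ a.e., so $\sum_{k=1}^{\infty}f_k(x)$ converges absolutely for a.e. $x$ to a function $f$ with $|f|\le g$, whence $f\in L^{p(\cdot)}(\rn)$.

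It remains to prove norm convergence. For the tails $h_N:=\sum_{k=N+1}^{\infty}|f_k|$ we have $0\le h_N\le g\in L^{p(\cdot)}(\rn)$, and applying the Fatou-type property to the partial sums $\sum_{k=N+1}^{K}|f_k|$ as $K\to\infty$ gives the continuous triangle inequality $\|h_N\|_{L^{p(\cdot)}(\rn)}\le\sum_{k=N+1}^{\infty}\|f_k\|_{L^{p(\cdot)}(\rn)}$. Since the right-hand side is the tail of a convergent series, $\|h_N\|_{L^{p(\cdot)}(\rn)}\to 0$; and because $\big|f-\sum_{k=1}^{N}f_k\big|\le h_N$ a.e., the lattice property of the norm gives $\big\|f-\sum_{k=1}^{N}f_k\big\|_{L^{p(\cdot)}(\rn)}\le\|h_N\|_{L^{p(\cdot)}(\rn)}\to 0$. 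Thus every absolutely convergent series in $L^{p(\cdot)}(\rn)$ converges in norm, so $L^{p(\cdot)}(\rn)$ is complete.

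The main obstacle is the Fatou-type monotone convergence property of the norm, where one must carefully juggle the modular — in particular splitting off the set $\mathbb{R}_{\infty}^{n}$ on which $p(\cdot)=\infty$ and combining the monotone convergence theorem for the integral part with monotonicity of $L^\infty$ norms for the essential-supremum part, together with the subtlety of passing from $\rho_p(f/\lambda)\le 1$ for all $\lambda$ above the norm to a control of the norm itself. Everything else — the Riesz--Fischer reduction and the triangle and lattice inequalities — is routine once the basic norm properties of $L^{p(\cdot)}(\rn)$ recorded in \cite{cru} are in hand.
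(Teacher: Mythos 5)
The paper offers no proof of this lemma at all — it is quoted verbatim from \cite{cru} — so the only comparison available is with the standard textbook argument, and your Riesz--Fischer proof is correct and essentially identical to it: the decisive ingredient is the Fatou-type monotone convergence property of the norm, obtained by splitting the modular into the integral part over $\rn\setminus\mathbb{R}^n_\infty$ (handled by monotone convergence) and the essential-supremum part over $\mathbb{R}^n_\infty$, which is exactly how completeness is established in \cite{cru}. The two steps you leave implicit — that $\|g\|_{L^{p(\cdot)}(\rn)}<\infty$ forces $g<\infty$ a.e., and that $\|h\|_{L^{p(\cdot)}(\rn)}<\lambda$ implies $\rho_p(h/\lambda)\le 1$ via monotonicity of the modular — are routine and easily supplied, so I see no gap.
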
 
The following H\"older's inequality in $L^{p(\cdot)}(\mathbb{R}^n)$ is a generalization of the classical result in Lebesgue spaces.
\begin{lemma}\label{lem5}
	Let $m\geq2, p(\cdot),p_{i}(\cdot)\in \mathcal{P}(\mathbb{R}^n)$ $(i=1,\dots,m)$, satisfying $\frac{1}{p(x)}=\sum_{i=1}^m \frac{1}{p_i(x)}$. Then for any $f_i\in L^{p_i(\cdot)}(\mathbb{R}^n)$, $\prod_{i=1}^m f_i\in L^{p(\cdot)}(\mathbb{R}^n)$. Moreover, there exists a constant $C>0$ such that 
	$$\left\|\prod_{i=1}^m f_i\right\|_{L^{p(\cdot)}(\rn)}\leq C\prod_{i=1}^m \|f_i\|_{L^{p_i(\cdot)}(\rn)}.$$
\end{lemma}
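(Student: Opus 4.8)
The plan is to prove Lemma~\ref{lem5} by induction on the number of factors $m$, with the base case $m=2$ being the classical generalized H\"older inequality for variable Lebesgue spaces; the inductive step then reduces the $m$-factor estimate to an $(m-1)$-factor estimate composed with one application of the base case.

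\emph{Base case.} For exponents $p(\cdot),p_1(\cdot),p_2(\cdot)\in\mathcal{P}(\mathbb{R}^n)$ with $\frac{1}{p(x)}=\frac{1}{p_1(x)}+\frac{1}{p_2(x)}$ for a.e.\ $x$, one has $\|f_1f_2\|_{L^{p(\cdot)}(\rn)}\le K(p_1(\cdot),p_2(\cdot))\,\|f_1\|_{L^{p_1(\cdot)}(\rn)}\,\|f_2\|_{L^{p_2(\cdot)}(\rn)}$. This is a standard result (see \cite{cru}); its proof normalizes $\|f_1\|_{L^{p_1(\cdot)}}=\|f_2\|_{L^{p_2(\cdot)}}=1$, splits $\mathbb{R}^n$ according to where each of $p,p_1,p_2$ is finite or infinite, uses the pointwise Young inequality on the region where $p$ is finite, and bounds the product by $\|f_1\|_\infty\|f_2\|_\infty$ on the region where $p$ (hence $p_1$ and $p_2$) equals $\infty$. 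I would invoke it as a black box rather than reprove it.

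\emph{Inductive step.} Assume the assertion holds with $m-1$ factors and some constant $C_{m-1}>0$. Given $p(\cdot),p_1(\cdot),\dots,p_m(\cdot)\in\mathcal{P}(\mathbb{R}^n)$ with $\frac{1}{p(x)}=\sum_{i=1}^m\frac{1}{p_i(x)}$, I introduce the intermediate exponent $q(\cdot)$ defined by $\frac{1}{q(x)}=\sum_{i=1}^{m-1}\frac{1}{p_i(x)}$ (convention $1/\infty=0$). The one genuine check here is that $q(\cdot)\in\mathcal{P}(\mathbb{R}^n)$: since all summands are nonnegative, $0\le\frac{1}{q(x)}\le\frac{1}{q(x)}+\frac{1}{p_m(x)}=\frac{1}{p(x)}\le 1$, so $1\le q(x)\le\infty$. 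Applying the induction hypothesis to $f_1,\dots,f_{m-1}$ with the exponents $q(\cdot),p_1(\cdot),\dots,p_{m-1}(\cdot)$ gives $g\coloneqq\prod_{i=1}^{m-1}f_i\in L^{q(\cdot)}(\rn)$ with $\|g\|_{L^{q(\cdot)}(\rn)}\le C_{m-1}\prod_{i=1}^{m-1}\|f_i\|_{L^{p_i(\cdot)}(\rn)}$. Since $\frac{1}{p(x)}=\frac{1}{q(x)}+\frac{1}{p_m(x)}$, the base case applied to $g$ and $f_m$ yields $\prod_{i=1}^m f_i=g\,f_m\in L^{p(\cdot)}(\rn)$ together with $\big\|\prod_{i=1}^m f_i\big\|_{L^{p(\cdot)}(\rn)}\le K(q(\cdot),p_m(\cdot))\,\|g\|_{L^{q(\cdot)}(\rn)}\,\|f_m\|_{L^{p_m(\cdot)}(\rn)}\le C_m\prod_{i=1}^m\|f_i\|_{L^{p_i(\cdot)}(\rn)}$, where $C_m\coloneqq K(q(\cdot),p_m(\cdot))\,C_{m-1}$, which closes the induction.

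\emph{On the constant and the main difficulty.} Since $m$ is fixed, iterating the base case $m-1$ times produces a finite constant $C=C_m$ depending only on $m$ and on $p_1(\cdot),\dots,p_m(\cdot)$, as claimed. The genuinely delicate ingredient is the base case itself: the ``$L^\infty$ part'' $\mathbb{R}^n_\infty$ appearing in the modular $\rho_p$ forces one to treat the region where the exponents are infinite separately from the integral region. Since that difficulty is already absorbed into the generalized H\"older inequality available in \cite{cru}, in the write-up the whole argument reduces to the elementary remark that the intermediate exponent $q(\cdot)$ stays admissible. If a self-contained proof were preferred instead, one would normalize $\|f_i\|_{L^{p_i(\cdot)}}=1$, decompose $\mathbb{R}^n$ over the finitely many subsets recording which $p_i$ equal $\infty$ at a given point, apply the pointwise Young inequality with the conjugate exponents $p_i(x)/p(x)$ on the finite region, and verify $\rho_p\big(\prod_{i=1}^m f_i/C_0\big)\le 1$ for a suitable $C_0=C_0(m)$; the bookkeeping over those subsets is the only obstacle in that route.
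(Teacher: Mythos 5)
Your proposal is correct and follows essentially the same route as the paper: induction on $m$ with the two-factor generalized H\"older inequality from \cite{cru} as the base case, an intermediate exponent defined by $\frac{1}{q(x)}=\sum_{i=1}^{m-1}\frac{1}{p_i(x)}$ (the paper's $r_k(\cdot)$), and the admissibility check $q(x)\geq p(x)\geq 1$. The only difference is cosmetic: you apply the induction hypothesis before the base case, whereas the paper peels off the last factor first, which is the same argument in reverse order.
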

Before proving Lemma \ref{lem5}, we first present the following result from \cite[Corollary 2.28]{cru}.
\begin{lemma}[\!\!\cite{cru}]\label{ref5-cor2.28}
	Given exponent functions $r(\cdot), q(\cdot)\in \mathcal{P}(\rn)$, define $p(\cdot)\in \mathcal{P}(\rn)$ by
	$$\frac{1}{p(x)}=\frac{1}{q(x)}+\frac{1}{r(x)}.$$
	Then there exists a constant $C$ such that for all $f\in L^{q(\cdot)}(\rn)$ and $g\in L^{r(\cdot)}(\rn)$, $fg\in L^{p(\cdot)}(\rn)$ and
	$$\|fg\|_{L^{p(\cdot)}(\rn)} \le C\|f\|_{L^{q(\cdot)}(\rn)} \|g\|_{L^{r(\cdot)}(\rn)}.$$
\end{lemma}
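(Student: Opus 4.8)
The plan is to prove this two-factor Hölder inequality directly from the pointwise weighted arithmetic--geometric mean inequality, together with the unit-ball property relating the modular $\rho_{p(\cdot)}$ to its Luxemburg norm. First I would dispose of the degenerate cases: if $\|f\|_{L^{q(\cdot)}(\rn)}=0$ or $\|g\|_{L^{r(\cdot)}(\rn)}=0$ then $fg=0$ a.e.\ and the inequality is trivial, while if either of these norms equals $+\infty$ there is nothing to prove. So assume both norms lie in $(0,\infty)$; by homogeneity of the Luxemburg norm it suffices to treat $F\coloneqq f/\|f\|_{L^{q(\cdot)}(\rn)}$ and $G\coloneqq g/\|g\|_{L^{r(\cdot)}(\rn)}$, which satisfy $\|F\|_{L^{q(\cdot)}(\rn)}=\|G\|_{L^{r(\cdot)}(\rn)}=1$. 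By the standard unit-ball property of variable Lebesgue spaces this yields $\rho_{q(\cdot)}(F)\le1$ and $\rho_{r(\cdot)}(G)\le1$; in particular $|F|\le1$ a.e.\ on $\{x\in\rn:q(x)=\infty\}$ and $|G|\le1$ a.e.\ on $\{x\in\rn:r(x)=\infty\}$.

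The heart of the argument is the pointwise bound $|F(x)G(x)|^{p(x)}\le|F(x)|^{q(x)}+|G(x)|^{r(x)}$ for a.e.\ $x$ with $p(x)<\infty$. Note that $\frac1{p(x)}=\frac1{q(x)}+\frac1{r(x)}$ forces $\{p=\infty\}=\{q=\infty\}\cap\{r=\infty\}$, so on $\{p<\infty\}$ at least one of $q(x),r(x)$ is finite. When both are finite, set $\theta(x)=p(x)/q(x)$, so $1-\theta(x)=p(x)/r(x)$ and $\theta(x)\in(0,1)$; applying $a^{\theta}b^{1-\theta}\le\theta a+(1-\theta)b\le a+b$ with $a=|F(x)|^{q(x)}$ and $b=|G(x)|^{r(x)}$ gives exactly the claimed inequality. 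On the mixed part where $q(x)=\infty$ one has $r(x)=p(x)<\infty$ and $|F(x)|\le1$, hence $|FG(x)|^{p(x)}\le|G(x)|^{r(x)}$; symmetrically when $r(x)=\infty$. Integrating over $\{p<\infty\}$ and reassembling the three pieces gives $\int_{\{p<\infty\}}|FG|^{p(x)}\,dx\le\rho_{q(\cdot)}(F)+\rho_{r(\cdot)}(G)\le2$, while on $\{p=\infty\}=\{q=\infty\}\cap\{r=\infty\}$ we have $|FG|\le1$ a.e., so $\|FG\|_{L^{\infty}(\{p=\infty\})}\le1$. Adding the two contributions gives $\rho_{p(\cdot)}(FG)\le3$.

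Finally I would pass from this modular estimate back to the norm via the scaling inequality $\rho_{p(\cdot)}(h/\lambda)\le\lambda^{-1}\rho_{p(\cdot)}(h)$, valid for every $\lambda\ge1$ because $\lambda^{p(x)}\ge\lambda$ when $p(x)\ge1$ and the $L^{\infty}$-part of the modular scales by exactly $\lambda^{-1}$. Taking $\lambda=3$ yields $\rho_{p(\cdot)}(FG/3)\le1$, hence $\|FG\|_{L^{p(\cdot)}(\rn)}\le3$ by definition of the Luxemburg norm, and undoing the normalization gives $\|fg\|_{L^{p(\cdot)}(\rn)}\le3\,\|f\|_{L^{q(\cdot)}(\rn)}\|g\|_{L^{r(\cdot)}(\rn)}<\infty$; in particular $fg\in L^{p(\cdot)}(\rn)$. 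Thus the lemma holds with $C=3$ (the precise value of the constant is immaterial).

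The main obstacle is not any deep inequality but the careful handling of the sets where the exponents attain the value $+\infty$: there the modular carries a separate $L^{\infty}$ term, and the pointwise weighted AM--GM step is only available once all three of $p(x),q(x),r(x)$ are finite, so one must exploit the identity $\frac1p=\frac1q+\frac1r$ to see that these exceptional sets nest correctly and that the normalization forces $|F|,|G|\le1$ on them. A secondary point is that $\rho_{p(\cdot)}$ is not homogeneous, which is precisely why the concluding rescaling step is genuinely needed rather than decorative. An alternative would be to argue via the associate-space duality formula for $\|\cdot\|_{L^{p(\cdot)}}$ combined with the conjugate Hölder inequality $\int_{\rn}|uv|\,dx\lesssim\|u\|_{L^{p(\cdot)}(\rn)}\|v\|_{L^{p'(\cdot)}(\rn)}$, but matching exponents in that formulation requires essentially the same case analysis, so the direct route above seems cleanest.
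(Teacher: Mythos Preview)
Your argument is correct. The paper does not supply its own proof of this lemma; it merely quotes the result from \cite[Corollary~2.28]{cru} and then uses it to deduce the $m$-fold H\"older inequality by induction. Your self-contained proof via pointwise Young's inequality, the case analysis on $\{q=\infty\}$, $\{r=\infty\}$, $\{p=\infty\}$, and the modular scaling $\rho_{p(\cdot)}(h/\lambda)\le\lambda^{-1}\rho_{p(\cdot)}(h)$ for $\lambda\ge1$ is exactly the standard argument from that reference, and the constant $C=3$ you obtain is the same order as the one given there.
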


\begin{proof}[Proof of Lemma \ref{lem5}]
The idea for the proof is based on \cite{cru}. By induction, the case for $m = 2$ is established by Lemma \ref{ref5-cor2.28}. We assume that the conclusion holds for $m = k$ and proceed with the proof for $m = k+1$.
	
	Define $r_k(x) = \frac{\prod_{i=1}^k p_{i}(x)}{\sum_{i=1}^k \prod_{j\neq i} p_j(x)}$. Notice that $\frac{1}{p(x)} = \sum_{i=1}^{k+1} \frac{1}{p_i(x)}$ with $p(\cdot) \in \mathcal{P}(\mathbb{R}^n)$, which implies that $\frac{1}{r_k(x)} + \frac{1}{p_{k+1}(x)} = \frac{1}{p(x)}$ and $r_k(\cdot) \in \mathcal{P}(\mathbb{R}^n)$, given that $r_k(x) \geq p(x) \geq 1$. Repeated application of Lemma \ref{ref5-cor2.28} results in
	\begin{align*}
		\left\|\prod_{i=1}^{k+1} f_i\right\|_{L^{p(\cdot)}(\rn)} &\leq C\left\|\prod_{i=1}^k f_i\right\|_{L^{r_k(\cdot)}(\rn)} \left\|f_{k+1}\right\|_{L^{p_{k+1}(\cdot)}(\rn)}\\
		&\leq C \prod_{i=1}^{k+1} \left\| f_i\right\|_{L^{p_{i}(\cdot)}(\rn)},
	\end{align*}
	which completes the proof.
\end{proof}
Although the basic theory of spaces with variable exponents only requires that $p(\cdot)$ is a measurable function, to ensure that operators of interest, such as the Hardy-Littlewood maximal operator, are bounded in these spaces, we often assume that $p(\cdot)$ satisfies additional regularity conditions. Two important continuity conditions are locally  $\log$-H\"older continuity and $\log$-H\"older decay condition.

\begin{definition}[$\log$-H\"older]
	Given a function $\alpha: \mathbb{R}^n\to \mathbb{R}$. The function $\alpha$ is said locally $\log$-H\"older continuous on $\mathbb{R}^n$ if there exists $C_1>0$ such that 
	$$|\alpha(x)-\alpha(y)|\leq \frac{C_1}{\log(e+1/|x-y|)}$$
	for all $x,y\in \mathbb{R}^n$. We say that $\alpha$ satisfies the $\log$-H\"older decay condition if there exists $\alpha_\infty\in \mathbb{R}$ and another constant $C_2$ such that 
	$$|\alpha(x)-\alpha_{\infty}| \leq \frac{C_2}{\log (e+|x|)}$$
	for all $x\in \mathbb{R}$. And $\alpha$ is called globally $\log$-H\"older continuous on $\mathbb{R}^n$ if it is locally $\log$-H\"older continuous and satisfies the $\log$-H\"older decay condition. The constants $C_1$ and $C_2$ are called the local $\log$-H\"older constant and the log-H\"older decay constant, respectively. The maximum $\max\{C_1,C_2\}$ is just called the $\log$-H\"older constant of $\alpha$.
\end{definition}
We now briefly review the  $\log$-H\"older condition, which was first introduced in the context of variable exponents by Zhikov \cite{zhi}, who considered the averaging of variational problems with stochastic Lagrangians. Later on, the local  $\log$-H\"older condition was shown to be sufficient for the boundedness of many operators. For instance, \cite{cru} proves the $L^{p(\cdot)}$ boundedness with $p_{-} > 1$ and the weak $L^{p(\cdot)}$ boundedness with $p_{-} = 1$ of Hardy-Littlewood maximal operators, \cite{cru5} establishes the $L^{p(\cdot)}$ boundedness with $1 < p_{-} \leq p_{+} < \infty$ of Calder\'on-Zygmund operators, and the $L^{p(\cdot)} \to L^{q(\cdot)}$ boundedness of the fractional maximal operator where $q(\cdot) \in \mathcal{P}(\mathbb{R}^n)$ is defined by
	\begin{align*}
		\frac{1}{p(x)}-\frac{1}{q(x)}=\frac{\alpha}{n}, \quad x\in \rn,
	\end{align*}
	when $p_{+}\leq \frac{n}{\alpha}$ for some $\alpha\in [0,n)$, and the $L^{p(\cdot)} \to L^{q(\cdot)}$ boundedness with $1<p_{-}\leq p_{+} <\frac{n}{\alpha}$ of Riesz potentials. 
	However, the local $\log$-H\"older condition is not necessary for the boundedness of any of the above operators, refer to \cite{cru,cru5}. 	

Below, we present the class of variable exponents, as defined in \cite[Definition 4.1.4]{die}, which is useful in the study of function spaces with variable exponents.
\begin{definition}[Variable exponents class]
	The class of variable exponents is defined by
	$$\mathcal{P}_{\log}(\mathbb{R}^n) \coloneqq  \big\{p(\cdot)\in \mathcal{P}(\mathbb{R}^n): \frac{1}{p(\cdot)} \ \text{is globally}\ \log-\text{H\"older continuous}\big\}.$$
\end{definition}
The following two lemmas characterize the relationships between the exponent functions $p(x)$ and $p'(x)$, see \cite[Propposition 2.37]{cru} and \cite[Remark 4.1.5, Theorem 4.3.8]{die}.
\begin{lemma}[\!\!\cite{die}]\label{lem6}
	If $p\in \mathcal{P}(\rn)$ with $p_+<\infty$, then $p\in \mathcal{P}_{\log}(\rn)$ if and only if $p'\in \mathcal{P}_{\log}(\rn)$, and	$p$ is a globally $\log$-H\"older continuous if and only if $p\in \mathcal{P}_{\log}(\rn)$.
\end{lemma}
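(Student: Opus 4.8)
The plan is to reduce the entire statement to two elementary facts: that inversion $t\mapsto 1/t$ is bi-Lipschitz on compact subintervals of $(0,\infty)$, and that adding a constant to a function changes neither its local $\log$-H\"older modulus nor, up to shifting the limiting value, its $\log$-H\"older decay. Throughout, recall that $p\in\mathcal{P}(\rn)$ forces $p_-\ge 1$, and the hypothesis $p_+<\infty$ gives $p(x)\in[p_-,p_+]\subseteq[1,\infty)$ and hence $1/p(x)\in[1/p_+,1/p_-]\subseteq(0,1]$ for a.e.\ $x$.

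First I would prove that $p$ is globally $\log$-H\"older continuous if and only if $p\in\mathcal{P}_{\log}(\rn)$, i.e.\ if and only if $1/p$ is globally $\log$-H\"older continuous. The key identity is $\bigl|\tfrac1{p(x)}-\tfrac1{p(y)}\bigr|=|p(x)-p(y)|/(p(x)p(y))$ together with $p_-^2\le p(x)p(y)\le p_+^2$, which gives
\[
\frac{1}{p_+^{2}}\,|p(x)-p(y)|\ \le\ \Bigl|\tfrac1{p(x)}-\tfrac1{p(y)}\Bigr|\ \le\ \frac{1}{p_-^{2}}\,|p(x)-p(y)|,
\]
so the local $\log$-H\"older inequality for $p$ with constant $C_1$ is equivalent to that for $1/p$ with constant between $C_1/p_+^2$ and $C_1/p_-^2$. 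For the decay condition, one first observes that if $|p(x)-p_\infty|\le C_2/\log(e+|x|)$ for a.e.\ $x$, then letting $|x|\to\infty$ forces $p_\infty\in[p_-,p_+]$; hence $p(x)p_\infty\in[p_-^2,p_+^2]$, and the same two-sided comparison transfers the decay condition for $p$ with value $p_\infty$ to one for $1/p$ with value $1/p_\infty$, and conversely. Combining the local and decay parts yields the desired characterization.

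Next I would prove $p\in\mathcal{P}_{\log}(\rn)\iff p'\in\mathcal{P}_{\log}(\rn)$. This is immediate from $\tfrac1{p'(x)}=1-\tfrac1{p(x)}$: subtracting the constant $1$ leaves difference quotients unchanged, so $1/p$ and $1/p'$ satisfy the same local $\log$-H\"older inequality, and $1/p$ satisfies the decay condition with limiting value $\ell$ if and only if $1/p'$ does with limiting value $1-\ell$. No hypothesis on $p'_+$ is needed here, since $1/p'$ is bounded in any case. The only point requiring a moment's thought is the one in the previous paragraph --- that the decay constant $p_\infty$ may be taken in $[p_-,p_+]$, so that $p(x)p_\infty$ stays in $[p_-^2,p_+^2]$ and all comparison constants are independent of $x$ --- and this is settled by the $|x|\to\infty$ argument above; everything else is bookkeeping with the bi-Lipschitz bound for $t\mapsto 1/t$ on $[p_-,p_+]$.
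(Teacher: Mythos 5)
Your proof is correct. The paper does not prove this lemma at all --- it is imported from Diening et al.\ \cite{die} (see also \cite{cru}) --- so there is no in-paper argument to compare against; your self-contained proof is the standard one and is complete. The two ingredients you use do the job: the identity $\bigl|\tfrac1{p(x)}-\tfrac1{p(y)}\bigr|=|p(x)-p(y)|/(p(x)p(y))$ together with $1\le p_-\le p(x)\le p_+<\infty$ makes $t\mapsto 1/t$ bi-Lipschitz on the relevant range, which transfers both the local $\log$-H\"older estimate and the decay condition between $p$ and $1/p$ --- and you correctly flag the one nontrivial point, namely that any admissible limit value $p_\infty$ (resp.\ $\ell$ for $1/p$) must lie in $[p_-,p_+]$ (resp.\ $[1/p_+,1/p_-]$), which your $|x|\to\infty$ argument settles; and the identity $\tfrac1{p'}=1-\tfrac1p$ makes the equivalence $p\in\mathcal{P}_{\log}(\rn)\Leftrightarrow p'\in\mathcal{P}_{\log}(\rn)$ immediate, with no boundedness assumption on $p'$ needed, exactly as you say. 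The only cosmetic caveat is that, since $p$ is merely measurable, the pointwise $\log$-H\"older conditions should be read for a.e.\ $x,y$ (or for a fixed representative), but this does not affect any step of your argument.
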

\begin{lemma}[\!\!\cite{die,cru}]\label{lem7}
	Given $p(\cdot)\in \mathcal{P}(\rn)$, the associate space of $L^{p(\cdot)}(\rn)$ is equal to $L^{p'(\cdot)}(\rn)$, that is,
	$$\left(L^{p(\cdot)}(\rn)\right)' = L^{p'(\cdot)}(\rn).$$ 
	Moreover, $\|\cdot\|_{L^{p'(\cdot)}(\rn)}$ and $\|\cdot\|_{L^{p'(\cdot)}(\rn)}'$ are equivalent norms. In addition, if $p(\cdot)\in \mathcal{P}_{\log}(\rn)$ with $p_->1$, then  for all $f\in L^{p(\cdot)}(\rn)$,there exists a constant $C>0$ such that 
	$$\|Mf\|_{L^{p(\cdot)}(\rn)} \leq C(p_-)' \|f\|_{L^{p(\cdot)}(\rn)}.$$

\end{lemma}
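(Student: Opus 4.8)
The three assertions of Lemma~\ref{lem7} are standard facts from the theory of variable Lebesgue spaces, and a self-contained argument would essentially reproduce the relevant portions of \cite{cru,die}. The plan is to first settle the associate-space identification (with the accompanying norm equivalence) by comparing the associate norm of $L^{p(\cdot)}(\rn)$ with the modular $\rho_{p'}$ attached to $p'(\cdot)$, and then to deduce the quantitative maximal inequality from the global log-H\"older hypothesis via the usual pointwise ``key estimate''.

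For the identity $(L^{p(\cdot)}(\rn))' = L^{p'(\cdot)}(\rn)$ I would prove the two inclusions separately. The inclusion $L^{p'(\cdot)}(\rn)\subseteq (L^{p(\cdot)}(\rn))'$, together with $\|g\|_{(L^{p(\cdot)})'}\leq C\|g\|_{L^{p'(\cdot)}}$, is immediate from the two-exponent generalized H\"older inequality applied to the pair $(L^{p(\cdot)},L^{p'(\cdot)})$, whose reciprocals sum to the constant $1$; this is precisely Lemma~\ref{ref5-cor2.28}. For the reverse inclusion I would fix $g$ with $\|g\|_{(L^{p(\cdot)})'}<\infty$, truncate it to $g_N \coloneqq g\,\chi_{\{|g|\leq N\}\cap B(0,N)}$ (bounded with compact support, hence in $L^{p'(\cdot)}(\rn)$), pick $\lambda>0$ with $\rho_{p'}(g_N/\lambda)=1$, and test the associate norm against $h_N \coloneqq (|g_N|/\lambda)^{p'(\cdot)-1}$. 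Using the identity $(p'(x)-1)p(x)=p'(x)$ one gets $\rho_{p}(h_N)=\rho_{p'}(g_N/\lambda)=1$, so $\|h_N\|_{L^{p(\cdot)}}\leq 1$, while $\int_{\rn}|g_N h_N|\,dx = \lambda\,\rho_{p'}(g_N/\lambda)=\lambda$; combining these with the monotonicity of the associate norm gives $\lambda\leq \|g_N\|_{(L^{p(\cdot)})'}\leq \|g\|_{(L^{p(\cdot)})'}$, that is $\|g_N\|_{L^{p'(\cdot)}}\leq \|g\|_{(L^{p(\cdot)})'}$. Letting $|g_N|\uparrow|g|$ and applying Fatou's lemma for the modular then yields $g\in L^{p'(\cdot)}(\rn)$ with $\|g\|_{L^{p'(\cdot)}}\leq \|g\|_{(L^{p(\cdot)})'}$, and the two estimates together are the asserted norm equivalence $\|\cdot\|_{L^{p'(\cdot)}}\simeq\|\cdot\|_{(L^{p(\cdot)})'}$.

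For the maximal inequality, assuming $\tfrac1{p(\cdot)}$ is globally log-H\"older and $p_->1$, I would invoke the standard key estimate: there exist a constant $C=C(n,p(\cdot))$ and a function $G\in L^1(\rn)\cap L^\infty(\rn)$ depending only on $n$ and $p(\cdot)$ (for instance $G(x)=(e+|x|)^{-np_-}$, which lies in $L^1(\rn)$ exactly because $p_->1$) such that, for every $f$ with $\|f\|_{L^{p(\cdot)}}\leq 1$, every cube $Q$ and every $x\in Q$,
\[
\Big(\frac1{|Q|}\int_Q|f(y)|\,dy\Big)^{p(x)}\leq C\Big(\frac1{|Q|}\int_Q|f(y)|^{p(y)}\,dy + G(x) + \frac1{|Q|}\int_Q G(y)\,dy\Big),
\]
the two error terms being harmless because the local log-H\"older condition keeps the oscillation of $p$ over $Q$ small and the log-H\"older decay keeps $p(x)$ near $p_\infty$ for large $|x|$. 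Then I would split $f=f\chi_{\{|f|>1\}}+f\chi_{\{|f|\leq1\}}$, feed this bound into the definition of $Mf$, and apply the classical Hardy--Littlewood maximal theorem on $L^{p_-}(\rn)$ (whose operator norm is $\lesssim(p_-)'$) together with the trivial $L^\infty$ bound; this gives $\rho_p\big(Mf/(C(p_-)')\big)\leq 1$ whenever $\rho_p(f)\leq 1$, i.e.\ $\|Mf\|_{L^{p(\cdot)}}\leq C(p_-)'\|f\|_{L^{p(\cdot)}}$ with $C$ depending only on $n$ and the log-H\"older constants of $p(\cdot)$.

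The hard part will be tracking the sharp constant $C(p_-)'$ in this last step: it is not visible in the qualitative form of Diening's maximal theorem and has to be extracted by isolating the single place in the argument where the classical $L^{p_-}\!\to L^{p_-}$ inequality, with its $(p_-)'$ blow-up as $p_-\downarrow1$, is used. The associate-space portion is otherwise a routine manipulation of modulars. Since Lemma~\ref{lem7} enters only as a black box in the sequel, it is legitimate to cite \cite{cru,die} for the complete details.
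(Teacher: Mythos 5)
The paper offers no proof of Lemma \ref{lem7}: it is imported verbatim from \cite[Proposition 2.37]{cru} and \cite[Remark 4.1.5, Theorem 4.3.8]{die}, and your sketch reproduces exactly the standard arguments behind those results (generalized H\"older plus the modular duality trick $h_N=(|g_N|/\lambda)^{p'(\cdot)-1}$ for the associate space, and Diening's key pointwise estimate combined with the classical $L^{p_-}$ maximal theorem for the bound $C(p_-)'$), so your approach coincides with what the paper implicitly relies on and citing the references is appropriate. Two small points to keep in mind if you were to write it out in full: in the duality step the sets where $p(x)=1$ or $p(x)=\infty$ must be treated separately, since there the modular contains the $L^\infty$ term and the exponent manipulation $(p'(x)-1)p(x)=p'(x)$ degenerates; and the explicit $(p_-)'$ dependence you flag as the delicate part is in fact recorded in \cite[Theorem 4.3.8]{die}, so no new constant-tracking is needed beyond the cited statement.
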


Obviously, the Morrey spaces with variable exponents $M_{L^{p(\cdot)}(\mathbb{R}^n)}^u$ are obtained by replacing $X$ in Definition \ref{def-mb} with $L^{p(\cdot)}(\mathbb{R}^n)$, where $p(\cdot) \in \mathcal{P}(\mathbb{R}^n)$, that is,
$$
M_{L^{p(\cdot)}(\mathbb{R}^n)}^{u} = \Big\{ f \in \mathcal{M}(\mathbb{R}^n): \|f\|_{M_{L^{p(\cdot)}(\mathbb{R}^n)}^{u}} \coloneqq \sup_{x\in \mathbb{R}^n, r>0} \frac{1}{u(x,r)} \left\|f \chi_{B(x,r)}\right\|_{L^{p(\cdot)}(\rn)} < \infty\Big\},
$$
with $u(x,r): \mathbb{R}^n \times (0,+\infty) \to (0,+\infty)$ be a Lebesgue measurable function.

Considering Lemmas \ref{ref5-thm2.71}-\ref{lem7} and Theorem \ref{thm2}, we conclude the following result.
\begin{theorem}\label{thm4}
	Let $I \coloneqq  \{i_1,\dots,i_l\} = \{1,\dots,l\} \subseteq \{1,\dots,m\}$, $\frac{1}{p(x)}=\frac{1}{p_1(x)}+\cdots+\frac{1}{p_m(x)}$ with $p_i\in \mathcal{P}_{\log}(\mathbb{R}^n)$ and $1< (p_i)_- \leq (p_i)_+ <\infty$, and $u=\prod_{i=1}^m u_i$ with $u_i \in \mathbb{W}_{L^{p_i(\cdot)}(\mathbb{R}^n)}^0, 1\leq i\leq m$. Suppose that $\vec{b} \in \mathrm{BMO}^l$ and $\mathcal{T}_{\vec{b}}$ satisfies Hypothesis \ref{hyp}. If $p\in \mathcal{P}_{\log}(\mathbb{R}^n)$ meeting $1<p_-\leq p_+<\infty$ and $u\in \mathbb{W}_{L^{p_i(\cdot)}(\mathbb{R}^n)}^0$, then
	\begin{align*}
		\left\|\mathcal{T}_{\vec{b}}(\vec{f})\right\|_{M_{L^{p(\cdot)}(\mathbb{R}^n)}^{u}} 
		\leq & C \|\vec{b}\|_{\text{BMO}} 
		\|M\|_{\mathfrak{B}_{L^{p'(\cdot), u}}}^{2+l}
		\|\mathscr{M}\|_{M_{L^{p_1(\cdot)}(\mathbb{R}^n)}^{u_1} \times \cdots \times M_{L^{p_m(\cdot)}(\mathbb{R}^n)}^{u_m} \to M_{L^{p(\cdot)}(\mathbb{R}^n)}^{u}} \\
		&\times \prod_{i=1}^m \|M\|_{M_{L^{p_i(\cdot)}(\mathbb{R}^n)}^{u_i} \to M_{L^{p_i(\cdot)}(\mathbb{R}^n)}^{u_i}} 
		\|f_i\|_{M_{L^{p_i(\cdot)}(\mathbb{R}^n)}^{u_i}}.
	\end{align*}
\end{theorem}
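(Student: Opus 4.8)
The plan is to derive Theorem \ref{thm4} directly from Theorem \ref{thm2}(1), applied with the Banach function spaces $X \coloneqq L^{p(\cdot)}(\mathbb{R}^n)$ and $X_i \coloneqq L^{p_i(\cdot)}(\mathbb{R}^n)$ ($i=1,\dots,m$), and with the exponent called ``$p$'' in Theorem \ref{thm2} taken to be $1$. Observe that, with these identifications and $X'=L^{p'(\cdot)}(\mathbb{R}^n)$, the conclusion of Theorem \ref{thm4} is literally the conclusion of Theorem \ref{thm2}(1), including the exponent $l+2$ on $\|M\|_{\mathfrak{B}_{X',u}}$. Hence the whole argument reduces to checking, one at a time, that the structural hypotheses of Theorem \ref{thm2} are met by this family of spaces.

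First I would record the two ``structural'' facts about the spaces themselves. Every $L^{q(\cdot)}(\mathbb{R}^n)$ with $1<q_-\leq q_+<\infty$ is a Banach function space in the sense of Definition \ref{def-bfs} (see the discussion following that definition, and \cite{cru,die}); this covers $X$ and each $X_i$ under the standing assumptions $1<p_-\leq p_+<\infty$ and $1<(p_i)_-\leq(p_i)_+<\infty$. Moreover, writing $\chi_B=\prod_{i=1}^m\chi_B$ pointwise and using $\tfrac1{p(x)}=\sum_{i=1}^m\tfrac1{p_i(x)}$, the H\"older inequality for variable exponents (Lemma \ref{lem5}) provides a constant $C>0$ with
$$\|\chi_B\|_{L^{p(\cdot)}(\mathbb{R}^n)}\leq C\prod_{i=1}^m\|\chi_B\|_{L^{p_i(\cdot)}(\mathbb{R}^n)}$$
for every ball $B\subseteq\mathbb{R}^n$, which is exactly the comparison $\|\chi_B\|_X\leq C\prod_i\|\chi_B\|_{X_i}$ demanded in Theorem \ref{thm2}.

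Next I would verify the maximal-operator hypotheses. Since $p_i\in\mathcal{P}_{\log}(\mathbb{R}^n)$ with $(p_i)_->1$, Lemma \ref{lem7} gives the boundedness of $M$ on $L^{p_i(\cdot)}(\mathbb{R}^n)$, so $X_i\in\mathbb{M}$. For the associate spaces, since $p,p_i\in\mathcal{P}_{\log}(\mathbb{R}^n)$ with $p_+,(p_i)_+<\infty$, Lemma \ref{lem6} transfers the global $\log$-H\"older regularity to $p'$ and $p_i'$, and the identity $q'(x)=q(x)/(q(x)-1)$ together with its monotonicity in $q(x)$ yields $(p')_-=(p_+)'>1$ and $(p_i')_-=((p_i)_+)'>1$; Lemma \ref{lem7} then shows $M$ is bounded on $L^{p'(\cdot)}(\mathbb{R}^n)=(L^{p(\cdot)}(\mathbb{R}^n))'$ and on $L^{p_i'(\cdot)}(\mathbb{R}^n)=(X_i)'=(X_i^1)'$, i.e.\ $X'\in\mathbb{M}$ and $(X_i^p)'\in\mathbb{M}$ with $p=1$. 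Finally, from the pointwise bound $\mathscr{M}(\vec{f})(x)\leq\prod_{i=1}^m Mf_i(x)$, Lemma \ref{lem5}, and the $L^{p_i(\cdot)}$-boundedness of $M$ just obtained, one gets
$$\|\mathscr{M}(\vec{f})\|_{L^{p(\cdot)}(\mathbb{R}^n)}\leq C\prod_{i=1}^m\|Mf_i\|_{L^{p_i(\cdot)}(\mathbb{R}^n)}\leq C\prod_{i=1}^m\|f_i\|_{L^{p_i(\cdot)}(\mathbb{R}^n)},$$
so $\mathscr{M}\colon X_1\times\cdots\times X_m\to X$ is bounded, which is the hypothesis of Theorem \ref{thm2} in the case $p=1$. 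Since $\vec{b}\in\mathrm{BMO}^l$, $\mathcal{T}_{\vec{b}}$ satisfies Hypothesis \ref{hyp}, and the weight conditions $u_i\in\mathbb{W}_{L^{p_i(\cdot)}(\mathbb{R}^n)}^0$ and $u=\prod_iu_i\in\mathbb{W}_{L^{p(\cdot)}(\mathbb{R}^n)}^0$ are part of the statement, all the hypotheses of Theorem \ref{thm2}(1) are in place, and invoking it gives precisely the claimed estimate.

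I expect no essential obstacle here: the work is a routine check that the abstract machinery of Theorem \ref{thm2} applies. The one place calling for a bit of care is the treatment of the conjugate exponents — identifying $(L^{p(\cdot)})'$ with $L^{p'(\cdot)}$, transferring global $\log$-H\"older continuity from $p$ and $p_i$ to $p'$ and $p_i'$, and confirming that the lower exponents $(p')_-,(p_i')_-$ remain strictly above $1$ — since this is exactly what secures the conditions $X'\in\mathbb{M}$ and $(X_i^p)'\in\mathbb{M}$ on which Theorem \ref{thm2} crucially relies.
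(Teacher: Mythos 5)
Your proposal is correct and follows essentially the same route as the paper: both reduce Theorem \ref{thm4} to Theorem \ref{thm2} with $p=1$ by checking that $L^{p(\cdot)}(\mathbb{R}^n)$ and the $L^{p_i(\cdot)}(\mathbb{R}^n)$ are Banach function spaces, that $X', X_i', X_i \in \mathbb{M}$ via Lemmas \ref{lem6} and \ref{lem7} (using $(p')_-=(p_+)'>1$), and that $\mathscr{M}$ is bounded from the product space into $L^{p(\cdot)}(\mathbb{R}^n)$ through the pointwise bound $\mathscr{M}(\vec{f})\leq \prod_{i=1}^m Mf_i$ combined with Lemma \ref{lem5}. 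The only cosmetic difference is that you get $\|\chi_B\|_{L^{p(\cdot)}(\mathbb{R}^n)}\leq C\prod_{i=1}^m\|\chi_B\|_{L^{p_i(\cdot)}(\mathbb{R}^n)}$ directly from the variable-exponent H\"older inequality, whereas the paper proves the two-sided equivalence via the harmonic-mean formula $\|\chi_{B}\|_{L^{p(\cdot)}(\mathbb{R}^n)}\simeq |B|^{1/p_B}$ (which it later reuses in the proof of Corollary \ref{cor2}); for Theorem \ref{thm4} itself your one-sided estimate is all that is needed.
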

\begin{proof}[Proof of Theorem \ref{thm4}]
	For $i=1,\dots,m$, when $1<p_-,(p_i)_- \leq p_+, (p_i)_+<\infty$, $L^{p(\cdot)}(\rn)$ and $L^{p_i(\cdot)}(\rn)$ are all Banach spaces by invoking Lemma \ref{ref5-thm2.71}. In order to verify that $\|\chi_B\|_{X} \leq C\prod_{i=1}^m \|\chi_B\|_{X_i}$, we consider the norm of $\chi_{B(x,r)}$ in $L^{p(\cdot)}(\rn)$. For any ball $B(x,r)\subseteq \rn$, define the harmonic means of $p$ and $p'$ as 
	$$\frac{1}{p_B} = \frac{1}{|B|} \int_B \frac{1}{p(x)} dx,\quad \frac{1}{p'_B} = \frac{1}{|B|} \int_B \frac{1}{p'(x)}dx.$$
	Notice that $p,p_i\in \mathcal{P}_{\log}(\rn)$ and $(p_i)_-, p_i>1$, it follows from \cite[Page 10]{cha} that
	$$\|\chi_B\|_{L^{p(\cdot)}(\rn)} \simeq |B(x,r)|^{\frac{1}{p_B}},\quad \|\chi_{B(x,r)}\|_{L^{p'(\cdot)}(\rn)} \simeq |B(x,r)|^{\frac{1}{p'_B}}.$$
Noting that $\sum_{i=1}^m \frac{1}{p_{i,B}} = \frac{1}{p_B}$, where $\frac{1}{p_{i,B}} \coloneqq \frac{1}{|B|} \int_B \frac{1}{p_{i}(x)} dx, i=1,\dots,m$, we have
	\begin{align}\label{eq9}
		\|\chi_{B(x,r)}\|_{L^{p(\cdot)}(\rn)} 
		\simeq |B(x,r)|^{\frac{1}{p_B}} 
		= |B(x,r)|^{\sum_{i=1}^m \frac{1}{p_{i,B}}} 
		= \prod_{i=1}^m |B(x,r)|^{\frac{1}{p_{i,B}}} 
		\simeq \prod_{i=1}^m \|\chi_{B(x,r)}\|_{L^{p_i(\cdot)}(\rn)}.
	\end{align}

 Following the definition of $p_-$, we obtain that $(p_-)'=(p'(\cdot))_+$, $(p_+)'=(p'(\cdot))_-$. Thus, $p_+<\infty$ implies that $(p'(\cdot))_->1$. According to Lemma \ref{lem7}, $L^{p(\cdot)}(\rn) \in \mathbb{M}'\cap \mathbb{M}, L^{p_i(\cdot)}(\rn) \in \mathbb{M}'\cap\mathbb{M}, i=1,\dots,m$, which indicates $\mathscr{M}$ is bounded on $L^{p(\cdot)}(\rn)$. In fact, Lemma \ref{lem5} guarantee that 
	\begin{align*}
		\|\mathscr{M}(\vec{f})\|_{L^{p(\cdot)}(\rn)} \leq \left\|\prod_{i=1}^m Mf_i\right\|_{L^{p(\cdot)}(\rn)} 
		\leq C\prod_{i=1}^m \|Mf_i\|_{L^{p(\cdot)}(\rn)}
		\leq C\prod_{i=1}^m \|f_i\|_{L^{p_i}(\rn)}.
	\end{align*}
Combining Lemma \ref{lem6}, Lemma \ref{lem7} with Theorem \ref{thm2} yield that
	\begin{align*}
		\left\|\mathcal{T}_{\vec{b}}(\vec{f})\right\|_{M_{L^{p(\cdot)}(\mathbb{R}^n)}^{u}} 
		\leq & C \|\vec{b}\|_{\text{BMO}} 
		\|M\|_{\mathfrak{B}_{L^{p'(\cdot), u}}}^{2+l}
		\|\mathscr{M}\|_{M_{L^{p_1(\cdot)}(\mathbb{R}^n)}^{u_1} \times \cdots \times M_{L^{p_m(\cdot)}(\mathbb{R}^n)}^{u_m} \to M_{L^{p(\cdot)}(\mathbb{R}^n)}^{u}} \\
		&\times \prod_{i=1}^m \|M\|_{M_{L^{p_i(\cdot)}(\mathbb{R}^n)}^{u_i} \to M_{L^{p_i(\cdot)}(\mathbb{R}^n)}^{u_i}} 
		\|f_i\|_{M_{L^{p_i(\cdot)}(\mathbb{R}^n)}^{u_i}},
	\end{align*}
	which completes the proof.
\end{proof}
\begin{corollary}\label{cor2}
	Let $I \coloneqq  \{i_1,\dots,i_l\} = \{1,\dots,l\} \subseteq \{1,\dots,m\}$, $\frac{1}{p(x)}=\frac{1}{p_1(x)}+\cdots+\frac{1}{p_m(x)}$ with $p_i\in \mathcal{P}_{\log}(\mathbb{R}^n)$ and $1< (p_i)_- \leq (p_i)_+ <\infty$, and for $0\leq \theta<1$,
	$$u_i(x,r) = \|\chi_{B(x,r)}\|_{L^{p(\cdot)}(\mathbb{R}^n)}^\theta, \ 1\leq i\leq m.$$
	Suppose that $\vec{b} \in \mathrm{BMO}^l$ and $\mathcal{T}_{\vec{b}}$ satisfies Hypothesis \ref{hyp}. If $p\in \mathcal{P}_{\log}(\mathbb{R}^n)$ and $1<p_-\leq p_+<\infty$, then
	$$
	\left\|\mathcal{T}_{\vec{b}}(\vec{f})\right\|_{M_{L^{p(\cdot)}(\mathbb{R}^n)}^{u}} 
	\leq C \|\vec{b}\|_{\text{BMO}} \prod_{i=1}^m \|f_i\|_{M_{L^{p_i(\cdot)}(\mathbb{R}^n)}^{u_i}},
	$$
	where $u=\prod_{i=1}^m u_i$. In particular, if $\theta=0$, then $M_{L^{p(\cdot)}(\mathbb{R}^n)}^u = L^{p(\cdot)}(\mathbb{R}^n)$ and
	$$
	\left\|\mathcal{T}_{\vec{b}}(\vec{f})\right\|_{L^{p(\cdot)}(\mathbb{R}^n)} 
	\leq C \|\vec{b}\|_{\text{BMO}} \prod_{i=1}^m \|f_i\|_{L^{p_i(\cdot)}(\mathbb{R}^n)}.
	$$
\end{corollary}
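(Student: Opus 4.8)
\emph{Proof proposal for Corollary~\ref{cor2}.} The plan is to obtain the statement as a direct specialization of Theorem~\ref{thm4}. All the hypotheses of Theorem~\ref{thm4} are already assumed in Corollary~\ref{cor2} \emph{except} the two $\mathbb{W}^0$-membership conditions, so the whole argument reduces to verifying $u_i\in\mathbb{W}^0_{L^{p_i(\cdot)}(\mathbb{R}^n)}$ for $i=1,\dots,m$ and $u=\prod_{i=1}^m u_i\in\mathbb{W}^0_{L^{p(\cdot)}(\mathbb{R}^n)}$. Once this is done, Theorem~\ref{thm4} applies, and on its right-hand side the factors $\|M\|_{\mathfrak{B}_{L^{p'(\cdot)},u}}$, $\|M\|_{M_{L^{p_i(\cdot)}}^{u_i}\to M_{L^{p_i(\cdot)}}^{u_i}}$ and $\|\mathscr{M}\|_{M_{L^{p_1(\cdot)}}^{u_1}\times\cdots\to M_{L^{p(\cdot)}}^{u}}$ are finite (this is exactly what is shown inside the proof of Theorem~\ref{thm4}, using Lemmas~\ref{lem6} and~\ref{lem7} and the bounds $1<p_-\le p_+<\infty$, $1<(p_i)_-\le(p_i)_+<\infty$, which give $L^{p(\cdot)}(\mathbb{R}^n),L^{p_i(\cdot)}(\mathbb{R}^n)\in\mathbb{M}\cap\mathbb{M}'$), hence they can be absorbed into the constant $C$; the special case $\theta=0$ is then immediate, since $u_i\equiv1$ and $u\equiv1$, so $M^{u_i}_{L^{p_i(\cdot)}}=L^{p_i(\cdot)}$ and $M^u_{L^{p(\cdot)}}=L^{p(\cdot)}$.

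For the $\mathbb{W}^0$-membership I would verify the two items of Definition~\ref{def-wxalpha} with $\alpha=0$ for a weight of the shape $v=\|\chi_{B(\cdot,\cdot)}\|_X^\theta$ (with $X=L^{p_i(\cdot)}(\mathbb{R}^n)$ for the $u_i$, and $X=L^{p(\cdot)}(\mathbb{R}^n)$ for $u$, using \eqref{eq9} to pass between $\|\chi_{B}\|_{L^{p(\cdot)}}$ and $\prod_i\|\chi_{B}\|_{L^{p_i(\cdot)}}$). Item (1) is essentially free: $\|\chi_{B(x,r)}\|_X/v(x,r)=\|\chi_{B(x,r)}\|_X^{1-\theta}$, and since $1-\theta>0$ the map $t\mapsto t^{1-\theta}$ is increasing, so $v(x_1,r_1)\le v(x_2,r_2)$ already forces $\|\chi_{B(x_1,r_1)}\|_X\le\|\chi_{B(x_2,r_2)}\|_X$, hence item (1) holds with constant $1$. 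Item (2) is where the finiteness of the exponents enters: using the standard comparison $\|\chi_{B(x,r)}\|_{L^{q(\cdot)}}\simeq|B(x,r)|^{1/q_{B(x,r)}}$ recalled in the proof of Theorem~\ref{thm4} (for $q\in\{p,p_1,\dots,p_m\}$) together with $q_-\le q_{B}\le q_+<\infty$, I would prove the uniform geometric decay $\|\chi_{B(x,r)}\|_{L^{q(\cdot)}}/\|\chi_{B(x,2^{j+1}r)}\|_{L^{q(\cdot)}}\lesssim 2^{-(j+1)n/q_+}$ by a short case split according to whether $r$ and $2^{j+1}r$ are $\le1$ or $\ge1$; then the sum in item (2) is bounded by $\|\chi_{B(x,r)}\|_X^\theta$ times a convergent geometric series of ratio $2^{-n(1-\theta)/q_+}<1$, which gives item (2).

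The step I expect to be the main obstacle is precisely this uniform-in-center, all-scales decay estimate for $\|\chi_{B(x,r)}\|_{L^{q(\cdot)}}$: the harmonic-mean exponent $q_{B(x,r)}$ is only \emph{almost} constant — close to $q(x)$ on small balls and close to $q_\infty$ on large balls — so the estimate must be glued together across the threshold $r\simeq1$, which is exactly where global $\log$-Hölder continuity of $1/q$ and the bound $q_+<\infty$ are used. This is, however, by now classical in the variable-exponent setting and introduces no genuinely new difficulty; everything else — the two $\mathbb{W}^0$ checks, the invocation of Theorem~\ref{thm4}, and reading off the $\theta=0$ case — is routine.
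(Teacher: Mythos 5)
Your proposal is correct and follows essentially the same route as the paper: reduce the corollary to Theorem \ref{thm4} by checking that $u_i\in\mathbb{W}^0_{L^{p_i(\cdot)}(\mathbb{R}^n)}$ and $u\in\mathbb{W}^0_{L^{p(\cdot)}(\mathbb{R}^n)}$ (passing between $u$ and the $u_i$ via \eqref{eq9}, exactly as the paper does through \eqref{usimchi}), and then absorb the maximal-operator norms into the constant. The only substantive difference is in how item (2) of Definition \ref{def-wxalpha} is verified: the paper simply cites \cite[Corollary 3.4]{ho3} for $0<\theta<1$ and \cite[Proposition 6.5]{ho1} for $\theta=0$, whereas you prove the needed decay $\|\chi_{B(x,r)}\|_{L^{q(\cdot)}}/\|\chi_{B(x,2^{j+1}r)}\|_{L^{q(\cdot)}}\lesssim 2^{-(j+1)n/q_+}$ directly from $\|\chi_B\|_{L^{q(\cdot)}}\simeq|B|^{1/q_B}$ and global $\log$-H\"older continuity with a case split at scale $\simeq 1$; this is a sound, self-contained replacement for those citations and buys nothing more than independence from Ho's lemmas. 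One small caveat: your claim that item (1) "holds with constant $1$" uses $\theta>0$ — at $\theta=0$ the hypothesis $u(x_1,r_1)\le u(x_2,r_2)$ is vacuous and the deduction $\|\chi_{B_1}\|_X\le\|\chi_{B_2}\|_X$ is not available, so the endpoint case needs to be treated separately (the paper likewise isolates $\theta=0$, where $M^u_X=X$, and there too only the summation condition \eqref{eq-1207}-type estimate is checked); this is a presentational point matching the paper's own treatment rather than a genuine gap.
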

\begin{remark}
	We make three notes on Corollary \ref{cor2}.
	\begin{enumerate}
		\item If $I=\emptyset$, then compared with \cite[Lemma 3.3]{zha}, Corollary \ref{cor2} presents a broader conclusion beyond the Hardy-Littlewood maximal operator $\mathscr{M}$, since $\mathscr{M}$ satisfies Hypothesis \ref{hyp}.
		
		\item If $I\neq \emptyset$, then the boundedness for iterated commutators of multilinear operators is still brand-new in the Lebesgue space with variable exponent $L^{p(\cdot)}(\mathbb{R}^n)$.
		
		\item In terms of function space, Corollary \ref{cor2} generalizes Theorem 4.7 in \cite{zha} since $\mathbb{W}_X^1 \subseteq \mathbb{W}_X^0$.
	\end{enumerate}
\end{remark}

\begin{proof}[Proof of Corollary \ref{cor2}]
	The proof of this corollary can be divide into two cases.\\
	(1). For $0<\theta<1$, if $u_i(x_1,r_1) \leq u_i(x_2,r_2), i=1,\dots,m$, then
	$$\|\chi_{B(x_1,r_1)}\|_{L^{p_i(\cdot)}(\rn)} \leq \|\chi_{B(x_2,r_2)}\|_{L^{p_i(\cdot)}(\rn)}.$$
	Therefore, 
	\begin{align*}
		\frac{\|\chi_{B(x_1,r_1)}\|_{L^{p_i(\cdot)}(\mathbb{R}^n)}}{u_i(x_1,r_1)} 
		=\|\chi_{B(x_1,r_1)}\|_{L^{p_i(\cdot)}(\rn)}^{1-\theta} 
		\leq \|\chi_{B(x_2,r_2)}\|_{L^{p_i(\cdot)}(\rn)}^{1-\theta}
		= \frac{\|\chi_{B(x_2,r_2)}\|_{L^{p_i(\cdot)}(\mathbb{R}^n)}}{u_i(x_2,r_2)}.
	\end{align*}
	For any $x\in \rn, r>0$, by \cite[Corollary 3.4]{ho3}, we obtain
	\begin{align}\label{eq-1207}
		\sum_{j=0}^\infty \frac{\|\chi_{B(x,r)}\|_{L^{p_{i}(\cdot)}(\rn)}}{\|\chi_{B(x,2^{j+1}r)}\|_{L^{p_{i}(\cdot)}(\rn)}} u_i(x,2^{j+1}r) \leq Cu_i(x,r),
	\end{align}
	which yields that $u_i\in \mathbb{W}_{L^{p_i(\cdot)}(\rn)}^0$. Next we turn to prove $u\in \mathbb{W}_{L^{p(\cdot)}(\rn)}^0$. In view of \eqref{eq9},
	\begin{align}\label{usimchi}
		u(x,r) = \prod_{i=1}^m u_i(x,r)
		=\left(\prod_{i=1}^m \|\chi_{B(x,r)}\|_{L^{p_i(\cdot)}(\rn)}\right)^{\theta}
		\simeq \|\chi_{B(x,r)}\|_{L^{p(\cdot)}(\rn)}^{\theta}.
	\end{align}
	Together with the estimate \eqref{eq-1207} for $u_i$, \eqref{usimchi} then yields $u\in \mathbb{W}_{L^{p(\cdot)}(\rn)}^0$. Applying Theorem \ref{thm4}, we conclude that Corollary \ref{cor2} holds for $0 < \theta < 1$.\\
	(2). For $\theta = 0$, we have $u_i = u \equiv 1$, and the spaces $M_{L^{p_i(\cdot)}(\mathbb{R}^n)}^{u_i}$ and $M_{L^{p(\cdot)}(\mathbb{R}^n)}^{u}$ remain non-trivial. For any $1 < q < (p_+)'$, \cite[Proposition 6.5]{ho1} establishes that
	$$\sum_{j=0}^{\infty} \frac{\|\chi_{B(x,r)}\|_{L^{p(\cdot)}(\rn)}}{\|\chi_{B(x,2^{j+1}r)}\|_{L^{p(\cdot)}(\rn)}}
	\leq C\sum_{j=0}^{\infty} 2^{-jn\left(1-\frac{1}{q}\right)} 
	<\infty.$$
Combining (1) and (2), we complete the proof of Corollary \ref{cor2}.
\end{proof}

\section{Applications to other operators}\label{pr+5}
The purpose of this section is to demonstrate that our theorems can be applied to many operators. For example, this includes multilinear maximal singular integral operators and multilinear Bochner-Riesz square functions.
 
\subsection{Multilinear maximal singular integral operator}
\ \\
\indent Let $\omega:[0, \infty) \rightarrow[0, \infty)$ be a nondecreasing function, denote $d \vec{y}=d y_1 \cdots d y_m$, we consider the corresponding commutator of the multilinear maximal singular integral operator $T^{\ast}$, which is defined as
$$
T^{\ast}(\vec{f})(x):=\sup _{\delta>0}\left|\int_{\sum_{i=1}^m\left|y_i-x\right|^2>\delta^2} K\left(x, y_1, \ldots, y_m\right) f_1\left(y_1\right) \cdots f_m\left(y_m\right) d \vec{y}\right|,
$$
for $x \notin \bigcap_{j=1}^m \operatorname{supp} f_j$ and each $f_j \in L_c^{\infty}\, (j=1, \ldots, m)$, where the kernel function $K$, defined away from the diagonal $x=y_1=\cdots=y_m$ in $(\mathbb{R}^n)^{m+1}$, satisfies the following conditions:
\begin{enumerate}[(i)]
	\item (the size condition)
	\begin{align}\label{size}
		\left|K\left(x, \vec{y}\right)\right| \lesssim \frac{1}{\left( \sum_{k=1}^{m} \left|x-y_{k}\right| \right)^{m n}},
	\end{align}
	for all $x, y_1, \dots, y_m \in \mathbb{R}^n$ with $x \neq y_j$ for some $j \neq 0$,
	\item (the regularity condition)
	for some $0<\tau<1$, 
	\begin{align*}
		& \left|K\left(x, y_1, \ldots, y_m\right)-K\left(x^{\prime}, y_1,\ldots, y_m\right)\right| \\
		& \qquad \lesssim \frac{1}{\left(\left|x-y_1\right|+\cdots+\left|x-y_m\right|\right)^{m n}} \omega\left(\frac{\left|x-x^{\prime}\right|}{\left|x-y_1\right|+\cdots+\left|x-y_m\right|}\right)
	\end{align*}
	whenever $\left|x-x'\right| \leq  \tau \max_{1 \leq  k \leq  m} |x-y_k|$, and
	\begin{equation}
		\begin{aligned}\label{smooth-y}
			& \left|K\left(x, y_1, \ldots, y_i, \ldots, y_m\right)-K\left(x, y_1, \ldots, y_i^{\prime}, \ldots, y_m\right)\right| \\
			& \qquad\lesssim \frac{1}{\left(\left|x-y_1\right|+\cdots+\left|x-y_m\right|\right)^{m n}} \omega\left(\frac{|y_i-y_i^{\prime}|}{\left|x-y_1\right|+\cdots+\left|x-y_m\right|}\right)
		\end{aligned}	
	\end{equation}
	whenever $\left|y_j-y_j'\right| \leq  \tau \max_{1 \leq  k \leq  m} |x-y_k|$, where $\vec{y}'=(y_1,\dots,y_j',\dots,y_m)$, $j \in \{1,\dots,m\}$. 
\end{enumerate}
The commutator of the multilinear maximal singular integral operator is as follows,
$$
T_{\vec{b}}^{\ast}(\vec{f})(x):=\sum_{j=1}^m \sup _{\delta>0}\left|\int_{\sum_{i=1}^m\left|y_i-x\right|^2>\delta^2}\left(b_j(x)-b_j\left(y_j\right)\right) K\left(x,\vec{y}\right) f_1\left(y_1\right) \cdots f_m\left(y_m\right) d \vec{y}\right|.
$$
In 2023, Zhang \cite{zhang-c} showed that the commutator $T_{\vec{b}}^{\ast}$ can be dominated pointwise by a finite number of sparse operators, that is, $T_{\vec{b}}^{\ast}$ satisfies our Hypothesis \ref{hyp}, and $T_{\vec{b}}^{\ast}$ is bounded on product weighted Lebesgue spaces. Subsequently, the multilinear version of Coifman-Fefferman inequality, the quantitative weighted mixed weak type inequalities and the weighted modular inequalities were established in \cite{bp+tan}.

Based on \cite[Theorems 1.8]{zhang-c}, Hypothesis \ref{hyp} holds for multilinear maximal singular integral operator $T^{\ast}$, which combined with Theorems \ref{thm1} and \ref{thm2} gives the following results.
\begin{theorem}\label{thm-maximal}
	Let $T$ be an $m$-linear $\omega$-Calder\'{o}n-Zygmund operator with $\omega \in$ $\log$-Dini$(1,0).$ If $\vec{b} \in \mathrm{BMO}^m,$ then we have
	\begin{enumerate}[(a).]
		\item Let $X$ be a Banach function space and $u: \mathbb{R}^n \times (0,+\infty) \to (0,+\infty)$ be Lebesgue measurable. If $0<p<\infty$, $X' \in \mathbb{M}$ and $u\in \mathbb{W}_{X}^0$, then there exists a constant $C>0$ such that for any $\vec{f}\in \mathcal{M}^m$ and $(\mathscr{M}_{L(\log L)} (\vec{f}))^p \in M_{X}^{u}$, $|T_{\vec{b}}^{\ast}(\vec{f})|^p \in M_{X}^{u}$ and
		$$
		\left\| (T_{\vec{b}}^{\ast} (\vec{f}))^p \right\|_{M_{X}^{u}} 
		\leq  C\left(\|\vec{b}\|_{\text{BMO}}^{\ast}\right)^{p} \left\|M\right\|_{\mathfrak{B}_{X',u}}^{pl+\max\{2,p\}} \left\|\left(\mathscr{M}_{L(\log L)} (\vec{f}) \right)^p \right\|_{M_{X}^u},
		$$
		where $\|\vec{b}\|_{\text{BMO}}^{\ast} \coloneqq \max_{i\in \{1,\dots,m\}} \|b_{i}\|_{\text{BMO}}$.
		
		\item Let $0<p,p_{1},\dots,p_{m},q,q_{1},\dots,q_{m} \leq \infty$, $(L^{p,q}(\rn))' \in \mathbb{M}$, $[(L^{p_{i},q_{i}}(\rn))^{r}]'\in \mathbb{M}$, $\mathscr{M}(\vec{f})$ be bounded from $(L^{p_{1},q_{1}}(\rn))^{r} \times \cdots \times (L^{p_{m},q_{m}}(\rn))^{r}$ to $(L^{p,q}(\rn))^{r}$ where $1\leq r<\infty$. Suppose that $\|\chi_B\|_{L^{p,q}(\rn)} \leq C\prod_{i=1}^m \|\chi_B\|_{L^{p_{i},q_{i}}(\rn)}$ for any ball $B$. Then for $u=\prod_{i=1}^m u_i$ with $u_i^{\frac{1}{r}} \in \mathbb{W}_{(L^{p_{i},q_{i}}(\rn))^{r}}^0, i=1,\dots,m$, and $u^{\frac{1}{r}} \in \mathbb{W}_{(L^{p,q}(\rn))^{r}}^0$,
		\begin{itemize}
			\item when $r=1$ and $L^{p_{i},q_{i}}(\rn) \in \mathbb{M}, i=1,\dots,m$, 
			\begin{align*}
				\left\|T_{\vec{b}}^{\ast}(\vec{f})\right\|_{M_{p,q}^u} 
				&\leq C\|\vec{b}\|_{\text{BMO}}^{\ast} \|M\|_{\mathfrak{B}_{(L^{p,q}(\rn))',u}}^{2+l} \|M\|_{M_{p_{1},q_{1}}^{u_1} \times \cdots \times M_{p_{m},q_{m}}^{u_m} \to M_{p,q}^u} \\
			    &\quad \times \prod_{i=1}^m \|M\|_{M_{p_{i},q_{i}}^{u_i} \to M_{p_{i},q_{i}}^{u_i}} \|f_i\|_{M_{p_{i},q_{i}}^{u_i}};
			\end{align*}
			
			\item when $r>1$ and $(L^{p_{i},q_{i}}(\rn))' \in \mathbb{M}, i=1,\dots,m$,
			$$
			\left\| (T_{\vec{b}}^{\ast} (\vec{f}))^{r} \right\|_{M_{p,q}^{u}} \leq  C\left(\|\vec{b}\|_{\text{BMO}}^{\ast}\right)^{p} \left\|M\right\|_{M_{p_{1},q_{1}}^{u_1} \times \cdots \times M_{p_{m},q_{m}}^{u_m} \to M_{p,q}^u}^p \prod_{i=1}^m \left\| f_i^p \right\|_{M_{p_{i},q_{i}}^{u_i}}.
			$$
		\end{itemize}
	\end{enumerate}
\end{theorem}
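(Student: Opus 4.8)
The plan is to derive both parts directly from Theorems \ref{thm1} and \ref{thm2}, the only extra ingredient being a pointwise sparse bound for the commutator $T^{\ast}_{\vec{b}}$. This bound is supplied by \cite[Theorem 1.8]{zhang-c}: the size and regularity conditions \eqref{size}--\eqref{smooth-y} on $K$ (under the $\log$-Dini$(1,0)$ hypothesis on $\omega$) imply that, for every bounded compactly supported $\vec{f}$, the function $|T^{\ast}_{\vec{b}}(\vec{f})|$ is pointwise dominated by a finite sum over $k=1,\dots,m$ and over $3^{n}$ sparse families of expressions of the type appearing on the right-hand side of \eqref{hyp2} with $I=\{k\}$ a singleton. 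In other words $|T^{\ast}_{\vec{b}}(\vec{f})|\le\sum_{k=1}^{m}|S_{k}(\vec{f})|$, where each $S_{k}$ obeys the conclusion of Hypothesis \ref{hyp} with $l=1$. Everything else amounts to feeding the correct function spaces into the abstract theorems and keeping track of the BMO constant.

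For part (a), I would apply Theorem \ref{thm1} to each $S_{k}$ (its hypotheses ``$X$ a Banach function space, $X'\in\mathbb{M}$, $u\in\mathbb{W}_X^0$, $0<p<\infty$'' are exactly those assumed), which gives, whenever $(\mathscr{M}_{L(\log L)}(\vec{f}))^{p}\in M_X^u$, that $|S_{k}(\vec{f})|^{p}\in M_X^u$ together with the corresponding bound by $\|b_{k}\|_{\text{BMO}}^{p}$, a power of $\|M\|_{\mathfrak{B}_{X',u}}$, and $\|(\mathscr{M}_{L(\log L)}(\vec{f}))^{p}\|_{M_X^u}$. Since $\|\cdot\|_{M_X^u}$ is subadditive and monotone and $|T^{\ast}_{\vec{b}}(\vec{f})|^{p}\le C_{p}\sum_{k}|S_{k}(\vec{f})|^{p}$ pointwise, summing in $k$ and using $\sum_{k}\|b_{k}\|_{\text{BMO}}^{p}\le m\,(\|\vec{b}\|_{\text{BMO}}^{\ast})^{p}$ (the factors $C_{p}$ and $m$ absorbed into $C$) yields the stated inequality.

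For part (b), I would invoke Theorem \ref{thm2} with $X=L^{p,q}(\mathbb{R}^n)$, $X_i=L^{p_i,q_i}(\mathbb{R}^n)$, the exponent playing the role of ``$p$'' in that theorem taken to be $r$, and $\mathcal{T}_{\vec{b}}=T^{\ast}_{\vec{b}}$. The structural inequality $\|\chi_B\|_{L^{p,q}}\le C\prod_i\|\chi_B\|_{L^{p_i,q_i}}$ is assumed; the remaining inputs are the Lorentz-space facts recorded in Section \ref{pr+3}: $L^{p,q}(\mathbb{R}^n)$ is a normable Banach function space in the indicated ranges, its associate space is $L^{p',q'}(\mathbb{R}^n)$ (or $L^{p',\infty}(\mathbb{R}^n)$ when $q=1$) by \cite[Proposition 1.4.16]{gra}, $M$ is bounded on $L^{p',q'}(\mathbb{R}^n)$ by Lemma \ref{acc-cor2.2} with $w\equiv1$ so that $L^{p,q},L^{p_i,q_i}\in\mathbb{M}'$ (and, in the stated ranges, in $\mathbb{M}$), and $\mathscr{M}(\vec{f})\le\prod_iMf_i$ together with the Lorentz Hölder inequality (Lemma \ref{lem4}) gives $\mathscr{M}\colon L^{p_1,q_1}\times\cdots\times L^{p_m,q_m}\to L^{p,q}$, hence also the required boundedness on the $r$-th powers for $r\ge1$. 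Applying Theorem \ref{thm2}(1) when $r=1$ and Theorem \ref{thm2}(2) when $r>1$ to each $S_{k}$ and summing over $k$ produces the two displayed bounds (recalling $M_{p,q}^u=M_{L^{p,q}(\mathbb{R}^n)}^u$).

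There is no genuinely hard step here; the content is the verification that the Lorentz and Morrey--Lorentz spaces meet the structural assumptions of Theorem \ref{thm2}. The two mildly delicate points are the BMO bookkeeping — because $T^{\ast}_{\vec{b}}$ is a sum of $m$ first-order commutators rather than one iterated commutator, the product $\|\vec{b}\|_{\text{BMO}}$ is replaced by $\|\vec{b}\|_{\text{BMO}}^{\ast}=\max_i\|b_i\|_{\text{BMO}}$ up to the dimensional factor $m$ — and the endpoint exponents $p_i,q_i\in\{1,\infty\}$, where Lemma \ref{lem4} must be used in its $q=\infty$ or $p=q=\infty$ form and the identification $(L^{p,1})'=L^{p',\infty}$ invoked. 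Checking $u_i^{1/r}\in\mathbb{W}_{(L^{p_i,q_i})^{r}}^0$ and $u^{1/r}\in\mathbb{W}_{(L^{p,q})^{r}}^0$ proceeds exactly as in the proof of Theorem \ref{thm3}, using the explicit value of $\|\chi_{B(x,r)}\|_{L^{p,q}}$ from \eqref{chib-lpq}.
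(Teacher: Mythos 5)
Your proposal is correct and follows essentially the same route as the paper, which proves this theorem in one line by citing \cite[Theorem 1.8]{zhang-c} to verify Hypothesis \ref{hyp} for $T^{\ast}_{\vec b}$ and then applying Theorems \ref{thm1} and \ref{thm2} (with the Lorentz/Morrey--Lorentz verifications of Section \ref{pr+3} implicit for part (b)). Your additional bookkeeping — decomposing $T^{\ast}_{\vec b}$ into $m$ first-order commutators with singleton $I$ and tracking how this yields the constant $\|\vec b\|_{\text{BMO}}^{\ast}$ — is exactly the detail the paper leaves unstated.
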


\subsection{Multilinear Bochner-Riesz square function}
\ \\
\indent Given $\alpha\geq 0$ and $f_{1},\dots,f_{m} \in \mathscr{S}(\rn)$, the multilinear Bochner-Riesz square function of order $\alpha$ is defined by
\begin{align}\label{square}
	\mathcal{G}^{\alpha}(f_{1},\dots,f_{m})(x)
	\coloneqq \left(\int_{0}^{\infty} \left|\frac{\partial}{\partial R} \mathcal{B}_{R}^{\alpha+1}(f_{1},\dots,f_{m})(x)\right|^{2} R dR\right)^{\frac{1}{2}},
\end{align}
where $\frac{\partial}{\partial R} \mathcal{B}_{R}^{\alpha+1}(f_{1},\dots,f_{m})(x)$ makes sense for $\alpha>-1$ for each $R>0$, and $\mathcal{B}_{R}^{\alpha}(f_{1},\dots,f_{m})$ is the corresponding multilinear Bochner-Riesz mean
$$ \mathcal{B}_{R}^{\alpha}(f_{1},\dots,f_{m})(x)
\coloneqq \int_{(\rn)^{m}} \left(1-\frac{\sum_{i=1}^{m}|\xi_{i}|^{2}}{R^{2}}\right)_{+}^{\alpha} 
\left(\prod_{i=1}^{m} \hat{f}_{i}(\xi_{i})\right) e^{2\pi i x\cdot (\xi_{1}+\cdots+\xi_{m})} d\xi_{1}\cdots d\xi_{m}, \quad R>0.$$

It was known that \cite{cho}, 
	the square function $\mathcal{G}^{\alpha}$ can be rewritten up to a constant by
\begin{align*}
	\mathcal{G}^{\alpha}(f_{1},\dots,f_{m})(x) = \left(\int_{0}^{\infty} \left|\mathcal{K}_{R}^{\alpha} \ast (f_{1},\dots,f_{m})(x)\right|^{2} \frac{dR}{R}\right)^{\frac{1}{2}},
\end{align*}
where 
$$\mathcal{K}_{R}^{\alpha} \ast (f_{1},\dots,f_{m})(x)  = \int_{(\mathbb{R}^{n})^{m}} \mathcal{K}_{R}^{\alpha}(x-y_{1},\dots,x-y_{m})\prod_{i=1}^{m} f_{i}(y_{i}) dy_{i}$$
and 
\begin{align}\label{kra}
	\widehat{\mathcal{K}_{R}^{\alpha}}(\xi_{1},\dots,\xi_{m}) = 2(\alpha+1) \frac{\sum_{i=1}^{m}|\xi_{i}|^{2}}{R^{2}} \left(1-\frac{\sum_{i=1}^{m}|\xi_{i}|^{2}}{R^{2}}\right)_{+}^{\alpha}.
\end{align}

Note that when $m=1$, $\mathcal{G}^{\alpha}$ coincides with the Stein's square function, which was introduced in \cite{ste} and is of great significance in the study of maximal Fourier multiplier operators. 
When $m=2$, $\mathcal{G}^{\alpha}$ is the bilinear Bochner-Riesz square function. The necessary conditions and sufficient conditions on the exponents $p_{1}, p_{2}, p$ and $\alpha$ for the following boundedness were achieved in \cite{cho},
$$\left\|\mathcal{G}^{\alpha}(f_{1},f_{2})\right\|_{L^{p}(\rn)} \lesssim \|f_{1}\|_{L^{p}(\rn)} \|f_{2}\|_{L^{p}(\rn)}.$$

The boundedness of the bilinear Bochner-Riesz mean  $\mathcal{B}_{R}^{\alpha}(f_{1},f_{2})$ with $R=1$ (simply denoted by $\mathcal{B}^{\alpha}$) has been studied by many scholars. 
For dimension $n=1$, from \cite{gra1}, if $1<p\leq2$ and $p_{1},p_{2}\geq 2$, then 
\begin{align}\label{b-r m}
	\left\|\mathcal{B}^{\alpha}(f_{1},f_{2})\right\|_{L^{p}(\mathbb{R})} \lesssim \|f_{1}\|_{L^{p_{1}}(\mathbb{R})} \|g\|_{L^{p_{2}}(\mathbb{R})}
\end{align} 
for $\alpha=0$. However, for dimension $n\geq 2$, Diestel and Grafakos \cite{dies} showed that if exactly one of $p_{1},p_{2}$ or $p'$ is less than $2$, then \eqref{b-r m} become invalid for $\alpha=0$. 
Later on, for dimension $n=1$, \eqref{b-r m} was established by Bernicot et al. \cite{ber} when $\alpha>0$ and $1\leq p_{1},p_{2},p\leq \infty$, which was the first time to consider the case $\alpha>0$. 

For the bilinear Bochner-Riesz square function, using \cite[Theorem 7.2]{cho} along with Theorems \ref{thm1} and \ref{thm2}, we obtain the following result.
\begin{theorem}\label{thm-square}
	Let $\mathcal{G}^{\alpha}$ be the bilinear Bochner-Riesz square function defined by \eqref{square}, $\alpha>n-\frac{1}{2}$ and $\mathcal{K}$ be given by \eqref{kra}. Then the following statements are true:
	\begin{enumerate}[(a).]
		\item Let $X$ be a Banach function space and $u: \mathbb{R}^n \times (0,+\infty) \to (0,+\infty)$ be Lebesgue measurable. If $0<p<\infty$, $X' \in \mathbb{M}$ and $u\in \mathbb{W}_{X}^0$, then there exists a constant $C>0$ such that for any $\vec{f}\in \mathcal{M}^m$ and $(\mathscr{M} (\vec{f}))^p \in M_{X}^{u}$, $|\mathcal{G}^{\alpha}(\vec{f})|^p \in M_{X}^{u}$ and
		$$
		\left\| (\mathcal{G}^{\alpha} (\vec{f}))^p \right\|_{M_{X}^{u}} \leq  C \left\|M\right\|_{\mathfrak{B}_{X',u}}^{\max\{2,p\}} \left\|\left(\mathscr{M} (\vec{f}) \right)^p \right\|_{M_{X}^u}.
		$$
		
		\item Let $X, X_1$ and $X_2$ be Banach function spaces with $\|\chi_B\|_{X} \leq C\|\chi_B\|_{X_1} \|\chi_B\|_{X_2}$ for any ball $B$, and $X' \in \mathbb{M}$, $(X_i^p)'\in \mathbb{M}$, $\mathscr{M}(\vec{f})$ be bounded from $X_{1}^{p} \times X_{2}^{p}$ to $X^p$ where $1\leq p<\infty$. Then for $u= u_{1} u_{2}$ with $u_{1}^{\frac{1}{p}} \in \mathbb{W}_{X_{1}^p}^0, u_{2}^{\frac{1}{p}} \in \mathbb{W}_{X_{2}^p}^0$, and $u^{\frac{1}{p}} \in \mathbb{W}_{X^p}^0$,
		\begin{itemize}
			\item when $p=1$ and $X_{1}, X_{2} \in \mathbb{M}$, 
			$$
			\left\|\mathcal{G}^{\alpha}(\vec{f})\right\|_{M_{X}^u} \leq C \|M\|_{\mathfrak{B}_{X',u}}^{2} \|\mathscr{M}\|_{M_{X_1}^{u_1} \times M_{X_2}^{u_2} \to M_{X}^u} 
			\prod_{i=1}^2(
			\|M\|_{M_{X_i}^{u_i} \to M_{X_i}^{u_i}}
			\|f_i\|_{M_{X_i}^{u_i}});
			$$
			\item when $p>1$ and $X_{1}',X_{2}' \in \mathbb{M}$,
			$$
			\left\| (\mathcal{G}^{\alpha} (\vec{f}))^p \right\|_{M_{X}^{u}} \leq  C \left\|\mathscr{M}\right\|_{M_{X_1}^{u_1} \times M_{X_2}^{u_2} \to M_{X}^u}^p 
			\left\| f_{1}^p \right\|_{M_{X_1}^{u_1}}
			\left\| f_{2}^p \right\|_{M_{X_2}^{u_2}}.
			$$
		\end{itemize}
	\end{enumerate}
\end{theorem}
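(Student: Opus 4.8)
The plan is to obtain Theorem~\ref{thm-square} as a direct consequence of Theorems~\ref{thm1} and~\ref{thm2} applied with $\mathcal{T}=\mathcal{G}^{\alpha}$, $m=2$ and $I=\emptyset$ (so $l=0$), once the pointwise sparse domination of $\mathcal{G}^{\alpha}$ is in place. The key input is \cite[Theorem~7.2]{cho}: for $\alpha>n-\frac{1}{2}$ and all bounded, compactly supported $f_1,f_2$ there exist $3^n$ sparse families $\{\mathcal{S}_j\}_{j=1}^{3^n}$ with
\[
	\mathcal{G}^{\alpha}(f_1,f_2)(x)\leq C\sum_{j=1}^{3^n}\sum_{Q\in\mathcal{S}_j}\langle|f_1|\rangle_Q\langle|f_2|\rangle_Q\chi_Q(x),\qquad\text{a.e. }x\in\rn,
\]
which is exactly the $I=\emptyset$ instance of \eqref{hyp2}, i.e.\ $\mathcal{G}^{\alpha}$ satisfies Hypothesis~\ref{hyp}. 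Since this sparse form involves only the plain averages $\langle|f_i|\rangle_Q$ (no $L(\log L)$ averages and no BMO symbols), the attendant Coifman--Fefferman inequality holds with $\mathscr{M}$ in place of $\mathscr{M}_{L(\log L)}$, which is why the dominating function in part~(a) is $\mathscr{M}(\vec{f})$ rather than $\mathscr{M}_{L(\log L)}(\vec{f})$.

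For part~(a), I would run the proof of Theorem~\ref{thm1} with $l=0$. By \eqref{mxu<sup<mxu} it suffices to bound $\int_{\rn}|\mathcal{G}^{\alpha}(\vec{f})(x)|^p|h(x)|\,dx$ for $h\in\mathfrak{b}_{X',u}$. Since $X'\in\mathbb{M}$ and $u\in\mathbb{W}_X^0$, the operator $M$ is bounded on $\mathfrak{B}_{X',u}$, so the Rubio de Francia operator $\mathcal{R}$ attached to it obeys \eqref{h<Rh}, \eqref{Rh<h} and \eqref{rh-a1}; in particular $\mathcal{R}h\in A_1\subseteq A_\infty$ with $[\mathcal{R}h]_{A_1}\leq 2\|M\|_{\mathfrak{B}_{X',u}}$. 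Applying to the multilinear sparse operator $\mathcal{A}_{\mathcal{S}}$ the Coifman--Fefferman estimate $\int_{\rn}(\mathcal{A}_{\mathcal{S}}(\vec{f}))^p w\lesssim[w]_{A_\infty}^{\max\{2,p\}}\int_{\rn}(\mathscr{M}(\vec{f}))^p w$ for $w\in A_\infty$ (the sparse-operator, plain-average counterpart of Lemma~\ref{yinli3}, which is the $l=0$ case of that estimate with $\mathscr{M}_{L(\log L)}$ sharpened to $\mathscr{M}$) with $w=\mathcal{R}h$, followed by \eqref{h<Rh}, \eqref{holder-M-B} and \eqref{Rh<h}, yields $\int_{\rn}|\mathcal{G}^{\alpha}(\vec{f})(x)|^p|h(x)|\,dx\lesssim\|M\|_{\mathfrak{B}_{X',u}}^{\max\{2,p\}}\|(\mathscr{M}(\vec{f}))^p\|_{M_X^u}$; taking the supremum over $h$ and invoking \eqref{mxu<sup<mxu} closes part~(a) and shows $|\mathcal{G}^{\alpha}(\vec{f})|^p\in M_X^u$ whenever $(\mathscr{M}(\vec{f}))^p\in M_X^u$.

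For part~(b), I would imitate the proof of Theorem~\ref{thm2}. First, $u^{1/p}\in\mathbb{W}_{X^p}^0$ forces $u\in\mathbb{W}_X^0$ (as verified inside the proof of Theorem~\ref{thm2}), so part~(a) applies and gives $\|(\mathcal{G}^{\alpha}(\vec{f}))^p\|_{M_X^u}\lesssim\|M\|_{\mathfrak{B}_{X',u}}^{\max\{2,p\}}\|(\mathscr{M}(\vec{f}))^p\|_{M_X^u}$. Next, $\|\chi_B\|_X\leq C\|\chi_B\|_{X_1}\|\chi_B\|_{X_2}$ upgrades to $\|\chi_B\|_{X^p}\leq C\|\chi_B\|_{X_1^p}\|\chi_B\|_{X_2^p}$, and the hypotheses ($\mathscr{M}\colon X_1^p\times X_2^p\to X^p$, $u_i^{1/p}\in\mathbb{W}_{X_i^p}^0$, together with $(X_i^p)'\in\mathbb{M}$ when $p>1$ and $X_i\in\mathbb{M}$ when $p=1$) are precisely what Lemma~\ref{thm-multi-maximal-bounded-mxu} and Remark~\ref{wxi0} require to conclude that $\mathscr{M}$ is bounded from $M_{X_1^p}^{u_1^{1/p}}\times M_{X_2^p}^{u_2^{1/p}}$ to $M_{X^p}^{u^{1/p}}$. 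Combining this with the identity $\|g\|_{M_{X^p}^{u^{1/p}}}^p=\||g|^p\|_{M_X^u}$ gives $\|(\mathscr{M}(\vec{f}))^p\|_{M_X^u}\lesssim\|\mathscr{M}\|^p\prod_{i=1}^2\|f_i\|_{M_{X_i^p}^{u_i^{1/p}}}^p=\|\mathscr{M}\|^p\prod_{i=1}^2\|f_i^p\|_{M_{X_i}^{u_i}}$, which settles the case $p>1$; for $p=1$ one additionally uses $\mathscr{M}_{L(\log L)}(\vec{f})\lesssim\mathscr{M}(Mf_1,Mf_2)$ and $\|Mf_i\|_{M_{X_i}^{u_i}}\leq\|M\|_{M_{X_i}^{u_i}}\|f_i\|_{M_{X_i}^{u_i}}$ (Remark~\ref{wxi0}), which is the source of the extra factors $\|M\|_{M_{X_i}^{u_i}}$ in the $p=1$ bound.

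Since \cite[Theorem~7.2]{cho} may be cited, the genuinely delicate step --- producing the pointwise sparse bound for a Bochner--Riesz-type square function, which rests on oscillatory-kernel estimates and is responsible for the restriction $\alpha>n-\frac{1}{2}$ --- is not ours to carry out, so there is no serious analytic obstacle. The remaining work is essentially bookkeeping, and the one point requiring care is tracking the weight-function classes under the substitutions $X\mapsto X^p$, $u\mapsto u^{1/p}$ so that the Morrey maximal lemma and Theorem~\ref{thm1} genuinely apply, together with the observation that the plain-average form of the sparse bound for $\mathcal{G}^{\alpha}$ is exactly what lets $\mathscr{M}$ (rather than $\mathscr{M}_{L(\log L)}$) appear in part~(a) and yields the slightly cleaner constant stated there.
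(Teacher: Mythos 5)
Your proposal is correct and takes essentially the same route as the paper, which simply invokes \cite[Theorem 7.2]{cho} to verify Hypothesis \ref{hyp} with $I=\emptyset$ (so $l=0$) for $\mathcal{G}^{\alpha}$ and then applies Theorems \ref{thm1} and \ref{thm2}. Your additional care in sharpening $\mathscr{M}_{L(\log L)}$ to $\mathscr{M}$ via the plain-average sparse Coifman--Fefferman estimate (and thereby avoiding the $Mf_i$ step when $p>1$) is a detail the paper leaves implicit, and you handle it correctly.
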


	
\end{document}